\newtheorem{definition}{Decfinition}
\newtheorem{lemma}{Lemma}
\newtheorem{theorem}{Theorem}
\newtheorem{proposition}{Proposition}
\newtheorem{corollary}{Corollary}
\newtheorem{conjecture}{Conjecture}
\newtheorem{problem}{Problem}
\newtheorem{claim}{Claim}
\newtheorem{remark}{Remark}
\newtheorem{observation}{Observation}
\newcommand \Ceil[1]
	\left	\lceil {#1}\right \rceil
\def \setg {{\cal G}}
\def \setf {{\cal F}}
\def \setx {\mathscr {X}}
\def \sety {\mathscr {Y}}
\colorlet{darkred}{red!60!black}
\colorlet{darkblue}{blue!60!black}
\newcommand {\red} {\textcolor{red}}
\begin{document}

\title{\bf The minimum crossing number and minimum size of maximal 1-plane graphs with given connectivity\thanks{The work was supported by the National
Natural Science Foundation of China (Grant Nos. 12371346, 12371340, 12271157)}
}

\author{ Zhangdong Ouyang \\
{\footnotesize Department of Mathematics, Hunan First Normal University , Changsha 410205, P.R.China} \\
{\footnotesize E-mail:  oymath@163.com }\\
Yuanqiu Huang\thanks{Corresponding author.} \\
{\footnotesize  Department of Mathematics, Hunan Normal University, Changsha 410081, P.R.China} \\
{\footnotesize E-mail:  hyqq@hunnu.edu.cn}\\
Licheng Zhang\\
{\footnotesize School of Mathematics, Hunan University, Changsha 410082, P.R.China} \\
{\footnotesize E-mail: lczhangmath@163.com }\\
Fengming Dong\\
{\footnotesize National Institute of Education, Nanyang Technological University, 
Singapore} \\
{\footnotesize E-mail: fengming.dong@nie.edu.sg}\\
}

\date{}
\maketitle

\begin{abstract}
A $1$-planar graph is a graph which has a drawing on the plane
such that each edge is crossed at most once. 
If a $1$-planar graph is drawn in 
	that way,  the drawing is called a {\it $1$-plane graph}. 
A graph is maximal 1-plane (or 1-planar) if no additional edge can be added without violating 1-planarity or simplicity. It is known that any maximal 1-plane  graph is $k$-connected for some $k$ with $2\le k\le 7$. 
Recently, Huang et al. proved that any maximal 1-plane graph with 
$n$ ($\ge 5$)
vertices has at least $\Ceil{\frac{7}{3}n}-3$ edges, 
which is tight for all integers $n\ge 5$.  
In this paper, 
we study $k$-connected 
maximal 1-plane graphs for 
each $k$ with $3\le k\le 7$, 
and establish a lower bound for 
their crossing numbers
and a lower bound for their edge numbers, respectively.
\end{abstract}

\textbf{AMS classification}: 05C10, 05C62\\
\indent{\bf Keywords}: 1-planar graph, 1-plane graph, drawing, crossing number.


\section{Introduction}\label{sec:introduction}

All graphs considered here are simple, finite,  and undirected unless otherwise stated. 
All terminology not defined here is referred to \cite{JAB}. 
For any graph $G$, let $V(G)$ and $E(G)$ denote its vertex set and edge set, respectively, and 
the {\it order}
and {\it size} of $G$
are defined to be $|V(G)|$ and $|E(G)|$,
respectively. 
For any $v\in V(G)$, 
let $N_G(v)$ denote the set of neighbors of $v$ in $G$,
and let $deg_G(v)=|N_G(v)|$. 
A {\it drawing} of a graph $G$ is a mapping $D$ that assigns to each vertex in $V(G)$ a distinct point in the plane and to each edge $uv$ in $E(G)$ a continuous arc connecting $D(u)$ and $D(v)$. We often make no distinction between a graph-theoretical object (such as a vertex, or an edge) and its drawing.
All drawings considered here are {\em good} unless otherwise specified, meaning that no edge crosses itself, no two edges cross each other more than once, and no two edges incident with the same vertex cross each other.  We denote by $cr_D(G)$
the number of crossings in the drawing $D$ of a graph $G$. The {\it
crossing number} $cr(G)$ of a graph $G$ is defined as the minimum number of
crossings in any drawing of $G$, and the corresponding drawing is
called an {\it optimal} drawing. It is evident that an optimal
drawing is always a {\it good} drawing, which means that no edge crosses
itself, no two edges cross more than once, and no two edges incident
with the same vertex cross each other. Unless otherwise specified, all drawings considered in this paper are assumed to be good. For further information on the crossing number of graphs,  we refer to \cite{MS}.

A drawing $D$ of a graph 
is $1$-{\it planar} if each edge in $D$ is crossed at most once. 
If a graph has a $1$-planar drawing, then it is called 1-{\it planar}. A graph together with a $1$-planar drawing is called a {\it $1$-plane graph}. To avoid confusion, in this paper, we use $cr_{\times}(G)$ to denote  the number of crossings in the corresponding 1-planar drawing of the 1-plane graph $G$.

The notion of 1-planar graphs was first introduced in 1965  by Ringel \cite{GR} in connection with the problem of simultaneous coloring of the vertices and faces of plane graphs. Since then many properties of $1$-planar graphs have been studied (e.g. see the survey paper \cite{SK}). It is known that any $1$-planar graph with $n$ vertices has at most $4n-8$ edges \cite{IF,JP}, and this bound is tight for $n=8$ and $n\ge 10$. A $1$-planar graph with $n$ vertices and $4n-8$ edges is called {\it optimal}. A $1$-planar graph  is {\it maximal} if adding any edge to it yields a graph which is not $1$-planar or not simple. A $1$-planar drawing is {\em maximal} if no further edge can be added to it such that the resulting drawing is still $1$-planar. Clearly, a graph $G$ is maximal $1$-planar if and only if every $1$-planar drawing of $G$ is maximal. A maximal 1-plane graph $G$ is called {\it immovable} if $cr_{\times}(G)\le cr_{\times}(G')$ holds for any 1-plane graph $G'$
which is obtained from $G$ by redrawing exactly one edge of $G$. 
For any integer $k$ with $3\le k\le 7$, 
let $\setg_k$ denote the set of $k$-connected maximal 1-plane graphs $G$,  where $G$ is required to be immovable if $k=3$.

In this article, we will 
establish a sharp lower bound of $cr(G)$ 
and a sharp low bound of $|E(G)|$ 
over all graphs $G\in \setg_k$, where $3\le k\le 7$.
In the remainder of this section, 
we first introduce some 
known results 
on $cr(G)$  and $E(G)$
for maximum $1$-plane graphs $G$,
and then present our main results
in this article.

\subsection{Known results}

The crossing number is a crucial parameter for assessing the 1-planarity of a graph or verifying the maximality of a 1-planar graph. It is well-known that any 1-plane graph with $n$ vertices admits at most $n-2$ crossings~\cite{JCD,YS}. For maximal 1-planar graphs, a tighter upper bound has been established~\cite{OHD}: if $G$  is a maximal 1-planar graph with $n$ vertices, then $cr(G) \le n-2 - (2\lambda_1 + 2\lambda_2 + \lambda_3)/6$, 
where $\lambda_1$ and $\lambda_2 $ denote the number of 2-degree and 4-degree vertices, respectively, and $\lambda_3$  counts the odd-degree vertices $w$ such that 
either $d_G(w) \le 9$ or $G-w$ is 2-connected.
In the case of optimal 1-planar graphs, the crossing number attains the maximum value $cr(G) = n-2$~\cite{OGC}.  Despite these upper bounds, lower bounds on the crossing numbers of 1-plane graphs remain largely unexplored. In this work, we investigate the lower bounds of crossing numbers for maximal 1-plane graphs, and obtain the following Theorem~\ref{main1}.

It is well-known that every maximal planar graph with $n$ vertices has exactly $3n-6$ edges.  
However,  this property 
cannot be extended to 1-planar graphs.
Surprisingly, there exist maximal 1-planar graphs that are even sparser than maximal planar graphs with the same order (see \cite{JB,FJ,PE,DTY1}).  This raises a question: how sparse can they possibly be?   This extremal problem for the minimum size of $1$-plane (or 1-planar)  graphs  has attracted much interest  (see \cite{DWGL} and the references therein for the definitions and numerous results).

Brandenburg et al. \cite{FJ} were the first to construct a class of maximal
1-plane graphs with $n$ vertices having only $\frac{7}{3}n+O(1)$ edges, and obtained the
following result.
\begin{theorem}[\cite{FJ}]\label{TA}
For any maximal 1-plane graph $G$ with $n\ge 4$ vertices, $|E(G)|\ge \frac{21}{10}n-\frac{10}{3}$. 
\end{theorem}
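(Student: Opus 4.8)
The plan is to fix a maximal $1$-planar drawing $D$ of $G$ and extract everything from its planarization. Let $c$ be the number of crossings and $m=|E(G)|$. Replacing each crossing by a dummy vertex of degree $4$ turns $D$ into a plane graph $G^\times$ with $n+c$ vertices and $m+2c$ edges, so Euler's formula gives $F^\times=m-n+c+2$ faces and $\sum_\phi|\phi|=2(m+2c)$. The whole argument rests on these two identities once the faces of $G^\times$ are understood, so the first task is a structural description of them.

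First I would establish the \emph{kite lemma}: in a maximal $1$-planar drawing, if $ac$ and $bd$ cross then the edges $ab,bc,cd,da$ are all present and uncrossed, so the crossing is framed by a $4$-cycle. The point is that each of the four triangular sectors at the crossing is empty, a missing side could be inserted planarly, and if a side cannot be inserted it is only because it already exists and can be rerouted to frame the crossing. Consequently every face of $G^\times$ incident with a dummy vertex is one of the four triangles of its kite, giving exactly $4c$ \emph{kite triangles}, while every remaining face is a \emph{true face} bounded entirely by uncrossed edges of $G$. Writing $p_i$ for the number of true faces of size $i$, the size sum of the true faces is $2m-8c$ and their number is $m-n-3c+2$; subtracting $3$ times the latter from the former collapses the two Euler identities into the single relation
\[
\sum_i (i-3)\,p_i = 3n + c - 6 - m,
\]
equivalently $m = 3n + c - 6 - \sum_i (i-3)p_i$. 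Thus a small edge count would force either few crossings or many large true faces, and the crux is to show these cannot coexist.

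The decisive structural input, and the step I expect to be the main obstacle, is controlling $\sum_i (i-3)p_i$ through maximality. Since a true face is bounded by a simple cycle with empty interior, any chord can be drawn inside it without a new crossing; maximality therefore forces every diagonal of a true face of size $i$ to be already an edge of $G$ (necessarily routed outside that face), contributing $\tfrac{i(i-3)}{2}$ forced edges. The plan is to charge each large true face against the forced edges it creates and thereby bound $\sum_i (i-3)p_i$ from above by a linear expression in $n$ and $m$; substituting this into $m=3n+c-6-\sum_i(i-3)p_i$ and using $c\ge 0$ then isolates $m$ and yields a bound of the form $\alpha n-\beta$. Concretely I would run the discharging on $G^\times$ with charges $\deg_G(v)-4$ on real vertices and $|\phi|-4$ on faces, whose total is $-8$, so that the deficit sits on kite triangles and low-degree vertices while each large true face carries a surplus to be redistributed to its forced diagonals.

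The hard part is the accounting. Forced diagonals of different faces may coincide and may themselves be crossed, so the charging must be arranged to avoid crediting one edge to several faces; a deliberately conservative scheme is exactly what produces the constant $\tfrac{21}{10}$ rather than the optimal $\tfrac{7}{3}$. In parallel I must dispose of the degenerate configurations that the clean face-classification silently assumes: a planar skeleton that is disconnected or not $2$-connected, true faces whose boundary is not a simple cycle, and vertices of degree $2$ or $3$, each handled by a short local argument before the global count applies. Finally the small orders are checked directly, which is also where the additive constant $\tfrac{10}{3}$ in $|E(G)|\ge \tfrac{21}{10}n-\tfrac{10}{3}$ is pinned down.
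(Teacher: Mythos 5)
The paper does not prove this statement at all: it is quoted as a known theorem of Brandenburg et al.\ (reference [FJ]), so your attempt can only be judged on its own merits, and it has two genuine gaps.

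First, your ``kite lemma'' is false as stated. What maximality actually gives (Lemma~\ref{K_4} in this paper) is that if $ac$ and $bd$ cross then $G[\{a,b,c,d\}]\cong K_4$, i.e.\ the four side edges \emph{exist somewhere in the graph}; it does not give that they are uncrossed, nor that they are routed so as to bound the four sectors of the crossing. Your rerouting argument does not rescue this: rerouting a side edge changes the drawing globally and can destroy 1-planarity or maximality elsewhere, and in a fixed maximal 1-plane drawing a crossing genuinely need not be framed by an empty quadrilateral. Consequently your face classification --- ``every face of $G^\times$ incident with a dummy vertex is one of the four triangles of its kite, giving exactly $4c$ kite triangles'' --- is wrong; faces at dummy vertices can be large and can contain several dummy vertices. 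This is not a technicality: if every crossing were kited, the straightforward count you set up would already yield a bound around $\frac{7}{3}n$, and the entire reason the literature had to settle for $\frac{21}{10}n$ (and later $\frac{20}{9}n$, and only recently $\frac{7}{3}n$) is precisely that unkited crossings exist and must be handled.

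Second, even granting your setup, the step that actually produces the constant $\frac{21}{10}$ --- bounding $\sum_i (i-3)p_i$ by a discharging argument --- is only announced, not carried out. You yourself flag it as ``the main obstacle'' and ``the hard part,'' and you give no charging rules, no verification that charges cannot be double-counted when forced diagonals of different faces coincide, and no treatment of the degenerate cases you list (disconnected planar skeleton, non-simple face boundaries, low-degree vertices). A proof in which the step that determines the claimed numerical bound is left as a plan is not a proof of the statement.
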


This bound was later improved by  Barát and Tóth \cite{JB},  who derived a tighter lower bound.
\begin{theorem}[\cite{JB}]\label{TB}
For any maximal 1-plane graph $G$ with $n\ge 4$ vertices, $|E(G)|\ge \frac{20}{9}n-\frac{10}{3}$. 
\end{theorem}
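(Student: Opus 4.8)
The plan is to work inside a fixed maximal $1$-planar drawing $D$ of $G$ and pass to its planarization. Write $m=|E(G)|$ and $c=cr_\times(G)$, and split $E(G)$ into the planar edges and the $2c$ crossed edges. Replacing each crossing of $D$ by a new degree-$4$ ``dummy'' vertex turns $D$ into a plane graph $H$ in which every crossed edge is cut into two arcs, so $H$ has $n+c$ vertices and $m+2c$ edges, and hence $m+c-n+2$ faces by Euler's formula; since every maximal $1$-plane graph is $2$-connected, each face of $H$ is bounded by a cycle. Writing Euler's relation with the charges $\deg_H(z)-4$ on vertices and $|f|-4$ on faces, whose total is $-8$, and noting that dummy vertices contribute $0$, one gets $\sum_{v}(\deg_G(v)-4)=-8-\sum_f(|f|-4)$, the vertex sum ranging over the real (non-dummy) vertices. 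The desired bound $m\ge\tfrac{20}{9}n-\tfrac{10}{3}$ is then exactly equivalent to $\sum_f(4-|f|)\ge\tfrac49 n+\tfrac43$, i.e.\ to the assertion that $H$ has, in the weighted sense $\sum_f(4-|f|)$, enough triangular faces; so the whole problem becomes one of producing triangular faces.

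The engine is maximality. Its decisive consequence is that two real vertices on the boundary of one face of $H$ must be adjacent in $G$: otherwise the missing edge could be drawn inside that face without meeting any other edge, contradicting maximality. Thus the real corners of every face form a clique, which severely constrains large faces. Reading this through the crossings gives a clean dichotomy: at a crossing of $ac$ with $bd$, if all four ``kite'' edges $ab,bc,cd,da$ are present then the crossing is surrounded by four triangular faces, whereas a missing kite edge forces a non-triangular face there, and the clique property then limits what such a face may contain. I would also record that maximality leaves a real vertex of degree $2$ or $3$ only inside a short list of rigid local configurations. The content of the theorem is that unsaturated kites and low-degree vertices, the only sources of non-triangular faces, cannot be so numerous as to drive $\sum_f(4-|f|)$ below $\tfrac49 n+\tfrac43$.

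To turn this into a proof I would run a discharging argument on $H$ establishing the equivalent inequality $\sum_f(4-|f|)\ge\tfrac49 n+\tfrac43$. The surplus in this sum is supplied by triangular faces, which the kite dichotomy identifies as exactly the faces around saturated crossings, while the non-triangular faces coming from unsaturated kites and the high-degree real vertices create the deficits. Starting from the charges $\deg_H(z)-4$ on vertices and $|f|-4$ on faces, I would design rules that, guided by the clique lemma and the kite dichotomy, absorb the deficit of each non-triangular face and each degree-$2,3$ vertex using the surplus of nearby saturated (triangular) structure. Calibrating the amounts so that no object is left overdrawn beyond its permitted share forces the global surplus up to $\tfrac49 n+\tfrac43$, which pins the coefficient at $\tfrac{20}{9}$, and tracking the finitely many exceptional objects yields the additive constant.

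The main obstacle, I expect, is the local casework underlying these rules: one must enumerate every configuration around a crossing and around a degree-$2$ or degree-$3$ vertex that maximality permits, and verify in each case that the discharging leaves no vertex or face below quota. The delicate cases are the near-extremal gadgets, where charge is conserved almost exactly; these are precisely the local pieces from which the sparsest maximal $1$-plane graphs are assembled, and it is how much charge they are allowed to keep that fixes the value $\tfrac{20}{9}$. Finally, since the additive term $-\tfrac{10}{3}$ is sensitive to the unbounded face and to small configurations, I would settle the values of $n$ near $4$ and the treatment of the outer face directly rather than through the asymptotic count.
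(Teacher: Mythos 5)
First, a point of reference: the paper does not prove this statement at all --- Theorem~\ref{TB} is quoted from Bar\'at and T\'oth \cite{JB} as a known result, so there is no internal proof to compare against.

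Your setup is sound as far as it goes. The planarization bookkeeping is correct: $H$ has $n+c$ vertices and $m+2c$ edges, the total charge $\sum_z(\deg_H(z)-4)+\sum_f(|f|-4)$ equals $-8$, dummy vertices carry charge $0$, and the target bound $m\ge\tfrac{20}{9}n-\tfrac{10}{3}$ is indeed equivalent to $\sum_f(4-|f|)\ge\tfrac{4}{9}n+\tfrac{4}{3}$. The two structural inputs you name --- that true vertices on a common face must be pairwise adjacent (Lemma~\ref{adjacent} here), and the kite dichotomy at a crossing (Lemma~\ref{K_4}) --- are exactly the right tools and are in fact the engine of the Bar\'at--T\'oth argument.

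However, what you have written is a plan for a proof, not a proof. Everything after the reduction is stated in the conditional: you ``would design rules'' that ``absorb the deficit,'' and ``calibrating the amounts \ldots forces the global surplus up to $\tfrac{4}{9}n+\tfrac{4}{3}$.'' But the entire content of the theorem lives precisely in those rules and in the verification that no configuration is overcharged; the constant $\tfrac{20}{9}$ is not pinned down by the framework but by the worst local gadget that survives the case analysis, and you never exhibit the rules, the gadgets, or the final-charge check. (Indeed, the same framework with no further work proves nothing beyond $m\ge\tfrac{21}{10}n-O(1)$ or even weaker bounds, and with more careful casework it proves the stronger $\tfrac{7}{3}n-3$ of Theorem~\ref{TC}; so the specific value $\tfrac{20}{9}$ cannot emerge without the omitted analysis.) Two smaller issues: your claim that $2$-connectivity of $G$ makes every face of $H$ a cycle needs an argument that the planarization $H$ itself is $2$-connected, and the ``short list of rigid local configurations'' around degree-$2$ and degree-$3$ vertices is asserted rather than derived. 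As it stands the argument has a genuine gap at its quantitative core.
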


In \cite{HOZD},  we settled the optimal lower bound, proving the following tight result.
\begin{theorem}[\cite{HOZD}]\label{TC}
For any maximal 1-plane graph $G$ with $n\ge 5$ vertices, $|E(G)|\ge \frac{7}{3}n-3$,  and 
for every integer $n\ge 5$, 
there exists a maximal 1-plane graph $G$ such that $|E(G)|= \frac{7}{3}n-3$.
\end{theorem}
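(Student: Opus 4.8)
The plan is to reduce the statement to a single inequality about the planarization of a maximal $1$-plane drawing, to establish that inequality by a discharging argument driven by maximality, and finally to match it with an explicit construction. Fix a maximal $1$-plane drawing $D$ of $G$ and let $\tilde G$ be its planarization, the plane graph obtained by replacing each of the $c$ crossings of $D$ by a new vertex of degree $4$. Since $G$ is $2$-connected (as every maximal $1$-plane graph is), the plane graph $\tilde G$ is connected, with $n+c$ vertices and $|E(G)|+2c$ edges, so Euler's formula yields $|E(G)|=F+n-c-2$, where $F$ is the number of faces of $\tilde G$. A direct rearrangement shows that the target $|E(G)|\ge \frac{7}{3}n-3$ (more precisely $|E(G)|\ge\Ceil{\frac{7}{3}n}-3$, the form forced by integrality) is \emph{equivalent} to the clean bound $3F\ge 4n+3c-3$. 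Thus everything comes down to showing that the faces of $\tilde G$ cannot be both few and large while the crossings are many; this is exactly the tension that makes sparse maximal $1$-plane graphs possible but bounded.

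The decisive structural input, which I would isolate as a lemma, is that maximality controls the faces of $\tilde G$: any two distinct real (non-crossing) vertices on the boundary of a common face of $\tilde G$ must be adjacent in $G$. Otherwise the missing edge could be drawn inside that face with no crossing, contradicting maximality of $D$. A companion statement, obtained by allowing the new arc to cross a single uncrossed edge on a face boundary, forbids further configurations and forces each crossing to be locally ``framed.'' Together these yield: the real vertices on any face form a clique of $G$; a face can be large only if most of its boundary consists of crossing vertices; and each crossing is either a \emph{kite} (its four endpoints span a $4$-cycle enclosing the crossing) or one of a short list of \emph{non-kite} types. The non-kite crossings, together with the degree-$2$ and degree-$4$ real vertices that they create, are precisely the mechanism producing sparsity, and the quantitative content of the theorem is that they cannot accumulate.

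I would then prove $3F\ge 4n+3c-3$ by discharging on $\tilde G$, assigning each face its excess $|f|-3\ge 0$ and each real vertex a deficiency reflecting how far its degree falls below the threshold $\tfrac{14}{3}$ needed for the average-degree bound. The maximality lemma supplies the redistribution: every deficient real vertex is surrounded by triangular faces or by cliques that can cover its deficit, and every large face pays for the crossing vertices on its boundary. I expect the main obstacle to lie in the local analysis around degree-$2$ and degree-$4$ real vertices that are incident to non-kite crossings---these are exactly the configurations that dragged the earlier constants $\frac{21}{10}$ of \cite{FJ} and $\frac{20}{9}$ of \cite{JB} below $\frac{7}{3}$. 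One must enumerate the admissible local pictures, show that two such weak vertices cannot be too close, and verify that each admissible cluster already forces enough surrounding edges; getting the discharging rules to discharge \emph{exactly} enough is where the sharp constant $\frac{7}{3}$ is won or lost, and I would treat this case analysis as the technical core.

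For sharpness I would exhibit, for every $n\ge 5$, a maximal $1$-plane graph attaining the bound. The natural route is modular: start from a small maximal $1$-plane seed and repeatedly attach a gadget on three new vertices contributing seven new edges and one crossing, arranged so that every resulting face is either a triangle or has its real boundary vertices already mutually adjacent; this keeps the size at exactly $\frac{7}{3}$ per added vertex and realises equality in every inequality used above. The three residue classes of $n$ modulo $3$ are handled by three slightly different seeds. The delicate point in this half is not the edge count, which is immediate, but the verification of maximality: one must certify, face by face, that no edge can be inserted with zero or one crossing---precisely the converse of the maximality lemma used in the lower bound.
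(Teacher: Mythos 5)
This statement is not proved in the present paper at all: Theorem~\ref{TC} is imported verbatim from \cite{HOZD}, so there is no in-paper argument to compare yours against. Judged on its own terms, your opening reduction is correct and consistent with the tools the paper does record: with $\tilde G=G^\times$ one has $|V(\tilde G)|=n+c$, $|E(\tilde G)|=|E(G)|+2c$, hence $|E(G)|=F+n-c-2$ by Euler's formula, and the target bound is indeed equivalent to $3F\ge 4n+3c-3$; the two structural facts you invoke (real vertices on a common face form a clique, and every crossing spans a $K_4$) are exactly Lemmas~\ref{adjacent} and~\ref{K_4} here, quoted from \cite{JB,FJ}. Your remark that the equality claim should be read with $\Ceil{\frac{7}{3}n}-3$ is also fair, since $\frac{7}{3}n-3$ is an integer only when $3\mid n$.

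However, there is a genuine gap: everything after the reduction is a plan, not a proof. The inequality $3F\ge 4n+3c-3$ is precisely where the content of the theorem lives --- the same setup with weaker local analysis yields only the constants $\frac{21}{10}$ of \cite{FJ} and $\frac{20}{9}$ of \cite{JB} --- and you never specify the charges, the discharging rules, or the enumeration of local configurations around the degree-$2$ and degree-$4$ vertices created by non-kite crossings; you explicitly defer this as ``the technical core.'' Nothing in what you wrote rules out a hypothetical maximal $1$-plane graph violating the bound, so the lower-bound half is not established. The sharpness half has the same status: the three-vertex, seven-edge, one-crossing gadget is described only abstractly, no seed graphs are exhibited for the three residue classes of $n$ modulo $3$, and the maximality verification (that no further edge can be drawn with at most one crossing into any face of the resulting drawing) is acknowledged but not performed. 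To turn this into a proof you would need to supply the full discharging case analysis and an explicit, checked family of extremal drawings, i.e., essentially the contents of \cite{HOZD}.
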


\subsection{Main results}

Our first main result is on the lower bounds of $cr(G)$ for $G\in \setg_k$,
where $3\le k\le 7$.

\begin{theorem}\label{main1}
	Let $G\in \mathcal{G}_k$ and $n=|V(G)|$,
		where $3\le k\le 7$.
	Then
	\begin{equation*}
		cr(G)\ \ge\  \left\{
		\begin{aligned}
			&\frac{n-2}{3},  \qquad &if\, k&=3  \,\,\text{and}\,\, n\ge 5, \\
			&\frac{n-2}{2},  &if\, k&=4  \,\,\text{and}\,\, n\ge 6, \\
			&\frac{3n-6}{5}, &if\, k&=5,6,\\
			&\frac{3n}{4}, &if\, k&=7.
		\end{aligned}
		\right.
	\end{equation*}
	Furthermore, when $k\in \{3,4,6\}$, the above lower bounds
	are tight for infinitely many integers $n$, and when $k=7$, 
	the lower bound is tight for $n\in \{24, 56\}$.
\end{theorem}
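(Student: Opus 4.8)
The plan is to derive each lower bound from a single structural mechanism: in any 1-plane drawing, crossings can be ``charged'' against edges and vertices in a way that is controlled by the connectivity $k$. The key object is the \emph{planar skeleton} $G^{\times}$ obtained from a 1-plane drawing of $G$ by replacing each crossing with a dummy degree-4 vertex; this is a genuine plane graph on $n + cr_\times(G)$ vertices. Writing $c = cr_\times(G)$ for the number of crossings in a fixed drawing, Euler's formula applied to $G^{\times}$ gives a relation among $n$, $c$, and the number of edges and faces, which I would combine with a face-counting argument that exploits maximality. Since $G$ is maximal 1-plane, no face of the drawing can admit an additional edge without creating a second crossing on some edge; this forces small faces, and in particular bounds the number of ``crossing-free'' faces from below in terms of $c$. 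The first step is therefore to set up this skeleton, record Euler's formula, and translate maximality into a local constraint on faces incident to crossings versus planar faces.

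The second step is to relate $c$ to $n$ using the size bounds already available. From Theorem~\ref{TC} we have $|E(G)| \ge \frac{7}{3}n - 3$, and from the classical upper bound $|E(G)| \le 4n - 8$ together with the well-known fact that a 1-plane graph with $e$ edges and $c$ crossings satisfies $e \le 3n - 6 + c$ (each crossing ``saves'' the planarity deficit of at most one edge, applied to the planarized graph), one obtains $c \ge |E(G)| - (3n-6) \ge \frac{7}{3}n - 3 - 3n + 6 = -\frac{2}{3}n + 3$, which is too weak. So the size bound alone does not suffice, and the real leverage must come from connectivity. The plan is to show that higher connectivity forces more edges to be crossed: in a $k$-connected maximal 1-plane graph, the planar part (the edges not involved in any crossing) is itself sufficiently connected and sufficiently dense that the planar skeleton cannot have too many large faces. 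Concretely I would bound the number of uncrossed edges from above in terms of $n$, so that the remaining edges — which come in crossing pairs — are numerous, yielding $c \ge $ (linear in $n$) with the coefficient sharpening as $k$ grows from $3$ to $7$.

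For the four separate bounds I would run a weighted discharging argument on $G^{\times}$, assigning an initial charge to each vertex and face via Euler's formula, and redistributing charge from crossing-vertices and their incident faces. The connectivity hypothesis enters through the minimum degree: a $k$-connected simple graph has $\delta(G) \ge k$, so for $k \in \{5,6,7\}$ every vertex has degree at least $5$, which sharply limits how a vertex can sit in a locally planar region (a vertex of degree $\ge 5$ incident only to uncrossed edges would force a too-dense planar neighborhood). The distinct coefficients $\frac{1}{3}, \frac{1}{2}, \frac{3}{5}, \frac{3}{4}$ should emerge as the extremal ratios in the discharging for $k = 3,4, \{5,6\}, 7$ respectively; the immovability hypothesis in the case $k=3$ is what rules out redrawings that would otherwise lower the crossing count below $\frac{n-2}{3}$, so I would use it precisely to forbid a local reconfiguration that uncrosses an edge. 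For tightness, I would exhibit explicit infinite families achieving equality when $k \in \{3,4,6\}$ and two specific optimal $1$-planar graphs for $k=7$ (presumably derived from highly symmetric optimal $1$-planar constructions on $24$ and $56$ vertices, where $7$-regularity and $7$-connectivity coincide with the crossing count $\frac{3n}{4}$).

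The hard part will be the $k=7$ case and establishing tightness there. For $k \le 6$ the discharging has enough slack that infinite families can be built by gluing standard gadgets, but $7$-connectivity is extremely restrictive for $1$-planar graphs — these are essentially forced to be optimal $1$-planar (meeting $|E| = 4n-8$), and such graphs exist only for sparse values of $n$. Proving the bound $cr(G) \ge \frac{3n}{4}$ will require showing that $7$-connectivity forces a near-maximal crossing structure, and the tightness claim reduces to verifying that the two exceptional orders $n \in \{24,56\}$ admit $7$-connected optimal $1$-planar graphs meeting the bound with equality, which I expect to handle by explicit construction rather than a general argument. A secondary obstacle is making the discharging uniform across $k=5$ and $k=6$, since they share the coefficient $\frac{3}{5}$ but differ in minimum degree; I anticipate the $k=5$ analysis subsumes $k=6$ up to the tightness direction, where only $k=6$ (not $k=5$) is claimed to be sharp.
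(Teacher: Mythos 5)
Your plan correctly identifies the planarization $G^{\times}$ and Euler's formula as the starting point, and correctly guesses that minimum degree is the lever for $k\ge 5$ and that the $k=5$ analysis should subsume $k=6$. But as it stands it has three genuine gaps. First, everything in the paper's argument rests on the fact that $G^{\times}$ is a \emph{triangulation}; this is not automatic from maximality and is exactly where the immovability hypothesis enters for $k=3$ (Lemma~\ref{lemtr-3}) and where $4$-connectivity enters for $k\ge 4$ (Lemma~\ref{lemtr-k}). Your guess that immovability is used to ``forbid a local reconfiguration that uncrosses an edge'' misplaces its role: it is used only to force the triangulation property, after which the crossing count is read off combinatorially. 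Second, your discharging scheme is never specified, and the quantity you propose to control (the number of uncrossed edges) is not the one that produces the coefficients $\frac{k-2}{k}$. The actual mechanism is a local adjacency lemma: in $G^{\times}$ every crossing-free (``true'') triangular face is adjacent to at most $5-k$ other true faces (Lemma~\ref{trface}, proved via the maximality property that two non-adjacent vertices on two faces sharing a non-crossing edge could be joined by a new edge). Passing to the dual of $G_P$, which is $3$-regular with $2n-4$ vertices, this gives $(k-2)\lvert V_{bl}\rvert\le 2\lvert V_{rd}\rvert$ and hence $cr(G)=\tfrac12\lvert V_{rd}\rvert\ge\frac{(k-2)(n-2)}{k}$. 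Without this lemma (or an equivalent local constraint) your discharging has no rules to run on.

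Third, your route to the $k=7$ bound is based on a false premise: $7$-connected maximal $1$-plane graphs are \emph{not} forced to be optimal $1$-planar. The tight examples on $24$ and $56$ vertices have $3n-6+\tfrac{3n}{4}=\tfrac{15n}{4}-6$ edges ($84$ and $204$ respectively), strictly below $4n-8$. The correct argument is a vertex--crossing double count: since $G^{\times}$ is a triangulation with no two adjacent true faces, at each vertex there are no two consecutive crossing edges and no three consecutive non-crossing edges, so every vertex of degree $d$ is incident with at least $\lceil d/3\rceil\ge 3$ crossing edges (Lemma~\ref{trcredge}); each crossing involves four vertices, whence $3n\le 4\,cr_{\times}(G)$. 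Your tightness discussion is directionally right (explicit families for $k\in\{3,4,6\}$, two sporadic examples for $k=7$), but the constructions still need to be exhibited and verified to be maximal, $k$-connected, and to meet the bound; none of that is sketched.
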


Our second main result is 
on the lower bound of $|E(G)|$
for $G\in \setg_k$,
where $3\le k\le 7$.
Note that the extremal graphs in Theorem \ref{TC} (see \cite{HOZD})
have connectivity $2$ when $n>5$. 
Intuitively, as the connectivity increases, the size of maximal 1-plane graphs will also increase. Any 1-plane  graph is $k$-connected
for some integer $k$ with $2\le k\le 7$
(see \cite{IF,YS}).  In this paper, we establish the lower bounds for the size of maximal 1-plane graphs with given connectivity, as stated below.

\begin{theorem}\label{main2}
	Let $G\in \mathcal{G}_k$
	and $n=|V(G)|$, where $3\le k\le 7$.
	Then
	\begin{equation*}
		|E(G)|\ge\left\{
		\begin{aligned}
			&\frac{10}{3}(n-2),  &if\, k&=3  \,\,\text{and}\,\, n\ge 5, \\
			&\frac{7}{2}(n-2),  &if\, k&=4  \,\,\text{and}\,\, n\ge 6, \\
			&\frac{18}{5}(n-2),  &if\, k&=5,6,\\
			&\frac{15}{4}(n-2) +\frac{3}{2},
			\quad  &if\, k&=7.
		\end{aligned}
		\right.
	\end{equation*}
	Furthermore, when $k\in \{3,4,6\}$, the above lower bounds
are tight for infinitely many integers $n$, and when $k=7$, 
the lower bound is tight for 
$n\in \{24, 56\}$.
\end{theorem}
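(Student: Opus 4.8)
The plan is to deduce Theorem \ref{main2} from Theorem \ref{main1} via an exact size--crossing identity that holds throughout $\setg_k$. Fix $G\in\setg_k$ together with its $1$-plane drawing, write $n=|V(G)|$ and $c=cr_{\times}(G)$, and form the planarization $G^{\times}$ by replacing each crossing with a new degree-$4$ vertex. Then $G^{\times}$ is a plane graph with $n+c$ vertices and $|E(G)|+2c$ edges. The structural core of the argument is the claim that, for every $G\in\setg_k$ with $3\le k\le 7$, the planarization $G^{\times}$ is a simple plane triangulation. Granting this, Euler's formula gives $|E(G)|+2c=3(n+c)-6$, which rearranges to the identity
\begin{equation*}
|E(G)|=3(n-2)+cr_{\times}(G).
\end{equation*}

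First I would prove this structural claim, which I expect to be the main obstacle. The strategy is to take an arbitrary face $F$ of $G^{\times}$ and rule out $|F|\ge 4$. If every corner of $F$ is a real vertex, then $F$ is an empty region of the drawing and maximality forces every diagonal joining two non-consecutive corners of $F$ to already be an edge of $G$ drawn outside $F$; I would then invoke $k$-connectivity ($k\ge 3$) to show that such a configuration creates either a small separating set or a second crossing on an already-crossed edge, a contradiction. If $F$ is incident with a crossing vertex $x$ coming from edges $ab$ and $cd$, I would run the standard kite argument: unless the four neighbours of $x$ already span a $4$-cycle with all four sides present and crossing-free, maximality lets me insert a missing side while crossing at most one edge, again a contradiction. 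The case $k=3$ is the delicate one, and this is exactly where immovability enters: in the residual configurations that $3$-connectivity alone cannot exclude, I would exhibit a single-edge redrawing that strictly decreases the number of crossings, contradicting the immovability of $G$. A short preliminary step, using $3$-connectivity and the goodness of the drawing, rules out loops and multiple edges in $G^{\times}$ so that the triangulation count applies.

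With the identity established, the two main theorems become equivalent. Indeed, the well-known bound $cr(G)\ge |E(G)|-3(n-2)$ (delete one edge per crossing from an optimal drawing and apply the planar edge bound) together with $|E(G)|=3(n-2)+cr_{\times}(G)$ yields $cr(G)\ge cr_{\times}(G)$; since trivially $cr(G)\le cr_{\times}(G)$, we obtain $cr(G)=cr_{\times}(G)$ and hence
\begin{equation*}
|E(G)|=3(n-2)+cr(G)
\end{equation*}
for every $G\in\setg_k$. Theorem \ref{main2} then follows term by term from Theorem \ref{main1}: substituting the four lower bounds for $cr(G)$ and simplifying gives the four stated bounds, for instance $|E(G)|\ge 3(n-2)+\tfrac{n-2}{3}=\tfrac{10}{3}(n-2)$ when $k=3$ and $|E(G)|\ge 3(n-2)+\tfrac{3n}{4}=\tfrac{15}{4}(n-2)+\tfrac{3}{2}$ when $k=7$.

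Finally, sharpness transfers for free. Because the identity $|E(G)|=3(n-2)+cr(G)$ holds for every member of $\setg_k$, any graph that witnesses the tightness of the crossing-number bound in Theorem \ref{main1} simultaneously witnesses the tightness of the corresponding edge bound. Consequently the edge bounds are tight for infinitely many $n$ when $k\in\{3,4,6\}$ and for $n\in\{24,56\}$ when $k=7$, matching the tightness already recorded for Theorem \ref{main1}.
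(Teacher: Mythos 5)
Your proposal is correct and follows essentially the same route as the paper: establish that $G^\times$ is a triangulation (the paper's Lemmas~\ref{lemtr-3} and~\ref{lemtr-k}, with immovability used exactly where you place it, in the $3$-connected case), derive the identity $|E(G)|=3(n-2)+cr(G)$ via Euler's formula and the observation $cr(G)=cr_\times(G)$ (the paper's Lemma~\ref{optimal} and Remark~\ref{1pcross}), and then read off both the bounds and their tightness from Theorem~\ref{main1}. The arithmetic in all four cases checks out, so nothing further is needed.
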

 
In order to display the relevant information more intuitively, the known results on lower bounds for the crossing number and the size of maximal
1-plane graphs $G$ with given connectivity $k$ are collected in Table~\ref{tab1}. 

\captionsetup[table]{position=bottom}
\begin{table}[ht]
    \centering  
    \setlength{\tabcolsep}{12pt}
    \newcommand{\spc}{\hspace{3pt}}
      \renewcommand{\arraystretch}{1.2} 
\begin{tabular}{c|c|c|c|c|c}
 &$k=2$ & $k=3$ & $k=4$& $k=5,6$& $k=7$\\   \hline
$cr(G)\ge $     &?&$\frac{1}{3}n-\frac{2}{3}$&$\frac{1}{2}n-1$ &$\frac{3}{5}n-\frac{6}{5}$ &$\frac{3}{4}n$ \\
$|E(G)|\ge $      & $\Ceil{\frac{7}{3}n}-3$
\cite{HOZD}&$\frac{10}{3}n-\frac{20}{3}$&$\frac{7}{2}n-7$&$\frac{18}{5}n-\frac{36}{5}$ &$\frac{15}{4}n-6$\\
\end{tabular}
    \caption{The known lower bound for the crossing number and size of $k$-connected maximal 1-plane  graphs. Note: for $k=3$, the maximal 1-plane  graphs are required to be immovable.}
    \label{tab1}
\end{table}

The rest of this paper is structured as follows. In Section~\ref{sec-pre}, we present some elementary results on maximal 1-plane graphs.
In Section~\ref{sec-exmg}, we construct maximal $1$-plane graphs in $\setg_k$
for each $3\le k\le 7$
that demonstrate the tightness of the lower bound in our main results.
In Section~\ref{sec-main}, we complete the proofs of Theorems~\ref{main1} and~\ref{main2}.
Finally, in Section~\ref{sec:problem}, we propose open problems for further research.

\section{Preliminaries}\label{sec-pre}

A planar drawing partitions the plane into connected regions called the {\it faces}. Each face is bounded by a closed walk (not necessarily a cycle) called its {\it boundary}.
Two faces $F_1$ and $F_2$ are said to be {\it adjacent} if their boundaries share at least one common edge. By $\partial(F)$ we denote the set of vertices on the boundary of face $F$. A face $F$ is called a {\it triangle} if $|\partial(F)|=3$. A
{\it triangulation} (also known as maximal plane graph) is a plane graph 
in which all faces are triangles.  
The {\it dual graph}  $G^*$ of a plane graph $G$ is a graph that has a vertex corresponding to each face of $G$, 
and an edge joining each pair of vertices in $G^*$ which correspond to two adjacent faces  in $G$.  A {\it cut-set} of a  connected 
graph $G$ is a subset $S$ of $V(G)$ such that 
$G-S$  has more than one 
component.
A {\it minimum cut-set} of a graph is a cut-set of smallest possible size. A {\it minimal cut-set} is an cut-set of a graph that is not a proper subset of any other cut-set. Every minimum cut-set is a minimal cut-set, but the converse does not necessarily hold.

For any 1-plane graph $G$, let $G^\times$ be the plane graph obtained by replacing each crossing with a vertex of degree 4, $G_P$ be the plane graph obtained by  removing one edge from each crossing pair in $G$, and $G_P^*$ be the dual of $G_P$. 
Clearly, $G_p$ depends on the edges
that are removed, and is not unique. 
A vertex in  $G^\times$ is called {\it fake} if it corresponds to some crossing of $G$, and is {\it  true} otherwise.  A face of $G^\times$ is called {\it fake} if it is incident with some fake vertex in $G^\times$, and is {\it true} otherwise.
Each face in $G_P$ is either a true face 
of $G^\times$, called a {\it blue face}
of $G_P$, 
or can be obtained by 
merging at least two adjacent fake faces of $G^\times$, 
called a {\it red face} of $G_P$. 

A vertex in $G_P^*$ is termed 
either {\it red} or  {\it blue},
depending  on whether its corresponding face in $G_P$ is red or blue, respectively.
An edge $e$ of $G$ is called {\it non-crossing} if 
it does not cross other edges in $G$ and is {\it crossing} otherwise.

Let $\setf_{fk}(G^\times)$ and  $\setf_{tr}(G^\times)$ be the set of all fake faces and all true faces of $G^\times$, respectively, $\setf_{rd}(G_P)$ and  $\setf_{bl}(G_P)$ the set of red faces and blue faces of $G_P$, respectively, and $V_{rd}(G_P^*)$ and $V_{bl}(G_P^*)$  the set of red vertices and blue vertices of $G_P^*$, respectively. 
The following two observations 
are obtained directly by definition.

\begin{observation}\label{obs}
	For any maximal 1-plane graph $G$
	of order $n$ $(\ge 3)$,
	each red vertex  in $G_P^*$ 
	is adjacent to some other 
	red vertices in $G_P^*$, and 
\begin{equation*}
		|V_{rd}(G_P^*)|=|\setf_{rd}(G_P)|\le \frac{1}{2}|\setf_{fk}(G^\times)|, 
		\quad 
		|V_{bl}(G_P^*)|
		=|\setf_{bl}(G_P)|=|\setf_{tr}(G^\times)|.
\end{equation*}
\end{observation}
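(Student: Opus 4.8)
The plan is to treat the four assertions in order, since each one reduces to understanding how the faces of $G^\times$ regroup when the chosen crossing edges are deleted to form $G_P$. The two equalities $|V_{rd}(G_P^*)|=|\setf_{rd}(G_P)|$ and $|V_{bl}(G_P^*)|=|\setf_{bl}(G_P)|$ are immediate from the construction: the faces of $G_P$ are in bijection with the vertices of $G_P^*$, and a vertex is coloured red (respectively blue) precisely when its corresponding face is red (respectively blue).

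For $|\setf_{bl}(G_P)|=|\setf_{tr}(G^\times)|$ I would observe that the boundary of a true face of $G^\times$ contains no fake vertex, hence consists entirely of non-crossing edges of $G$. Since $G_P$ is obtained from $G$ by deleting only crossing edges, every such boundary is preserved; thus each true face of $G^\times$ persists unchanged as a face of $G_P$, and by definition it is blue. Conversely every blue face arises in exactly this way, so the correspondence is a bijection.

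For the inequality the key observation is that $G_P$ is obtained from $G^\times$ by deleting, at each crossing, the two half-edges of the removed edge and suppressing the resulting degree-two fake vertex; consequently every face of $G_P$ is a union of faces of $G^\times$. A local analysis at a single crossing shows that deleting a half-edge merges two adjacent fake faces, so every fake face is absorbed into a red face together with at least one neighbour, while a true face can never merge with anything (its boundary edges are non-crossing, hence never deleted). Therefore the fake faces of $G^\times$ are partitioned among the red faces of $G_P$, each red face receiving at least two of them; double counting yields $|\setf_{fk}(G^\times)|\ge 2|\setf_{rd}(G_P)|$, which is the claimed bound.

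Finally, for the adjacency statement I would argue locally at a crossing $x$ where $e_1$ survives and $e_2$ is removed: after deletion the surviving edge $e_1$ has a red region on each of its two sides near $x$, each being the merge of two fake faces. These two regions meet along $e_1$, so it suffices to show they are \emph{distinct} red faces, and I expect this distinctness to be the main obstacle, since it fails exactly when $e_1$ is a bridge of $G_P$. I would rule this out using the structural fact that in a maximal $1$-plane graph every crossing is framed by a quadrangle on its four endpoints, so that after removing $e_2$ the edge $e_1$ becomes a diagonal of a $4$-cycle and therefore lies on a cycle of $G_P$. This forces the two adjacent red regions to be different faces, and hence every red vertex of $G_P^*$ has a red neighbour.
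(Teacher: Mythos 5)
The paper itself offers no argument for this observation: it is introduced with the remark that it is ``obtained directly by definition,'' the substantive content (that every face of $G_P$ is either a surviving true face of $G^\times$ or a merge of at least two adjacent fake faces) having been folded into the definition of red and blue faces. Your write-up therefore supplies details the paper omits, and most of it is sound: the dual correspondences are immediate; a true face of $G^\times$ has no fake vertex on its boundary, so its boundary consists of non-crossing edges which are never deleted, giving the bijection $\setf_{bl}(G_P)\leftrightarrow\setf_{tr}(G^\times)$; and the partition of the fake faces among the red faces, each red face receiving at least two of them, yields $2|\setf_{rd}(G_P)|\le|\setf_{fk}(G^\times)|$.

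The gap is in the final step, and it sits exactly where you predicted the difficulty would be. To show that the two merged regions flanking the surviving edge $e_1=v_1v_3$ are \emph{distinct} faces, you must show $e_1$ is not a bridge of $G_P$, and for this you invoke Lemma~\ref{K_4} to claim that $e_1$ becomes the diagonal of a $4$-cycle \emph{of $G_P$}. But Lemma~\ref{K_4} only guarantees that $v_1v_2,v_2v_3,v_3v_4,v_4v_1$ are edges of $G$; any of these may itself be a crossing edge and may be the one deleted from its own crossing pair when $G_P$ is formed, so the quadrangle need not survive into $G_P$ (nor need it be drawn framing the crossing). Hence you have not actually exhibited a cycle of $G_P$ through $e_1$. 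To close this you would need either to argue that from each of the pairs $\{v_1v_2,v_2v_3\}$ and $\{v_1v_4,v_4v_3\}$ enough edges can be retained to put $e_1$ on a triangle of $G_P$ (handling the recursion that arises when, say, $v_1v_2$ is itself a deleted crossing edge whose partner again induces a $K_4$), or to restrict to the setting in which the paper actually applies the observation, namely when $G^\times$ is a triangulation: there $G_P$ is a triangulation by Observation~\ref{tri}, hence $3$-connected and bridgeless, and the two flanking red faces are automatically distinct.
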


\begin{observation}\label{tri}
For any maximal 1-plane graph $G$
	of order $n$ $(\ge 3)$,
if $G^\times$ is a triangulation, then the following hold:
\begin{enumerate}
\setlength{\itemindent}{2em}
\item [{\rm(i)}] $G_P$ is also a triangulation with $3n-6$ edges and $2n-4$ faces; 
\item [{\rm(ii)}] $G_P^*$ is a 3-regular  plane graph with $2n-4$ vertices and  $3n-6$ edges; and 
\item [{\rm(iii)}] $cr_{\times}(G)=\frac{1}{2}|V_{rd}(G_P^*)|$ and $|E(G)|=3n-6+cr_{\times}(G)$.
\end{enumerate}
\end{observation}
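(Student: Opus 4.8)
The plan is to establish part (i) first, after which (ii) and (iii) follow from Euler's formula and routine counting. Write $c=cr_{\times}(G)$ for the number of crossings, so that $G^\times$ has exactly $n+c$ vertices, of which $c$ are fake, and $n\ge 3$ true. Since $G^\times$ is assumed to be a triangulation, it has $3(n+c)-6$ edges and $2(n+c)-4$ faces. The first thing I would record is a structural fact: no two fake vertices can be adjacent in $G^\times$, because an edge of $G^\times$ joining two fake vertices would be a portion of an edge of $G$ lying between two of its crossings, contradicting $1$-planarity (each edge is crossed at most once). Consequently every triangular face of $G^\times$ is incident with at most one fake vertex, so the four faces around each fake (degree-$4$) vertex are pairwise distinct and together account for all fake faces; hence $|\setf_{fk}(G^\times)|=4c$.

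The crux is then to understand the red faces of $G_P$ by a purely local analysis at each crossing. Fix a crossing of edges $ab$ and $cd$ at a fake vertex $x$, and suppose $cd$ is the edge deleted when forming $G_P$. Because the four faces of $G^\times$ incident with $x$ are triangles, the neighbours of $x$ occur in the cyclic order $a,c,b,d$, and the edges $ac,cb,bd,da$ all belong to $G^\times$ and, being uncrossed, to $G_P$. Deleting $cd$ and suppressing the resulting degree-$2$ vertex $x$ leaves the chord $ab$ inside the quadrilateral on $a,c,b,d$, splitting it into the two triangles $\triangle abc$ and $\triangle abd$. Thus each crossing contributes exactly two triangular red faces, giving $|\setf_{rd}(G_P)|=2c$, while the blue faces are precisely the $2(n+c)-4-4c=2n-4-2c$ true faces of $G^\times$. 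Every face of $G_P$ is therefore a triangle. Noting that $G_P$ is a simple connected plane graph (connected because the endpoints $c,d$ of each deleted edge remain joined through the surviving frame edges $ca,ad$), Euler's formula then yields $|E(G_P)|=3n-6$ and $2n-4$ faces, proving (i).

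Statement (ii) is immediate once (i) is in hand: each face of the triangulation $G_P$ is bounded by exactly three edges, so every vertex of the dual $G_P^*$ has degree $3$, and $G_P^*$ has one vertex per face and one edge per edge of $G_P$, i.e. $2n-4$ vertices and $3n-6$ edges. For (iii), the identity $|V_{rd}(G_P^*)|=|\setf_{rd}(G_P)|=2c$ from the definition of red vertices gives $cr_{\times}(G)=c=\frac{1}{2}|V_{rd}(G_P^*)|$, while $G$ is recovered from $G_P$ by reinserting the one edge deleted at each of the $c$ crossings—these deleted edges being distinct, since each edge of $G$ is crossed at most once—so that $|E(G)|=|E(G_P)|+c=3n-6+cr_{\times}(G)$. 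The main obstacle in all of this is the local analysis underpinning (i): one must argue carefully that the face-merging at each crossing is genuinely local and produces two triangles, never a larger polygon, which rests on the two facts that adjacent fake vertices are forbidden and that the four faces around a fake vertex are triangles. Once these are secured, the remaining bookkeeping is routine.
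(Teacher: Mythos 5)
Your proof is correct, and it follows the route the paper intends: the paper states this as an observation ``obtained directly by definition'' and gives no written proof, so your argument is precisely the spelling-out of what is left implicit. The key points you make explicit --- that no two fake vertices are adjacent, that the four triangular faces around each fake vertex are distinct so $|\setf_{fk}(G^\times)|=4c$, and that deleting one edge of a crossing merges those four faces locally into exactly two red triangles --- are exactly the facts needed to make the counting in (i)--(iii) rigorous.
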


Three known 
properties on maximal 1-plane graphs are given below
(see \cite{JB,FJ} and \cite{OHD}).

\begin{lemma}[\cite{JB,FJ}]
	\label{adjacent} 
For any face $F$ of a maximal 1-plane graph $G$, 
$\partial(F)$ contains at least two vertices; and any two true vertices   in 
$\partial(F)$ are adjacent in $G$.
\end{lemma}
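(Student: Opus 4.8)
The plan is to derive both assertions from the maximality of $G$ together with the local structure of a $1$-plane drawing around its crossings. I would work inside the planarization $G^\times$, whose faces are exactly the faces of the drawing of $G$ and whose vertices are the true vertices of $G$ together with the fake vertices (one per crossing).

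I would prove the adjacency claim first, since it is the heart of the statement. Suppose, for contradiction, that $\partial(F)$ contains two distinct true vertices $u$ and $v$ that are non-adjacent in $G$. Both $D(u)$ and $D(v)$ lie on the boundary of the open region $F$, so I can route an arc $\gamma$ from $D(u)$ to $D(v)$ whose interior lies entirely inside $F$; by construction $\gamma$ meets no edge of $G$. Drawing the new edge $uv$ along $\gamma$ then yields a drawing of $G+uv$ in which $uv$ takes part in no crossing and no previously drawn edge acquires a new crossing, so the drawing is still $1$-planar; since $u$ and $v$ were non-adjacent, it is also simple. This contradicts the maximality of $G$, so any two true vertices on $\partial(F)$ must be adjacent.

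For the cardinality claim I would split according to whether $F$ is a true or a fake face. If no fake vertex lies on $\partial(F)$, then $\partial(F)$ is a closed walk of the simple graph $G$; as $G$ is connected with $n\ge 3$ and has no loops, this walk visits at least two distinct true vertices. If some fake vertex $x\in\partial(F)$, I would use that each edge of a $1$-plane graph is crossed at most once: writing $x$ as the crossing of two edges $e_1$ and $e_2$, these edges are vertex-disjoint (edges sharing an endpoint do not cross in a good drawing), so each of the four segments emanating from $x$ reaches one of the four distinct true endpoints of $e_1,e_2$ without any further crossing. The boundary of $F$ traverses $x$ via two segments that are consecutive in the rotation at $x$, and consecutive segments around a transversal crossing belong to different edges; hence they lead to two distinct true vertices lying on $\partial(F)$. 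In every case $|\partial(F)|\ge 2$.

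The arc-insertion step is routine; the step needing the most care is the local analysis at a fake vertex, namely confirming that the four crossing-segments reach four distinct true vertices and that the two boundary segments of $F$ meeting at $x$ come from different edges, which is exactly where the good-drawing and at-most-one-crossing hypotheses enter. A secondary point to handle cleanly is that $u$ and $v$ in the adjacency argument are genuinely distinct and can be joined inside $F$ even when $F$ is not simply connected, which holds because $F$ is a connected region with both points on its boundary.
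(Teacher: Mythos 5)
The paper does not prove this lemma; it is quoted from the cited references (Bar\'at--T\'oth and Brandenburg et al.), so there is no in-paper proof to compare against. Your argument is correct and is essentially the standard one: the adjacency claim follows by routing a crossing-free arc for $uv$ inside $F$ and invoking maximality of the drawing (the same device the paper itself uses in its proof of Lemma~\ref{join}), and the cardinality claim follows from the local structure at a crossing --- every segment of $G^\times$ incident with a fake vertex ends at a true vertex, and the two segments of a facial walk that are consecutive at a crossing belong to the two distinct, endpoint-disjoint crossing edges, hence reach two distinct true vertices. The only caveat is the implicit assumption $n\ge 3$ (and connectedness), which you flag and which is also implicit in the paper's usage.
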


For any non-empty subset $S\subseteq V(G)$,
let $G[S]$ denote the subgraph of $G$
induced by $S$.

\begin{lemma}[\cite{JB,FJ}]\label{K_4} 
For any two edges $ab$ and $cd$
	in a maximal 1-plane graph $G$,
	if they cross each other, 
then $G[\{a,b,c,d\}]\cong K_4$.
\end{lemma}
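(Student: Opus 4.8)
The plan is to reduce the statement to the adjacency property already recorded in Lemma~\ref{adjacent}, by reading off the local structure of the drawing at the point where $ab$ and $cd$ cross. First I would fix this crossing point $x$. Since the drawing is good, two edges sharing a vertex never cross, so $ab$ and $cd$ have no common endpoint and the four vertices $a,b,c,d$ are pairwise distinct. Because $G$ is $1$-plane, each of $ab$ and $cd$ is crossed \emph{exactly} once, namely at $x$; in particular the four half-edges $xa,xb,xc,xd$ are free of crossings. Consequently, in the planarization $G^\times$ the fake vertex corresponding to $x$ has degree $4$, and following each crossing-free half-edge we reach an actual vertex, so its four neighbours are precisely the true vertices $a,b,c,d$. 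As $a,b$ are the two ends of the single arc $ab$ passing through $x$, and likewise $c,d$ for $cd$, the cyclic order of these neighbours around $x$ is $a,c,b,d$.

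Next I would use this cyclic order to locate the relevant faces of the drawing. The four half-edges partition a small disc around $x$ into four corners, and the corner lying between the consecutive half-edges $xp$ and $xq$ belongs to a face I denote $F_{pq}$; this produces faces $F_{ac},F_{cb},F_{bd},F_{da}$ incident with $x$. Because each half-edge is crossing-free, tracing the boundary of $F_{pq}$ away from $x$ along $xp$ reaches the vertex $p$, and along $xq$ reaches $q$, before meeting any further crossing. Hence both $p$ and $q$ are true vertices lying on $\partial(F_{pq})$.

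Finally I would invoke Lemma~\ref{adjacent}. Applied to each of $F_{ac},F_{cb},F_{bd},F_{da}$, it yields that the two true vertices on the boundary are adjacent in $G$, so $ac,cb,bd,da\in E(G)$. Together with the given edges $ab$ and $cd$, this accounts for all six pairs among $\{a,b,c,d\}$, whence $G[\{a,b,c,d\}]\cong K_4$.

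The step I expect to require the most care is the claim that each pair such as $(a,c)$ genuinely lies on the boundary of a common face incident with $x$; this rests entirely on the crossing-freeness of the half-edges established in the first paragraph, so I would make that verification explicit rather than geometric hand-waving. If one prefers a self-contained argument that does not cite Lemma~\ref{adjacent}, the same conclusion follows directly from maximality: were $ac\notin E(G)$, one could route a new edge $ac$ inside the face $F_{ac}$, hugging the crossing-free half-edges $xa$ and $xc$, thereby adding an edge without creating any crossing or multiplicity, contradicting the maximality of $G$; the analogous routings handle $cb$, $bd$, and $da$.
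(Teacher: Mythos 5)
The paper does not prove Lemma~\ref{K_4}; it is quoted from \cite{JB,FJ} without proof. Your argument is correct and is essentially the standard one: since each of $ab$ and $cd$ is crossed only at the common crossing point, the four half-edges there are crossing-free, so each of the four corner faces has two of the true vertices $a,b,c,d$ on its boundary, and Lemma~\ref{adjacent} (or, equivalently, your direct rerouting-by-maximality argument) supplies the four missing edges $ac$, $cb$, $bd$, $da$.
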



\begin{lemma}[\cite{OHD}]\label{lemtr-3}
Let $G$ be a $3$-connected maximal 1-plane graph. If $G$ is immovable, then $G^\times$ is a triangulation.
\end{lemma}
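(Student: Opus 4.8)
The plan is to prove the statement directly, by showing that \emph{every} face of the plane graph $G^\times$ is a triangle, and to organise the argument according to whether a face is incident to a crossing vertex. Call a face \emph{true} if all of its boundary vertices are true, and \emph{fake} otherwise. As a preliminary reduction I would first observe that $G^\times$ is $2$-connected: since $G$ is $3$-connected it has no cut vertex or bridge, and by Lemma~\ref{K_4} every crossing is surrounded by a $K_4$ whose four side-edges survive the deletion of the crossing vertex, so $G^\times$ has no cut vertex. Consequently every face boundary is a cycle, and it suffices to show each boundary cycle has length $3$. The structural inputs I will repeatedly use are Lemma~\ref{adjacent} (any two true vertices on a face boundary are adjacent), Lemma~\ref{K_4} (the endpoints of a crossing pair induce $K_4$), the $1$-planarity constraint that no edge is crossed twice, and the two hypotheses of $3$-connectivity and immovability.

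For a true face $F$ with boundary cycle $v_1v_2\cdots v_m$, Lemma~\ref{adjacent} makes $\{v_1,\dots,v_m\}$ a clique of $G$, and I would first bound $m$. Regard the boundary as a Jordan curve on the sphere; the chord $v_1v_3$ exists but cannot lie on $\partial F$, so it is drawn in the complementary disk and splits it into two parts meeting only along $v_1v_3$, with $v_2$ on one side and $v_4,\dots,v_m$ on the other. Since $F$ is empty, no edge enters it, so any edge from $v_2$ to a vertex on the far side must cross $v_1v_3$. Hence if $m\ge 5$ then both $v_2v_4$ and $v_2v_5$ cross $v_1v_3$, making $v_1v_3$ crossed twice and contradicting $1$-planarity; thus $m\le 4$. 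The case $m=4$ is exactly where immovability is used: the same Jordan-curve argument forces $v_1v_3$ and $v_2v_4$ to cross each other, so rerouting $v_1v_3$ through the empty quadrilateral $F$ deletes that crossing without creating a new one, strictly decreasing $cr_{\times}$ and contradicting immovability. Therefore every true face is a triangle.

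For a fake face I would localise at a crossing $w=ab\times cd$, where Lemma~\ref{K_4} supplies the four side-edges $ac,cb,bd,da$, and consider the face $F$ lying between the segments $wa$ and $wc$; its boundary contains the adjacent true vertices $a$ and $c$, and I must show $F$ is the triangle $wac$. The simple closed curve $\gamma=wa\cup wc\cup ac$ (simple because $ac$ cannot re-cross the already-crossed edges $ab$ or $cd$) bounds a disk $\Delta$ on the side of $F$, and the claim is that $\Delta$ has empty interior. An interior vertex of $\Delta$ could reach the rest of $G$ only through $a$, through $c$, or through a single edge crossing $ac$, because crossing $wa\subset ab$ or $wc\subset cd$ would violate $1$-planarity. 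If no edge crosses $ac$, then $\{a,c\}$ is a $2$-cut, contradicting $3$-connectivity; in the remaining \emph{shielded} case, where a single edge $xz$ crosses $ac$, one extracts an empty fake quadrilateral bounded by $w,a,c$ and the interior, and rerouting $ac$ across it removes the crossing $ac\times xz$, contradicting immovability. Hence $\Delta$ is empty, $F$ is the triangle $wac$, and by symmetry all four faces incident to $w$ are triangles, completing the proof that $G^\times$ is a triangulation.

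The main obstacle is the fake-face analysis, and specifically the control of the interior of $\Delta$ in the shielded case: one must rule out the various configurations of interior vertices and verify that immovability genuinely yields a legal single-edge redraw that reduces crossings, checking in particular that the rerouted edge remains crossed at most once and that the crossing at $w$ is left undisturbed. The unifying idea that drives both cases is the principle that an immovable drawing can contain no empty quadrilateral face whose diagonal is realised by a crossed edge, since that edge could always be rerouted across the empty face to save a crossing; the true-face and fake-face arguments are then two applications of this principle, carried out after the relevant combinatorial structure (a clique boundary from Lemma~\ref{adjacent}, or a $K_4$ at a crossing from Lemma~\ref{K_4}) has been extracted.
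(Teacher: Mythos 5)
This lemma is imported from \cite{OHD}; the paper itself gives no proof of it, so there is no in-text argument to compare yours against, and I judge it on its own. Your overall strategy is sound and, as far as I can check, correct: the reduction to showing that every face boundary of the $2$-connected plane graph $G^\times$ is a $3$-cycle, and the true-face analysis (the clique boundary from Lemma~\ref{adjacent}, the observation that for $m\ge 5$ the chords $v_2v_4$ and $v_2v_5$ would both have to cross $v_1v_3$, and the use of immovability to kill the quadrilateral case by rerouting a diagonal through the empty face) all go through.

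The step that is not yet a proof is the ``shielded case'' of the fake-face analysis. The phrase ``one extracts an empty fake quadrilateral bounded by $w,a,c$ and the interior'' does not identify a well-defined empty region: when $\mathrm{int}(\Delta)$ contains vertices and $ac$ is crossed by an edge $xz$, the only cut-set your separation argument produces is $\{a,c,z\}$, which a $3$-connected graph is allowed to have, so connectivity cannot finish the job, and you have not said which region the rerouted $ac$ actually traverses or why it is empty. The repair is to reroute $ac$ through the face $F$ itself rather than through an extracted quadrilateral: $F$ is by definition the face of $G^\times$ occupying the corner of $w$ between the half-edges $wa$ and $wc$, so its boundary cycle contains the edges $wa$ and $wc$ of $G^\times$ and in particular both $a$ and $c$; an arc from $a$ to $c$ whose interior lies in $F$ crosses nothing, so if $ac$ were crossed, replacing its drawing by this arc would be a single-edge redraw that removes exactly one crossing and creates none, contradicting immovability. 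This shows that $ac$ is uncrossed for every consecutive pair of endpoints around every crossing; only after that does your $2$-cut argument apply, emptying $\mathrm{int}(\Delta)$ of vertices (and hence of edges, since an edge meeting $\mathrm{int}(\Delta)$ would have to cross $wa$, $wc$ or $ac$, or end at an interior vertex) and forcing $F$ to be the triangle $awc$. With the two steps taken in this order, the argument is complete.
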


In the following, we show that 
the conclusion of Lemma~\ref{lemtr-3} 
also holds if the condition that 
$G$ is immovable is replaced by 
that $G$ is $4$-connected. 

\begin{lemma}\label{lemtr-k}
Let $G$ be a $4$-connected maximal 1-plane graph.  Then $G^\times$ is a triangulation.
\end{lemma}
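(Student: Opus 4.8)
The plan is to argue by contradiction: assuming $G^\times$ is not a triangulation, I will exhibit a cut-set of size at most $3$ in $G$, contradicting $4$-connectivity. Since $G$ is $4$-connected, replacing each crossing by a degree-$4$ vertex keeps $G^\times$ $2$-connected, so every face of $G^\times$ is bounded by a cycle; if $G^\times$ is not a triangulation, some face $F$ has boundary cycle $C=v_1v_2\cdots v_m$ with $m\ge 4$. Because each edge of $G$ is crossed at most once, no two fake vertices are adjacent in $G^\times$, so the fake vertices lie pairwise non-consecutively on $C$; in particular every fake vertex of $C$ has two true neighbours on $C$. Using the maximality of $G$ together with Lemma~\ref{adjacent}, no chord can be inserted into the empty region $F$, and any two true vertices of $C$ are adjacent in $G$; their connecting edges are therefore drawn in the exterior of $F$.

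The heart of the argument is a trapping construction. First I choose a consecutive triple $v_{i-1}v_iv_{i+1}$ on $C$ whose end-vertices $v_{i-1},v_{i+1}$ are true: if $C$ carries a fake vertex I take it as the middle vertex $v_i$ (its two neighbours are then true), and otherwise I take any triple, using $m\ge 4$ to guarantee that $v_{i-1}$ and $v_{i+1}$ are non-consecutive on $C$. By the previous paragraph the edge $v_{i-1}v_{i+1}$ exists and is drawn outside $F$; together with the boundary path $v_{i-1}v_iv_{i+1}$ it bounds a region $R$ disjoint from $F$. This $R$ traps a genuine vertex of $G$: namely $v_i$ when $v_i$ is true, and otherwise the far endpoint inside $R$ of one of the two crossing edges meeting at the fake vertex $v_i$, which exists and is distinct from $v_{i-1},v_{i+1}$ by Lemma~\ref{K_4}. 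I then bound the ways to leave $R$. Its boundary consists of the edge $v_{i-1}v_{i+1}$ together with the two path-edges at $v_i$; the path-edges cannot be crossed (they are either non-crossing boundary edges of $F$ or half-edges of already-crossed edges, and each edge of $G$ is crossed at most once), while $v_{i-1}v_{i+1}$ is crossed at most once. Hence every path from the interior of $R$ to its exterior passes through $v_{i-1}$, through $v_{i+1}$, or through the unique edge crossing $v_{i-1}v_{i+1}$; taking $z$ to be the exterior endpoint of that crossing edge when it exists, the set $\{v_{i-1},v_{i+1},z\}$, of size at most $3$, separates the vertex trapped in $R$ from a vertex such as $v_{i+2}$ lying outside $R$. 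This contradicts the $4$-connectivity of $G$.

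The main obstacle is to make the topological claims fully rigorous: that the region $R$ is well defined and disjoint from $F$, that the listed routes are the only ways to escape $R$, and that the two sides of the candidate cut are both non-empty so that it is a genuine cut-set. In particular I must verify that $z$ is a true vertex different from $v_{i-1}$ and $v_{i+1}$, which is exactly where Lemma~\ref{K_4} enters, and that some boundary vertex of $C$ (for instance $v_{i+2}$, which exists since $m\ge 4$) really lies outside $R$. The case distinction between a true and a fake middle vertex $v_i$ must also be handled uniformly, checking in each case that escape across the two path-edges of $R$ is blocked by the ``crossed at most once'' property. Once these local facts are secured the counting is immediate, and no connectivity input beyond $k\ge 4$ is required; this parallels how Lemma~\ref{lemtr-3} uses immovability in place of the connectivity hypothesis to block the analogous escape.
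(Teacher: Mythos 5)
Your proposal is correct and follows essentially the same route as the paper's proof: both locate a non-triangular face $F$ of $G^\times$, pick two non-consecutive true vertices on its facial cycle, invoke Lemma~\ref{adjacent} (via maximality) to obtain the edge of $G$ joining them, and use the resulting closed curve to extract a cut-set consisting of those two vertices plus at most one endpoint of the edge crossing the new edge, contradicting $4$-connectivity. The only cosmetic difference is that the paper closes the Jordan curve with an arc $L_{uv}$ drawn through $F$, so that the internal vertices of the two boundary paths witness nonemptiness of both sides, whereas you close it with the $2$-path $v_{i-1}v_iv_{i+1}$ on $\partial(F)$; the induced vertex partition and the resulting cut-set are the same.
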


\begin{proof}
Suppose that $G^\times$ is not a triangulation. Then $G^\times$ has a face $F$ bounded by a facial
cycle $C$ with at least four vertices. As any two fake vertices in $G^\times$ are not adjacent, $G^\times$
has two true vertices $u$ and $v$ in $C$ which are not adjacent in $C$. By Lemma~\ref{adjacent},   $u$ and $v$ must be adjacent in $G$. 
Observe that the edge $e=uv$ in $G$
joining $u$ and $v$ is not on $C$,
and $C$ can be divided 
into two paths with ends $u$ and $v$, say $P_1$ and $P_2$. 
Thus, each $P_i$ contains an internal vertex $z_i$, as shown in Figure~\ref{fig1}.  
We can draw a line segment $L_{uv}$ within face $F$  connecting $u$ and $v$. Then the Jordan closed curve $\mathcal{O}$, formed by $L_{uv}$ and $uv$, divides $G$ into two parts $G_1$ and $G_2$.  As each fake vertex in $G^\times$ is adjacent to four true vertices, regardless of whether  $z_1$ and $z_2$  are true vertices or not, both $G_1$ and $G_2$ must contain true vertices.

Assume that $e$ is a non-crossing edge in $G$. Then $G-\{u,v\}$ is disconnected. 
Assume that $e$ is crossed by some edge $e'=u'v'$ in $G$. 
Then one can verify that $G-\{u,v,u'\}$ or $G-\{u,v,v'\}$ is disconnected.
Both cases 	contradict the given condition that 
$G$ is $4$-connected. 

Thus, the result holds.
\end{proof}

\begin{figure}[H]
\centering
\includegraphics[width=6 cm] 
{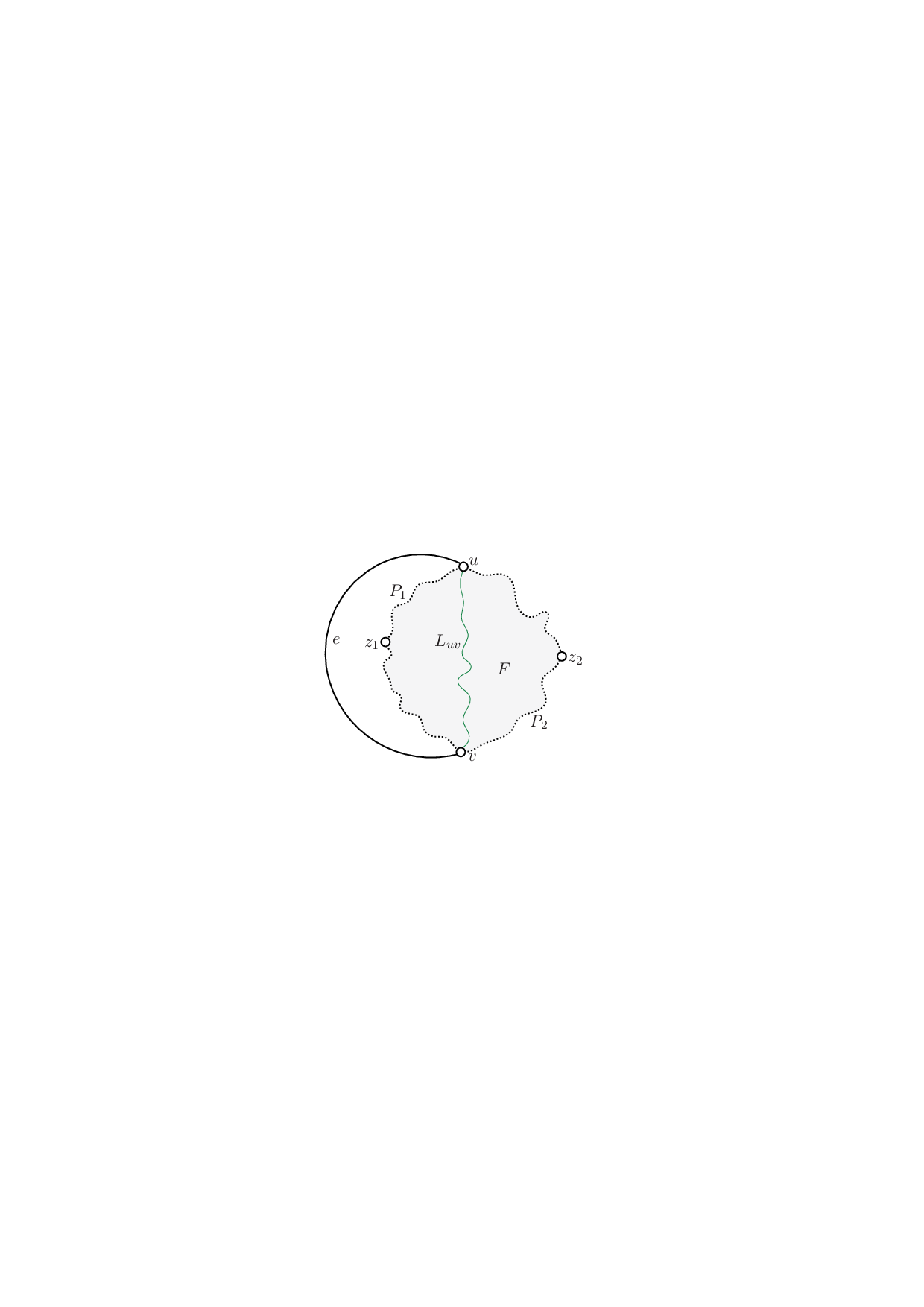}

\caption{A face $F$ of $G^\times$ bounded by a facial
cycle $C$ with at least four vertices}
\label{fig1} 
\end{figure}

\begin{lemma}\label{optimal}
Let $G$ be  a 1-plane graph.  If $G^\times$ is a triangulation, then $cr(G)=cr_{\times}(G)$.  
\end{lemma}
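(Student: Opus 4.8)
The plan is to combine two facts: the trivial inequality $cr(G)\le cr_{\times}(G)$, which holds because the given $1$-planar drawing of $G$ is one particular drawing realizing $cr_{\times}(G)$ crossings, together with a matching lower bound $cr(G)\ge cr_{\times}(G)$. The whole argument hinges on translating the crossing count of the fixed $1$-planar drawing into an edge count via the hypothesis that $G^\times$ is a triangulation, and then invoking the classical Euler-formula lower bound for the crossing number.

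First I would record the identity
\[
|E(G)| \;=\; 3n-6 + cr_{\times}(G),
\qquad n=|V(G)|.
\]
When $G$ is maximal this is exactly Observation~\ref{tri}(iii); for a general $1$-plane graph I would re-derive it directly from Euler's formula applied to the plane triangulation $G^\times$. Writing $n^{\times}$ and $m^{\times}$ for the numbers of vertices and edges of $G^\times$, one has $n^{\times}=n+cr_{\times}(G)$, since each crossing contributes exactly one fake (degree-$4$) vertex. Each of the $2\,cr_{\times}(G)$ crossing edges of $G$ is split into two arcs of $G^\times$, while each of the remaining $|E(G)|-2\,cr_{\times}(G)$ non-crossing edges stays intact, so $m^{\times}=|E(G)|+2\,cr_{\times}(G)$. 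Substituting $m^{\times}=3n^{\times}-6$, valid because $G^\times$ is a triangulation, yields the displayed identity.

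Next I would apply the standard bound $cr(G)\ge |E(G)|-3n+6$. This follows from the usual edge-deletion argument: take any optimal drawing of $G$, delete one edge from each crossing pair, and obtain a plane, hence simple planar, graph on $n$ vertices whose edge set has size at least $|E(G)|-cr(G)$, because at most one edge is deleted per crossing and there are $cr(G)$ crossings. Since a simple planar graph on $n\ge 3$ vertices has at most $3n-6$ edges (and here $n\ge 3$ because $G^\times$ is a triangulation), we get $|E(G)|-cr(G)\le 3n-6$, that is, $cr(G)\ge |E(G)|-3n+6$. Combining this with the identity above gives $cr(G)\ge cr_{\times}(G)$, and together with the trivial reverse inequality we conclude $cr(G)=cr_{\times}(G)$.

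I do not expect a genuine obstacle; the lemma is essentially a bookkeeping consequence of the triangulation hypothesis. The one point that needs care is that Lemma~\ref{optimal} is stated for an arbitrary $1$-plane graph, whereas Observation~\ref{tri} is phrased only for maximal $1$-plane graphs, so I would derive the edge identity from Euler's formula rather than merely cite the observation, thereby covering the non-maximal case as well.
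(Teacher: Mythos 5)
Your proof is correct and follows essentially the same route as the paper: both establish the identity $|E(G)|=3n-6+cr_{\times}(G)$ by applying Euler's formula to the triangulation $G^\times$, and combine it with the standard planarity bound $cr(G)\ge |E(G)|-3n+6$ and the trivial inequality $cr(G)\le cr_{\times}(G)$. The only cosmetic difference is that the paper substitutes the face count $f=\tfrac{2}{3}|E(G^\times)|$ into Euler's formula while you use the equivalent $|E(G^\times)|=3|V(G^\times)|-6$.
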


\begin{proof}
It is known that every planar graph with $n$ vertices has at most $3n-6$ edges. This implies that $cr(G)\ge |E(G)|-3|V(G)|+6$.	
Let $c=cr_\times(G)$. Note that $cr(G)\le c$. It suffices to show that $c=|E(G)|-3|V(G)|+6$.

Note that
$|V(G^\times)|=|V(G)|+c$ and $|E(G^\times)|=|E(G)|+2c$. 
Let $f$ be the number of faces in $G^\times$. Then, 
as $G^\times$ is a triangulation, 
$3f=2(|E(G)|+2c)$. 
Applying the Euler's formula
to $G^\times$ yields that 
\begin{equation*}
|V(G)|+c-(|E(G)|+2c)+\frac{2}{3}(|E(G)|+2c)=2,
\end{equation*}
implying that $c=|E(G)|-3|V(G)|+6$. 

This completes the proof.
\end{proof}

\begin{remark}\label{1pcross}
From Lemma~\ref{optimal},  we find that Observation \ref{tri} (iii) can be stated as  $cr(G)=\frac{1}{2}|V_{rd}(G_P^*)|$ and $|E(G)|=3n-6+cr(G)$.
\end{remark}

\begin{lemma}\label{join}
Let $G$ be a maximal 1-plane graph, and let  $F_1$ and $F_2$  be two faces of $G$ sharing a non-crossing edge $e$.
Then, for any two vertices 
$v_1$ and $v_2$ in $G$, 
if $v_i \in \partial(F_i) \setminus \partial(F_{3-i}) $ for each $i=1,2$, 
they are adjacent in $G$.
\end{lemma}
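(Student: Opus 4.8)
The plan is to argue by contradiction using the maximality of $G$. Suppose $v_1$ and $v_2$ are not adjacent; I will insert the edge $v_1v_2$ into the drawing of $G$ so that the result is still a simple $1$-plane graph, contradicting maximality. Write $e=ab$. Since $e$ is shared by $F_1$ and $F_2$, both $a$ and $b$ lie in $\partial(F_1)\cap\partial(F_2)$. Hence the hypotheses $v_1\in\partial(F_1)\setminus\partial(F_2)$ and $v_2\in\partial(F_2)\setminus\partial(F_1)$ force $v_1\ne v_2$ and $v_1,v_2\notin\{a,b\}$; in particular neither $v_1$ nor $v_2$ is an endpoint of $e$. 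These are exactly the facts the ``$\setminus\partial(F_{3-i})$'' clauses supply, and they are what make the insertion admissible.

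For the construction, note that since $e$ is non-crossing it bounds precisely the two regions $F_1$ and $F_2$, so $F_1\cup\mathrm{int}(e)\cup F_2$ is an open, connected region of the plane. Because $v_1\in\partial(F_1)$ it is accessible from the interior of $F_1$, and likewise $v_2$ from the interior of $F_2$. I therefore route a simple arc that starts at $v_1$, runs through the interior of $F_1$, crosses $e$ transversally at a single interior point (thereby passing from $F_1$ into $F_2$), and continues through the interior of $F_2$ to $v_2$. By construction this arc meets the rest of the drawing only in its single crossing with $e$.

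It remains to verify that declaring this arc to be the edge $v_1v_2$ yields a simple $1$-plane graph. No loop or multi-edge is created, since $v_1\ne v_2$ and $v_1v_2\notin E(G)$ by assumption. The arc crosses exactly one edge, namely $e$; and as $e$ was non-crossing in $G$, after insertion both $e$ and $v_1v_2$ are crossed exactly once, so $1$-planarity is preserved. The good-drawing requirement that edges sharing an endpoint do not cross is met, because the only crossing is between $v_1v_2$ and $e$, and $v_1,v_2\notin\{a,b\}$. This contradicts the maximality of $G$, so $v_1$ and $v_2$ must be adjacent. The one point that needs care is the routing itself: the arc must avoid every edge other than $e$, in particular the edges incident with $v_1$ and $v_2$. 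This is guaranteed by starting and ending in the respective face interiors and crossing $e$ at an interior point away from $a$ and $b$, which is precisely where the hypothesis that $v_1,v_2$ are not endpoints of $e$ is essential.
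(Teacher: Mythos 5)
Your proof is correct and follows the same approach as the paper's: assume non-adjacency and derive a contradiction with maximality by routing a new edge $v_1v_2$ through $F_1$ and $F_2$ that crosses the non-crossing edge $e$ exactly once. You simply spell out more of the routine verifications (that $v_1\ne v_2$, that neither is an endpoint of $e$, and that the insertion preserves simplicity, $1$-planarity, and the good-drawing conditions) than the paper does.
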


\begin{proof}
 Suppose, to the contrary, that $v_1$ and $v_2$ are non-adjacent in $G$. As the common edge $e$ of $F_1$ and $F_2$ is non-crossing, we can always add a new edge joining $v_1$ and $v_2$ such that it traverses across $F_1$ and $F_2$ and crosses $e$ exactly once.  This contradicts the maximality of $G$.
\end{proof}

\begin{lemma}\label{trface}
Let $G\in \mathcal{G}_k$ and $n=|V(G)|\ge 5$, 
	where $3\le k\le 5$.
Then,  
any true face $\Delta$ of $ \setf_{tr}(G^\times)$
	is adjacent to at most  $5-k$ 
	 true faces of $G^\times$, 
	where $n\ge 6$ when $k\ge 4$.
\end{lemma}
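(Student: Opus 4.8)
The plan is to exploit the triangulation structure of $G^\times$ together with Lemma~\ref{join} to turn the existence of several true faces adjacent to $\Delta$ into a small vertex cut-set, contradicting the $k$-connectivity of $G$. Since $G\in\mathcal{G}_k$ with $3\le k\le 5$ (and $G$ immovable when $k=3$), Lemmas~\ref{lemtr-3} and~\ref{lemtr-k} guarantee that $G^\times$ is a triangulation; hence every face of $G^\times$ is a triangle, and a true face is a triangle $abc$ whose three vertices are true and whose three sides $ab,bc,ca$ are non-crossing edges of $G$. I would argue by contradiction: assume $\Delta=abc$ is adjacent to $j\ge 6-k$ true faces, and produce a cut-set of size $k-1$.

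First I would analyse a single shared side. Suppose the face of $G^\times$ across $ab$ is a true face $\Delta'=abd$. As $\Delta,\Delta'$ share the non-crossing edge $ab$ with $c\in\partial(\Delta)\setminus\partial(\Delta')$ and $d\in\partial(\Delta')\setminus\partial(\Delta)$, Lemma~\ref{join} forces $cd\in E(G)$. Now $cd$ cannot cross $ab$ (because $ab$ is non-crossing), so $cd$ lies outside the $4$-cycle $C=c\text{-}a\text{-}d\text{-}b$ bounding the ``diamond'' $\Delta\cup ab\cup\Delta'$, and by $1$-planarity it is crossed at most once. Together with the non-crossing sides $ca,ad$, the edge $cd$ encloses a lens-shaped region $R$ at the apex $a$. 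The crucial observation is that any path leaving $R$ can exit only through a boundary vertex ($a,c$, or $d$) or across $cd$: nothing can cross the non-crossing sides $ca,ad$, and $cd$ is crossed by at most one edge $xy$, reaching a single outside vertex, where $\{c,d,x,y\}\cong K_4$ by Lemma~\ref{K_4}. Consequently, if $R$ contains a true vertex then $\{a,c,d,x\}$ separates it from the rest of $G$; and if $R$ contains none, a short rotation count around $a$ gives $\deg_G(a)=4$ with $N_G(a)=\{b,c,d,x\}$, so $N_G(a)$ is itself a cut. Either way a single true neighbour already forces a cut of size $4$, impossible for $k=5$; for $k=4$ only the degree-$4$ configuration survives (a $4$-cut being permitted), which isolates the unique admissible local picture.

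Next I would iterate this over the $j$ shared sides of $\Delta$. Each further true neighbour seals one more side of $\Delta$ with an empty triangle and its apex chord, tightening the confinement around the vertices of $\Delta$ and removing one more escape vertex; carrying the bookkeeping through shows that $j$ adjacent true faces force a cut-set of size $5-j$. Taking $j=6-k$ then yields a $(k-1)$-cut, contradicting the $k$-connectivity of $G$ (for $k=3$ the resulting $2$-cut contradicts $3$-connectivity, with immovability already supplying the triangulation via Lemma~\ref{lemtr-3}). The hypotheses $n\ge5$ for $k=3$ and $n\ge6$ for $k\ge4$ are used exactly to ensure that both the region cut off and its complement still contain a vertex, so that the separating set is a genuine cut. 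Hence $\Delta$ is adjacent to at most $5-k$ true faces.

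The main obstacle I anticipate is the geometric case analysis hidden in ``tightening the confinement'': one must control, for every shared side simultaneously, whether the corresponding apex chord is crossing or non-crossing, on which side of its diamond it is routed, and whether the enclosed lens holds a true vertex or merely the forced crossing. The non-crossing case tends to yield an even smaller (hence stronger) cut, so the delicate situations are the crossing configurations, where the cut must be assembled from the shared-edge endpoints, the apexes, and the at-most-one external vertex reached across each chord; verifying that these sets have exactly size $5-j$, and that the structures forced by different sides do not overlap inconsistently, is where the real work lies.
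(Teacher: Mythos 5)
Your single-side analysis (one true neighbour across a shared edge forces, via Lemma~\ref{join}, an apex chord whose lens region yields either a vertex of degree at most $4$ or a cut-set of size at most $4$) is essentially the paper's argument for $k=5$, though even there you are imprecise about which endpoint of the edge crossing the chord belongs to the cut: if the only vertex inside the lens is the interior endpoint of that crossing edge, the set $\{a,c,d,x\}$ you name separates nothing, and one must instead put the \emph{exterior} endpoint into the cut (or observe that the interior endpoint has degree at most $4$), which is exactly the case split the paper performs with $t$ and $t'$.

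The genuine gap is in the cases $k=3,4$, i.e.\ when $\Delta$ has two or three true neighbours. Your claim that ``$j$ adjacent true faces force a cut-set of size $5-j$'' is asserted, not proved, and the iteration you sketch (sealing the sides of $\Delta$ one at a time with independent lenses) misses the ingredient that actually drives the argument: two true neighbours $\Delta_1,\Delta_2$ of $\Delta$ meet $\Delta$ along edges sharing a vertex $u$, the union $\Delta\cup\Delta_1\cup\Delta_2$ is a pentagonal disk, and the two apex chords $xw$ and $yv$ supplied by Lemma~\ref{join} have interleaving endpoints on its boundary, hence are \emph{forced to cross each other}. That single crossing $\alpha$ is what produces the $3$-cut $\{x,u,y\}$ for $k=4$ (using $n\ge 6$), and for $k=3$ it does \emph{not} produce a $2$-cut at all: the contradiction there is that the fake face bounded by $v\alpha wv$ sits on the far side of the edge $vw$ of $\Delta$, where the third true neighbour $\Delta_3$ was assumed to be. So for $j=3$ your bookkeeping target (a $2$-cut) is not what the configuration yields, and no argument is given that it could. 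Since you yourself flag that ``verifying that these sets have exactly size $5-j$\dots is where the real work lies,'' the proposal leaves the $k=3$ and $k=4$ statements unproved.
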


\begin{proof}
By Lemmas~\ref{lemtr-3} and~\ref{lemtr-k}, we know that $G^\times$ is a triangulation, 
implying that 
for each face $F$ of $G^\times$,
$|\partial(F)|=3$ and $F$ 
is adjacent to precisely three other faces of $G^\times$. 
The following claim follows directly.

\setcounter{claim}{0}

\begin{claim}\label{clfake}
{\it Assume that $c$ is a crossing point between edges $v_0v_2$ and $v_1v_3$ in $G$.  Then, in $G^\times$, $c$ is enclosed by four fake faces bounded by the closed curves $v_icv_{i+1}v_i$ , where $i=1,2,3,4$ and the indices are taken modulo 4.}
\end{claim}

Assume that $\partial(\Delta)=\{u,v,w\}$.

 (i).   Assume that $k=3$. Suppose, to the contrary, that $\Delta$ is adjacent to three true faces $\Delta_1$, $\Delta_2$
and $\Delta_3$ of $G^\times$. Note that two adjacent faces share two vertices and one edge in $G^\times$. Hence, we can assume that $\partial(\Delta_1)=\{u,v,x\},  \partial(\Delta_2)=\{u,w,y\}$,  and $\partial(\Delta_3)=\{v,w,z\}$. Without loss of generality, assume that $x,y,z$ lie outside $\Delta$. We first claim that $x=y=z$ cannot happen; 
otherwise, it can be easily verified that $G$ is a complete graph $K_4$ formed by $u,v,w$ and $x(=y,z)$,  
contradicting the condition 
$n\ge 5$.  
Without loss of generality,  assume that $x\neq y$. By Lemma~\ref{join},  it follows that $xw, yv\in E(G)$. As $\Delta$, $\Delta_1$ and  $\Delta_2$ all are true faces, edges $xw$ and $yv$ must lie outside region bounded by the closed curve $xuywvx$, forcing them to cross at a point $\alpha$ (as illustrated in Figure~\ref{fig2} (I)).  By Claim~\ref{clfake}, the closed curve $v\alpha wv$ bounds a fake face of $G^\times$. Clearly, this fake face is adjacent to $\Delta$, contradicting the assumption that $\Delta$ is adjacent only to true faces.

(ii).  Assume that $k=4$ and $n\ge 6$.
Suppose, to the contrary, that $\Delta$  is adjacent to two true faces $\Delta_1$ and $\Delta_2$ of $G^\times$. 
Likewise,  we can assume that $\partial(\Delta_1)=\{u,v,x\}$ and $\partial(\Delta_2)=\{u,w,y\}$, where $x,y$ lie outside $\Delta$. 
If $x=y$, then $deg_G(u)=3$, contradicting 
the condition that $k=4$.  
Thus, $x\neq y$.  
Lemma~\ref{join} implies that $xw,yv\in E(G)$.  Furthermore,  edges $xw$ and $yv$ must cross at a point $\alpha$ outside the region bounded by the closed curve $xuywvx$, see Figure~\ref{fig2} (I). By Claim~\ref{clfake}, the closed curves $x\alpha vx$,  $v\alpha wv$ and $y\alpha wy$ bound three fake faces of $G^\times$, respectively. 
Since $n\ge 6$, 
$G$ contains  vertices outside the region bounded by the closed curve $xuy\alpha x$, 
implying that $\{x,u,y\}$ 
is a cut-set of $G$,  
which contradicts the condition that
$G$ is $k$-connected, where $k=4$.

(iii). Assume that $k\ge 5$.  
Suppose, to the contrary, that $\Delta$ is  adjacent to one true face $\Delta_1$ of $G^\times$. 
Similarly, assume that $\partial(\Delta_1)=\{u,v,x\}$, where $x$ lies outside $\Delta$. Lemma~\ref{join} implies that $xw\in E(G)$ and $xw$ must lie outside the region bounded by the closed curve $xuwvx$, see Figure~\ref{fig2} (II).  
We first claim that $xw$ must be a crossing edge in $G$. 
Otherwise, $n\ge k+1\ge 6$
implies that either 
$\{x,v,w\}$ or $\{x,u,w\}$ 
is a cut-set of $G$, 
depending on whether the interior of the region bounded by the closed curve $xuwx$ contains no vertices or vertices, respectively.
 Assume that $xw$  crosses edge $tt'$  at point $\alpha$ in $G$, see Figure~\ref{fig2} (III).  
We now claim that $t\neq u$.  
Otherwise, by Claim~\ref{clfake}, the closed curves $u\alpha wu$ and $u\alpha xu$ bound two fake faces of $G^\times$, which implies that $deg_G(u)=4$, contradicting the condition that 
$G$ is $k$-connected, where $k\ge 5$.

Two cases remain to be considered depending on whether  $t'=v$ or $t'\neq v$.

If  $t'=v$, by Claim~\ref{clfake}, the closed curves $x\alpha vx$ and $w\alpha vw$ bound two fake faces of $G^\times$, which implies that $deg_G(v)=4$
and thus $N_G(v)$ is a cut-set of $G$.
If $t'\neq v$, then $\{x,u,w,t'\}$ forms a cut-set of $G$.
Both cases  contradict 
the condition that 
$G$ is $k$-connected, where $k\ge 5$.
\end{proof}

\begin{figure}[htp]
\centering
\includegraphics[width=0.9\textwidth]{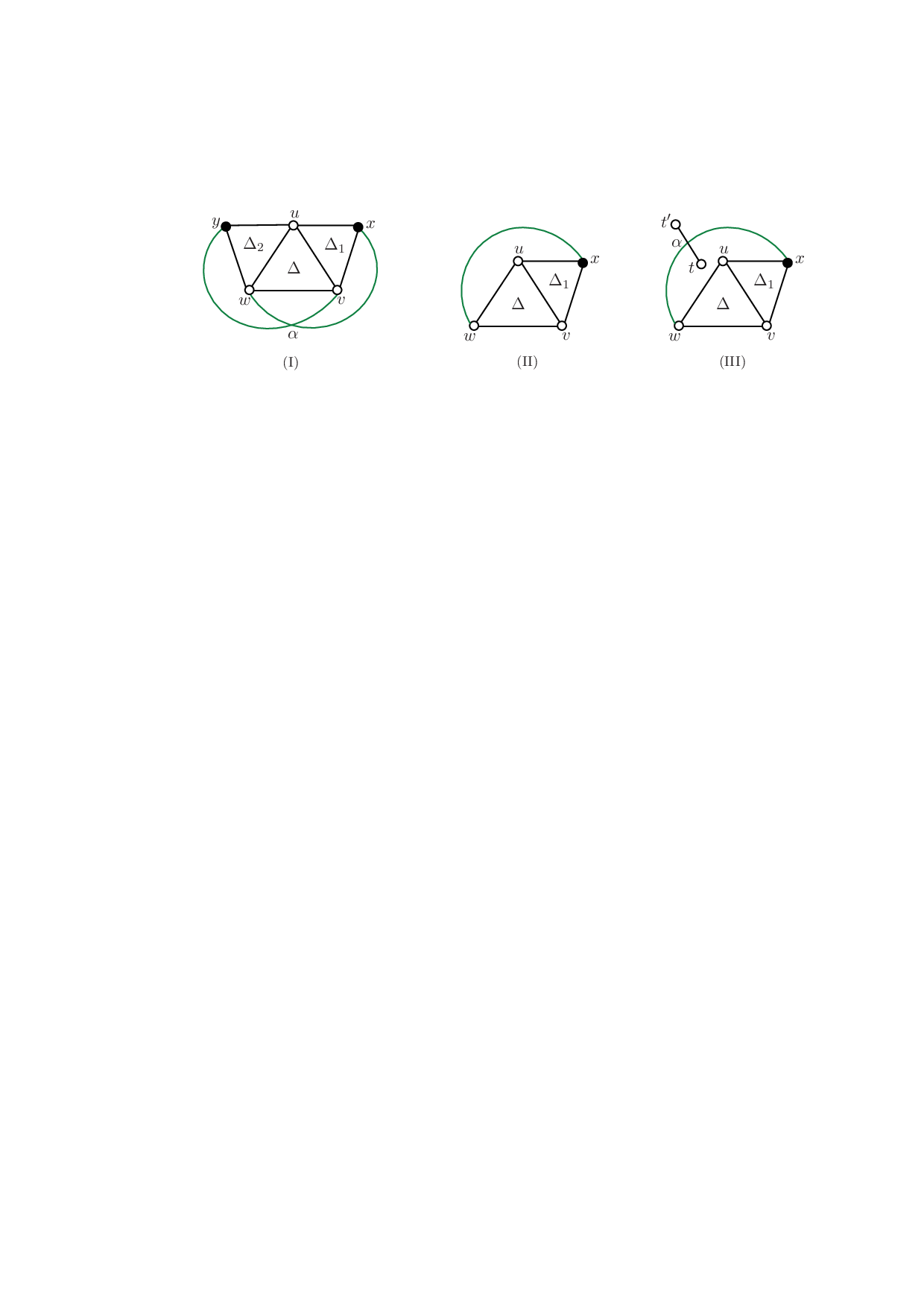}
\caption{Auxiliary graphs for proving Lemma~\ref{trface}}
\label{fig2} 
\end{figure}

\setcounter{corollary}{9}
\begin{corollary}\label{adjacent-vertex}
Let $G\in \mathcal{G}_k$ and $n=|V(G)|\ge 5$,
where $3\le k\le 5$. 
Then,  each blue vertex in $G^*_P$ 
is adjacent to at most  $5-k$ blue vertices in $G^*_P$, 
where $n\ge 6$ when $k\ge 4$.
\end{corollary}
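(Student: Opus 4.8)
The plan is to deduce the corollary directly from Lemma~\ref{trface} by translating its statement about face-adjacency in $G^\times$ into the corresponding statement about vertex-adjacency in the dual graph $G_P^*$. Recall from the definitions that each blue vertex of $G_P^*$ corresponds to a blue face of $G_P$, and that each blue face of $G_P$ is precisely a true face of $G^\times$, i.e.\ a member of $\setf_{tr}(G^\times)$. Moreover, two vertices of $G_P^*$ are adjacent exactly when their associated faces of $G_P$ share an edge. Hence the whole argument reduces to understanding when two blue faces of $G_P$ share an edge, and to relating this to adjacency of the corresponding true faces in $G^\times$.

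First I would observe that, since $3\le k\le 5$, Lemmas~\ref{lemtr-3} and~\ref{lemtr-k} guarantee that $G^\times$ is a triangulation, so every true face is a triangle whose three bounding edges are non-crossing edges of $G$. The key point is then that such an edge is retained when passing from $G^\times$ to $G_P$, since only one edge from each crossing pair is deleted and a non-crossing edge belongs to no crossing pair. Consequently, if two true faces $\Delta_1,\Delta_2$ of $G^\times$ share an edge, that same non-crossing edge survives in $G_P$ and separates the two corresponding blue faces, so they are adjacent in $G_P$. Conversely, if two blue faces of $G_P$ share an edge, that edge lies on the boundary of a true face and is therefore non-crossing, hence it is also an edge of $G^\times$ separating the two associated true faces, which are thus adjacent in $G^\times$. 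This establishes a bijection between blue--blue adjacencies in $G_P^*$ and true--true adjacencies in $G^\times$.

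With this correspondence in hand the bound is immediate: by Lemma~\ref{trface} each true face of $G^\times$ is adjacent to at most $5-k$ true faces, so each blue vertex of $G_P^*$ is adjacent to at most $5-k$ blue vertices, under the stated hypotheses ($n\ge 5$, with $n\ge 6$ when $k\ge 4$, which carry over unchanged from the lemma). The only point requiring care---the main, and only minor, obstacle---is ruling out \emph{spurious} blue--blue adjacencies in $G_P$ that are not inherited from $G^\times$; this is exactly what the observation about non-crossing boundary edges handles, since a blue face cannot acquire a new neighbour during the merging of adjacent fake faces into red faces (such merging only affects edges incident with fake vertices, whereas the boundary of a blue face contains none). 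I therefore expect the proof to be short, consisting essentially of this duality translation followed by a direct appeal to Lemma~\ref{trface}.
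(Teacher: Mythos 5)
Your proposal is correct and follows essentially the same route as the paper: the paper's proof simply asserts the one-to-one correspondence between blue vertices of $G_P^*$ and true faces of $G^\times$ together with the equivalence of their adjacencies, and then invokes Lemma~\ref{trface}. Your additional justification of that correspondence (boundary edges of true faces are non-crossing, hence survive in $G_P$, and the merging of fake faces cannot create spurious blue--blue adjacencies) is a valid and slightly more careful elaboration of what the paper leaves as an observation.
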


\begin{proof}
Observe that the blue vertices of $G_P^*$ correspond one-to-one to the true faces of $G^\times$. Furthermore, two blue vertices of $G_P^*$ are adjacent if and only if  the true faces corresponding to them are adjacent in $G^\times$. Therefore, this is a direct corollary of Lemma~\ref{trface}.
\end{proof}

A drawing of a graph actually implies a rotation system. 
The rotation at a vertex is 
an order list of its incident edges 
in the clockwise direction.
In a drawing, two edges incident with some
vertex $w$ are said to be {\em consecutive} if they appear in sequence in the cyclic
ordering at $w$.  
Let $c_G(v)$ denote the number of crossing edges incident with $v\in V(G)$ in a 1-plane graph $G$. 
The following result provides bounds for  $c_G(v)$.

\begin{lemma}\label{trcredge}
Let $G\in \setg_k$, 
where $k\ge 5$. 
Then, for each vertex $v\in V(G)$, 
\begin{equation*}\label{le9-e1}
\left\lceil
\frac{deg_G(v)}{3}
\right\rceil
\le  c_G(v)
\le \left\lfloor\frac{deg_G(v)}{2}\right\rfloor.
\end{equation*}
\end{lemma}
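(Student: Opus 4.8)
The plan is to read off both inequalities from the local structure of the planarization $G^\times$ around $v$. Since $k\ge 5$ in particular forces $G$ to be $4$-connected, Lemma~\ref{lemtr-k} guarantees that $G^\times$ is a triangulation, hence a simple $3$-connected maximal plane graph. I would first record that $deg_{G^\times}(v)=deg_G(v)$: every edge of $G$ incident with $v$ contributes exactly one edge at $v$ in $G^\times$, namely an edge to a true vertex if it is non-crossing and an edge to a fake vertex (the crossing point) if it is crossing. Consequently $c_G(v)$ equals the number of fake vertices adjacent to $v$ in $G^\times$. Writing $d=deg_G(v)$ and listing the neighbours of $v$ in $G^\times$ in rotation order as a cyclic sequence $w_1,w_2,\dots,w_d$, the $3$-connectivity of the triangulation $G^\times$ ensures that this link is a cycle, so each pair of consecutive neighbours $w_i,w_{i+1}$ (indices mod $d$) spans a triangular face $vw_iw_{i+1}$ of $G^\times$. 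I then mark each $w_i$ as T (true) or F (fake), so that $c_G(v)$ is exactly the number of F's.

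For the upper bound I would use that two fake vertices are never adjacent in $G^\times$, since each fake vertex has only true neighbours. If $w_i$ and $w_{i+1}$ were both F, then the edge $w_iw_{i+1}$ of the triangle $vw_iw_{i+1}$ would join two fake vertices, a contradiction. Hence no two consecutive entries are F, i.e. the F's form an independent set in the cycle $C_d$, so there are at most $\lfloor d/2\rfloor$ of them, giving $c_G(v)\le\lfloor deg_G(v)/2\rfloor$.

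For the lower bound I would invoke Lemma~\ref{trface}(iii) in the case $k\ge 5$ (where $5-k\le 0$), which asserts that no true face of $G^\times$ is adjacent to another true face. The face $vw_iw_{i+1}$ is true exactly when $w_i$ and $w_{i+1}$ are both T (as $v$ is true), and two such faces $vw_{i-1}w_i$ and $vw_iw_{i+1}$ are adjacent along the edge $vw_i$; therefore three consecutive entries $w_{i-1},w_i,w_{i+1}$ can never all be T. Combining the two constraints, no two consecutive F's and no three consecutive T's, I would partition the cyclic sequence into $m:=c_G(v)$ blocks, each consisting of one F followed by a maximal run of T's of length $1$ or $2$. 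Thus each block has length $2$ or $3$, so $d\le 3m$, which yields $m\ge\lceil d/3\rceil=\lceil deg_G(v)/3\rceil$.

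The two independence/block counts are routine; the only real content lies in the two local translations, and the step I expect to require the most care is deriving ``no three consecutive true neighbours'' from Lemma~\ref{trface}. This uses that the neighbourhood of $v$ is genuinely a cycle, so that consecutive true triangles really share the edge $vw_i$, which is precisely where the $3$-connectivity of $G^\times$ enters; one should also check the small-degree cases (for instance $d=5$, which is forced by $k\ge 5$) to confirm that the cyclic block decomposition is valid and that the two bounds can coincide.
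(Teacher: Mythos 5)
Your proof is correct and follows essentially the same route as the paper: both arguments work in the rotation at $v$ inside the triangulation $G^\times$, derive the upper bound from the fact that two crossing edges (fake neighbours) cannot be consecutive, and derive the lower bound from Lemma~\ref{trface} via the observation that three consecutive non-crossing edges would create two adjacent true faces. Your reformulation in terms of the T/F labelling of the link cycle and the block count is just a cleaner packaging of the paper's counting of consecutive crossing and non-crossing edges.
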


\begin{proof}
By Lemmas~\ref{lemtr-k} and \ref{trface},  we know that $G^\times$ is a triangulation and there do not exist two adjacent true faces in $G^\times$.  Let $v\in V(G)$  with $deg_G(v)=l$ and $c_G(v)=s$. Since any two consecutive non-crossing edges of $G$ incident with $v$ must be on the boundary of the same true face of $G^\times$, we claim that no three consecutive non-crossing edges of $G$ incident with $v$.
Otherwise, the existence of two adjacent true faces in $G^\times$ would contradict Lemma~\ref{trface}. 

Clearly, there are $l-s$ non-crossing edges of $G$ incident with $v$. One can easily observe that the $s$ crossing edges of $G$ incident with $v$ can divide $l-s$ non-crossing edges of $G$ into at most $s$ parts, and one of the parts contains at least $\lceil\frac{l-s}{s}\rceil$ non-crossing edges. These non-crossing edges in the same part must be consecutive in $G$.  Thus, $\lceil\frac{l-s}{s}\rceil\le 2$. This implies that $s\ge \lceil\frac{l}{3}\rceil$.

We now claim that there are no two consecutive crossing edges of $G$ incident with $v$.
Otherwise, these two crossing points on the two consecutive crossing edges would appear on the boundary of a face of size at least 4 in $G^\times$, violating the triangulation property. Thus, $v$ is incident with at most $\lfloor\frac{l}{2}\rfloor$ crossing edges of $G$, i.e., $s\le \lfloor\frac{l}{2}\rfloor$.
\end{proof}

A cycle $C$ in a plane graph $G$ is called a {\it separating cycle} if both its interior and exterior contain vertices. A graph $G$ is called {\it $(a,b)$-regular} if the degree of each vertex in $G$ is either $a$ or $b$.
In this article, we will apply 
	the following results on a triangulation
due to Baybars \cite{IB}, Etourneau~\cite{EE}, Hakimi and Schmeichel~\cite{SLH}.

\begin{lemma}[\cite{IB,SLH}]\label{connectivity}
Let $G$ be a triangulation and let $S\subset V(G)$ be a minimal cut-set
of $G$. Then $S$ induces a separating cycle in $G$.
\end{lemma}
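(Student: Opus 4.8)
The plan is to prove that in a triangulation $G$, any minimal cut-set $S$ induces a separating cycle. First I would recall the two structural facts I need about triangulations: every face is a triangle (so $G$ is a maximal plane graph on at least three vertices), and the neighborhood structure around each vertex is a cycle. The statement has two parts bundled together: that the components of $G-S$ are ``separated'' in the planar sense (some lie in the interior and some in the exterior of a curve), and that $G[S]$ is in fact a cycle. I would treat the cycle claim as the core, since once $G[S]$ is shown to be a single cycle, the separating property follows from the Jordan curve theorem applied to that cycle.

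\medskip

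The key steps I would carry out are as follows. Since $S$ is a minimal cut-set, for each $v\in S$ the set $S-v$ is not a cut-set, so $G-(S-v)$ is connected; this minimality is the hypothesis I would lean on most heavily. Let $A$ and $B$ be two distinct components of $G-S$. I would first argue that every vertex of $S$ has a neighbor in $A$ and a neighbor in $B$: if some $v\in S$ missed, say, $A$ entirely, I would try to show $S-v$ still separates $A$ from $B$, contradicting minimality. Next, exploiting that $G$ is a triangulation, I would examine the rotation (the cyclic order of edges) around each $v\in S$. The neighbors of $v$ together with $v$'s incident edges partition the plane around $v$ into triangular faces, so the neighbors of $v$ form a cycle $C_v$ (the link of $v$); as one traverses $C_v$, the vertices belonging to $A$, to $B$, and to $S$ appear in arcs, and each transition from an $A$-vertex to a $B$-vertex in this cyclic order must pass through a vertex of $S$ (since $A$ and $B$ are in different components, no edge joins them directly, and in a triangulation consecutive neighbors of $v$ are adjacent). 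This local analysis is what forces $v$ to have exactly two neighbors in $S$ of the right kind, giving $v$ degree two inside $G[S]$ and pinning down that $G[S]$ is a disjoint union of cycles.

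\medskip

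To finish, I would rule out that $G[S]$ is more than one cycle. If $G[S]$ consisted of two or more vertex-disjoint cycles, then one of these cycles would, by the Jordan curve theorem, enclose the others together with one of the components $A$ or $B$; but then a proper subset of $S$ — namely a single cycle of $G[S]$ — would already separate the plane, so some vertices of $S$ would be removable without restoring connectivity, contradicting minimality. Hence $G[S]$ is a single cycle $C$, and since $C$ has vertices of both $A$ and $B$ on opposite sides (one component inside $C$, the other outside), $C$ is a separating cycle.

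\medskip

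I expect the main obstacle to be the local rotation argument around each vertex of $S$: making rigorous the claim that, in the cyclic link of $v$, the vertices from different components $A,B$ must be ``insulated'' from each other by $S$-vertices, and converting this into the precise statement that each $v\in S$ has exactly two $S$-neighbors that lie on a common cycle. Care is needed because a neighbor of $v$ in $S$ might belong to a different piece of the boundary, so I would have to track which $S$-neighbors actually bound the same pair of components. The minimality hypothesis is the essential lever at each stage: it is what prevents a vertex of $S$ from being superfluous, and it is exactly what collapses the union-of-cycles possibility down to a single separating cycle.
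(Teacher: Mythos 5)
First, a point of comparison: the paper does not prove this lemma at all --- it is imported as a known result of Baybars and of Hakimi--Schmeichel --- so there is no internal proof to measure yours against, and I can only assess the proposal on its own terms. Your skeleton is the standard one, and the early steps are sound: minimality of $S$ gives that every vertex of $S$ has a neighbour in every component of $G-S$ (otherwise that vertex could be dropped from $S$ and the remainder would still separate); the link of each vertex of a triangulation is a cycle; consecutive vertices of that link are adjacent in $G$, so in the cyclic order around $v\in S$ the neighbours lying in distinct components of $G-S$ must be insulated from one another by vertices of $S$; hence each $v\in S$ has at least two neighbours in $S$. You also correctly identify where the difficulty sits.

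The genuine gap is the jump from ``at least two'' to ``exactly two.'' The local rotation argument does not force $v$ to have exactly two $S$-neighbours: the link of $v$ can perfectly well read $a_1\,s_1\,a_2\,s_2\,b_1\,s_3$ with $a_i\in A$, $b_1\in B$, $s_j\in S$, which is consistent with everything you have proved locally but gives $v$ three $S$-neighbours. So $2$-regularity of $G[S]$, and with it the ``disjoint union of cycles'' structure on which your final paragraph rests, is not established. (That final paragraph has its own problem: several disjoint cycles need not be nested, so ``one encloses the others together with $A$ or $B$'' is not automatic, and the contradiction you draw presupposes that a single cycle of $G[S]$ already has vertices of $G$ on both sides --- which is the separating property still to be proved.) The standard repair reverses the logical order: from minimum degree at least $2$, $G[S]$ contains \emph{some} cycle $C$; each component of $G-S$ is connected and disjoint from $C$, hence lies entirely inside or entirely outside $C$; one then shows (this is where the insulation argument must be deployed globally, not just at one vertex) that $C$ can be chosen with two components of $G-S$ on opposite sides, so that $V(C)$ is itself a cut-set; minimality then forces $V(C)=S$; and a last appeal to minimality excludes chords of $C$ in $G[S]$, since a chord splits one side of $C$ into two regions and the shorter cycle bounding the region that contains a component of $G-S$ would be a cut-set on a proper subset of $S$. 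In that scheme ``each vertex of $S$ has exactly two $S$-neighbours'' is a corollary of the conclusion, not an intermediate step, and your proposal as written cannot reach the conclusion without it.
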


\begin{lemma}[\cite{EE}]\label{5connectivity}
If $G$ be a triangulation and is  (5,6)-regular, then $G$ is 5-connected.
\end{lemma}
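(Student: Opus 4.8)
The plan is to argue by contradiction, reducing the statement to the nonexistence of short separating cycles and then excluding those with the degree restriction. First I would record two elementary consequences of the hypotheses. Since $G$ is a triangulation, Euler's formula gives $\sum_{v}(6-\deg_G(v))=12$; as every degree is $5$ or $6$, the summand equals $1$ at degree-$5$ vertices and $0$ elsewhere, so $G$ has exactly twelve vertices of degree $5$ and in particular $\delta(G)=5$, whence $\kappa(G)\le\delta(G)=5$. On the other hand, every triangulation on at least four vertices is $3$-connected, so $\kappa(G)\ge 3$. It therefore suffices to exclude the cases $\kappa(G)=3$ and $\kappa(G)=4$.

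Suppose $\kappa(G)=k\in\{3,4\}$ and let $S$ be a minimum cut-set, so $|S|=k$. A minimum cut-set is in particular minimal, so Lemma~\ref{connectivity} guarantees that $G[S]$ is a separating cycle $C$ of length $k$; being induced, $C$ has no chords. Drawn on the sphere, $C$ bounds two closed triangulated disks, an ``interior'' one $H$ with $p\ge 1$ vertices off $C$ and an ``exterior'' one $H'$ with $q\ge 1$ vertices off $C$; write $\deg_H$ and $\deg_{H'}$ for degrees within these disks. For each $v\in C$ the two incident cycle edges are shared by both disks, so $\deg_G(v)=\deg_H(v)+\deg_{H'}(v)-2$, and the chordlessness of $C$ together with $p,q\ge 1$ forces $\deg_H(v)\ge 3$ and $\deg_{H'}(v)\ge 3$ (a value $2$ would require the chord $v_{i-1}v_{i+1}$, or, for $k=3$, would make $C$ bound an empty disk).

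The main tool is the combinatorial Gauss--Bonnet identity for a triangulated disk, namely $\sum_{\text{interior}}(6-\deg_H(v))+\sum_{v\in C}(4-\deg_H(v))=6$, applied to $H$ and to $H'$. Using $5\le\deg_G(v)\le 6$ one first disposes of the degenerate configurations: a single interior vertex on one side of a triangle (respectively of a $4$-cycle) would be adjacent to all boundary vertices and hence have degree $3$ (respectively $4$), contradicting $\delta(G)=5$, so each side must carry at least two vertices. The heart of the argument is then a local analysis of the fans of triangles incident with the vertices of $C$: starting from an edge of $C$ I would trace the successive triangles that fill each disk and show that the degree budget $\deg_G\in\{5,6\}$ cannot be met simultaneously at every vertex of $C$ and at the interior vertices abutting them, equivalently that completing the triangulation of a disk forces some vertex of degree at most $4$ or at least $7$.

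I expect this last step to be the main obstacle, since the global pentagon count is automatically consistent (summing the two disk identities merely recovers $\sum_v(6-\deg_G(v))=12$), so the contradiction must come from the finer planar structure rather than from counting alone. The case $k=4$ is the harder one: here the chordlessness supplied by Lemma~\ref{connectivity} is essential, for a chord would either reduce the configuration to the triangle case or contradict that $C$ is induced, and one must examine how the four triangle-fans interact across the two would-be diagonals of $C$. Carrying out this bounded but genuinely multi-case verification produces the required contradiction, so no separating $3$- or $4$-cycle exists; hence $\kappa(G)\ge 5$, and combined with $\kappa(G)\le 5$ we conclude that $G$ is $5$-connected, as claimed in \cite{EE}.
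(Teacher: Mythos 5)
The paper does not prove this lemma at all: it is quoted verbatim from Etourneau's 1975 paper \cite{EE}, so there is no internal proof to compare against. Judged on its own terms, your proposal sets up a sound and standard framework --- the Euler-formula count of twelve degree-$5$ vertices giving $\kappa(G)\le 5$, the reduction via Lemma~\ref{connectivity} of a hypothetical connectivity $k\in\{3,4\}$ to an induced separating $k$-cycle, the degree identity $\deg_G(v)=\deg_H(v)+\deg_{H'}(v)-2$ on the cycle, and the combinatorial Gauss--Bonnet identity for each triangulated disk are all correct --- but it stops exactly where the lemma's actual content begins. You yourself observe that the global pentagon count is consistent with the existence of a separating $3$- or $4$-cycle (summing the two disk identities just recovers $\sum_v(6-\deg_G(v))=12$), so no counting argument of the kind you have written down can finish the job; and the step that would finish it, the ``local analysis of the fans of triangles incident with the vertices of $C$,'' is only announced, not performed. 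Phrases such as ``I expect this last step to be the main obstacle'' and ``carrying out this bounded but genuinely multi-case verification produces the required contradiction'' are an admission that the decisive contradiction has not been derived.

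Concretely, what is missing is a proof that a $(5,6)$-regular triangulation admits no separating triangle and no induced separating quadrilateral. For $k=3$ the constraint is that each $v_i$ satisfies $\deg_H(v_i)+\deg_{H'}(v_i)\le 8$ with both summands in $\{3,4,5\}$, and one must trace how the interior neighbours of $v_1,v_2,v_3$ (paths in the links of the $v_i$) fit together inside each disk to force a vertex of degree at most $4$ or at least $7$; for $k=4$ the analogous analysis must additionally exploit chordlessness. Until that case analysis is actually written out (or the result is simply cited, as the paper does), the proposal is a plan rather than a proof. I would recommend either completing the fan analysis explicitly for both values of $k$, or replacing the whole argument by the citation to \cite{EE}.
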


\begin{lemma}[\cite{SLH}]\label{4connectivity}
Let $G$ be a triangulation with vertex degree sequence $d_1\ge d_2\ge \cdots
\ge d_p$,  where $d_p\ge 4$. If 
$  \left\lfloor\frac{7}{3}\omega(4)\right \rfloor+\omega(5)<14$,
then $G$ is $d_p$-connected, 
where $\omega(k)$ denote 
the number of integers $i$'s with $1\le i\le p$ such that $d_i=k$.
\end{lemma}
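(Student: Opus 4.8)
The plan is to argue by contradiction, reducing the connectivity question to the existence of a short separating cycle and then running a discharging argument based on Euler's formula. First I would record two standing facts about a triangulation $G$: it is $3$-connected (it has at least $6$ vertices once $d_p\ge 4$), and since $2|E(G)|=6|V(G)|-12$ its minimum degree satisfies $\delta(G)\le 5$; combined with $d_p\ge 4$ this pins down $d_p=\delta(G)\in\{4,5\}$. Assuming $G$ is \emph{not} $d_p$-connected, I would pick a minimum cut-set $S$, which is automatically minimal, so $|S|=\kappa(G)$ with $3\le \kappa(G)\le d_p-1\le 4$. By Lemma~\ref{connectivity}, $S$ induces a separating cycle $C$ of length $\ell=\kappa(G)\in\{3,4\}$, and $V(G)\setminus V(C)$ splits into a nonempty interior part $A$ and exterior part $B$.

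Next I would introduce the charge $\mu(v)=6-\deg_G(v)$. Applying Euler's formula to the disk triangulation $H_A=G[A\cup V(C)]$ (whose bounded faces are all triangles and whose outer boundary is $C$) gives $\sum_{v\in V(H_A)}(6-\deg_{H_A}(v))=2\ell+6$, and likewise for $H_B$. The decisive structural point is that every vertex of $A$ keeps its full degree, $\deg_{H_A}(v)=\deg_G(v)\ge d_p$, so interior vertices carry only a small charge, while the boundary vertices of $C$ have degree at least $3$ in $H_A$. A crude reading of this identity only gives a weighted low-degree contribution of about $6-\ell$ on each side, which is far too weak, so the real content lies in a sharper local estimate.

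The heart of the proof is therefore a local counting lemma: a disk triangulation with triangular outer boundary in which every interior vertex has degree at least $4$ (respectively at least $5$) must contain interior low-degree vertices of weighted total $\tfrac73\omega_A(4)+\omega_A(5)\ge 7$. The extremal configuration is the \emph{octahedral cap} — the octahedron with one face opened as the outer triangle — which has exactly three interior vertices, all of degree $4$, contributing precisely $\tfrac73\cdot 3=7$; this is exactly what calibrates the weight $\tfrac73$. I would prove this lemma by induction on the number of interior vertices: forcing a boundary vertex to have small degree in $H_A$ pinches off sub-triangulations, each again with a triangular boundary and minimum interior degree $\ge 4$, so that any saving in boundary charge is always repaid by additional forced low-degree interior vertices. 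The case of a quadrilateral boundary (needed when $\ell=4$) and the minimum-degree-$5$ case are handled by the same inductive scheme with the corresponding extremal caps.

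Finally, applying the local lemma to the two disjoint sides $A$ and $B$ and adding, I obtain $\tfrac73\,\omega(4)+\omega(5)\ge \tfrac73(\omega_A(4)+\omega_B(4))+(\omega_A(5)+\omega_B(5))\ge 7+7=14$; since $\omega(5)$ is an integer this survives taking the floor, giving $\lfloor\tfrac73\omega(4)\rfloor+\omega(5)\ge 14$, contrary to hypothesis. I expect the main obstacle to be exactly the local lemma, since the naive Euler count misses the target of $7$ per side by a wide margin: the genuine work is the structural induction showing that a short separating boundary really does force the octahedral-cap amount of low-degree vertices, together with the bookkeeping required to treat separating triangles and separating $4$-cycles and the two values $d_p\in\{4,5\}$ under one uniform weighting.
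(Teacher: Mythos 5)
First, note that the paper itself gives no proof of this statement: it is quoted verbatim as a result of Hakimi and Schmeichel \cite{SLH}, so there is no in-paper argument to compare yours against. Judged on its own terms, your proposal has a correct and well-organized outer shell --- the reduction $d_p=\delta(G)\in\{4,5\}$, the passage to a minimum (hence minimal) cut-set, the use of Lemma~\ref{connectivity} to obtain a separating cycle of length $3$ or $4$, the Euler identity $\sum_v(6-\deg_{H}(v))=2\ell+6$ for a triangulated disk with boundary length $\ell$, the observation that interior vertices retain their $G$-degrees, and the final step that $\tfrac73\omega(4)+\omega(5)\ge 14$ with $\omega(5)\in\mathbb{Z}$ implies $\lfloor\tfrac73\omega(4)\rfloor+\omega(5)\ge 14$. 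All of that is sound.

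The genuine gap is the ``local counting lemma,'' which you yourself identify as the heart of the matter and then do not prove. The claim that each side of the separating cycle satisfies $\tfrac73\omega_A(4)+\omega_A(5)\ge 7$ carries essentially the entire content of the theorem: as you note, the direct Euler/charge computation on one side yields only about $2\omega_A(4)+\omega_A(5)\ge 6-2\ell$ (and even after using that every boundary vertex has degree at least $3$ in $H_A$, only $2\omega_A(4)+\omega_A(5)\ge 3$ for $\ell=3$), which is far below the target of $7$ under the sharper weight $\tfrac73$. Your proposed remedy --- an induction on the number of interior vertices in which ``any saving in boundary charge is always repaid by additional forced low-degree interior vertices'' --- is not an argument but a restatement of what must be shown; it does not specify the induction hypothesis for the sub-disks that are ``pinched off,'' does not handle the interaction between the two weights $\tfrac73$ and $1$, and does not separately treat the three distinct local statements actually needed (triangular boundary with interior minimum degree $4$, triangular boundary with interior minimum degree $5$, and quadrilateral boundary with interior minimum degree $5$). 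The calibration against the octahedral cap confirms that $7$ is the right constant, but confirming tightness is not a proof of the inequality. As it stands, the proposal reduces the cited theorem to an unproved lemma of essentially equal difficulty.
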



\section{Construction of extremal graphs}
\label{sec-exmg}

In this section, we construct 
some graphs in $\setg_k$
for the purpose of 
showing the sharpness of 
the main results in this article. 

\begin{definition}
Let $F$ be a face of a plane graph $G$.   Three operations of inserting
new vertices or edges within $F$ are introduced below:
\begin{itemize}
  \item {\it $K_1$-triangulation} for
  $|\partial(F)|\ge 3$: 
  insert a new vertex $x$ inside $F$ 
  and add new edges joining $x$ to all vertices in $\partial(F)$, 
  as shown in Figure~\ref{fig30} (I);
  
  \item {\it $K_2$-triangulation}
  for $|\partial(F)|=4$: 
  insert a complete graph $K_2$ inside $F$ and add six new edges joining 
  the two vertices of $K_2$ to the four vertices in $\partial(F)$ so that $F$ is triangulated without producing any crossing,
  as shown in Figure~\ref{fig30} (II);  and 
  
  \item {\it $T_\times$-triangulation}  for $|\partial(F)|=4$: 
  insert a pair of crossing edges
  within $F$ connecting the two pair of diagonal vertices in $\partial(F)$, respectively, as shown in Figure~\ref{fig30} (III).
\end{itemize}
\end{definition}

\begin{figure}[htp]
\centering
\includegraphics[width=0.9\textwidth]{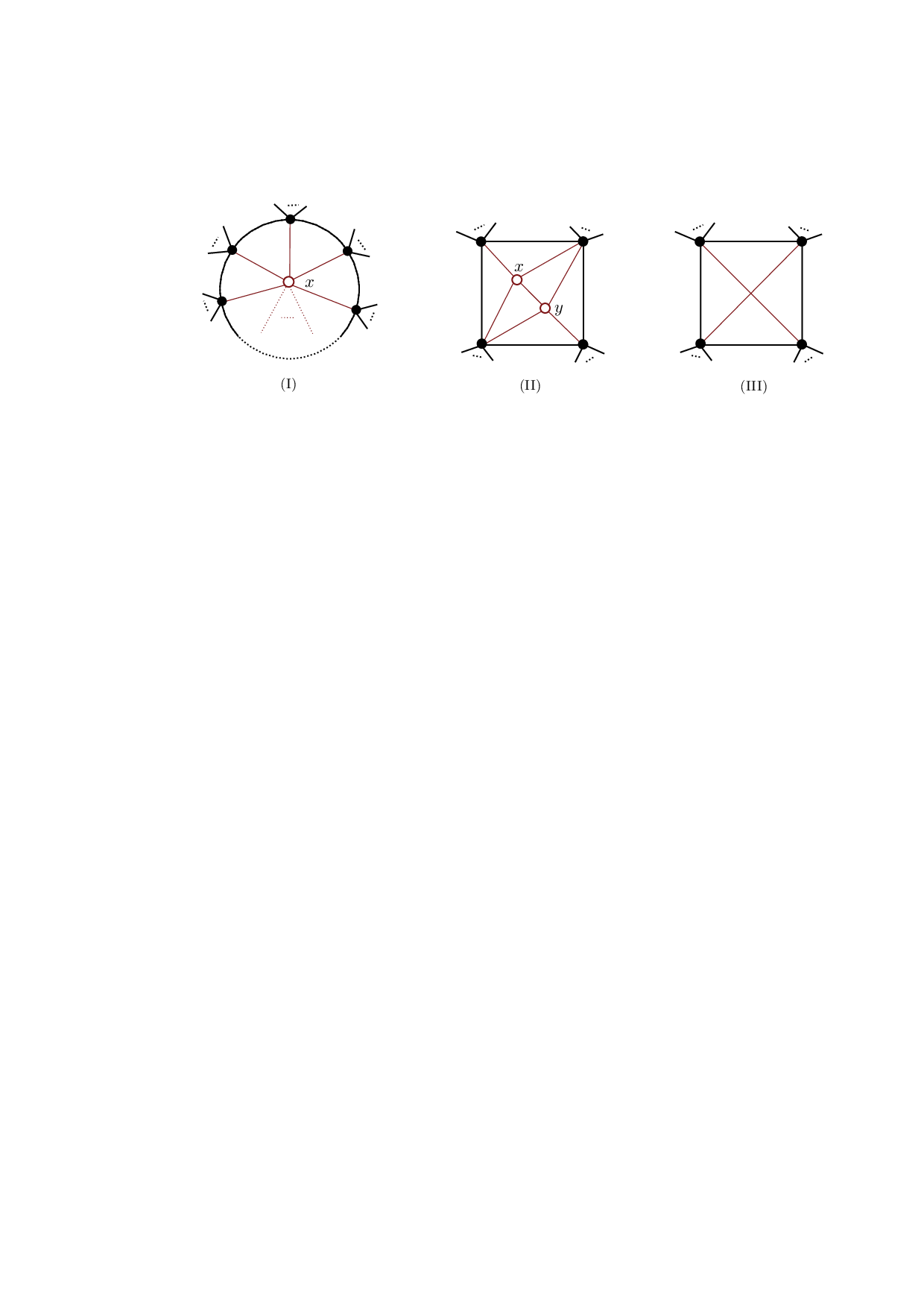}
\caption{Three triangulation operations}
\label{fig30}
\end{figure}

For $k\ge 1$, let $C_{2^{k+1}}$ be a cycle of length $2^{k+1}$ with $V(C_{2^{k+1}})=\{a^{k}_{i}|i=1,2,\cdots,2^{k+1}\}$ and $E(C_{2^{k+1}})=\{a^{k}_{i}a^{k}_{i+1}|i=1,2,\cdots,2^{k+1}\}$,
where indices are taken modulo $2^{k+1}$. 
Now we construct a plane graph $H^k$
obtained from $k$ cycles $C_{2^2},C_{2^{3}},\cdots,C_{2^{k+1}}$ ($C_{2^2}$ and $C_{2^{k+1}}$ are called {\it kernel} and {\it periphery} cycles, respectively)  as follows.  

For $k=1$, let $H^1$ be $C_4$,
  and for $k\ge 2$, let $H^k$ be the graph with  
\begin{equation*}\label{H(VK)}
V(H^k)=\bigcup_{i=1}^k V(C_{2^{i+1}}),
\end{equation*}
and 
\begin{equation*}\label{H(EK)}
E(H^k)=	
\bigcup_{i=1}^k E(C_{2^{i+1}})
	\cup 
	\bigcup_{j=1}^{k-1} 
	\bigcup_{i=1}^{2^{j+1}}\left\{ a^j_{i}a^{j+1}_{2i-2}, a^j_{i}a^{j+1}_{2i}\right\}.
\end{equation*}
where $a^j_0=a^j_j$.  The instance for $k=3$ can be seen on the left side of Figure~\ref{fig3}.

\begin{figure}[htp]
\centering
\includegraphics[width=0.9\textwidth]{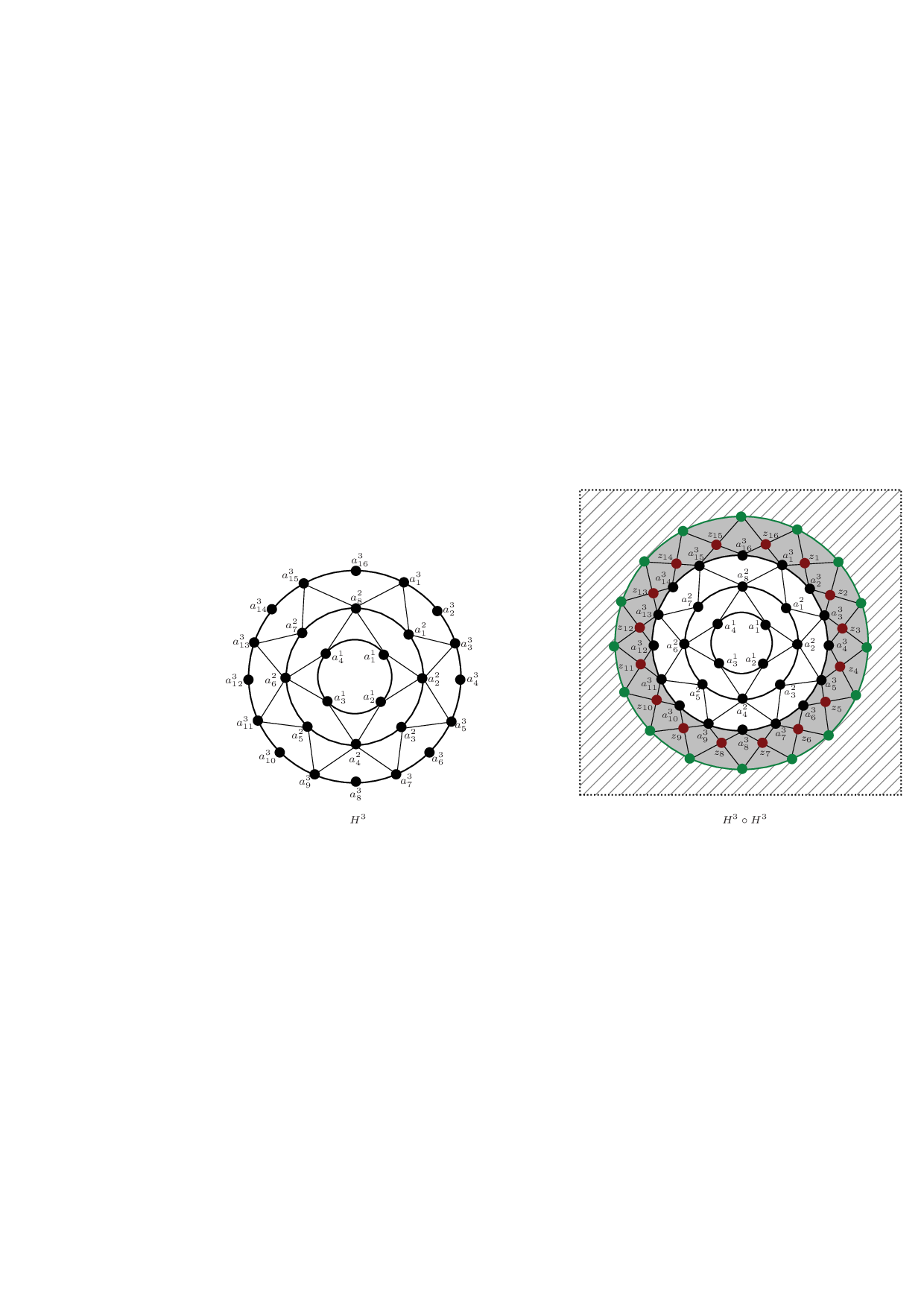}

\caption{Plane graphs $H^3$ and $H^3 \circ H^3$, where the structure outside the green circle $C_{2^{4}}$ is omitted}
\label{fig3} 
\end{figure}

We further construct a new plane graph $H^k \circ H^k$ from $H^k$ in the following way: 
First, perform a stereographic projection of $H^k$ so that the periphery cycle $C_{2^{k+1}}$ encloses a finite face $F$. Then, place  a copy of $H^k$  inside the face $F$. Finally, add $2^{k+1}$ new vertices  $z_1,z_2,\cdots,z_{2^{k+1}}$  between the two periphery cycles $C_{2^{k+1}}$, and connect them to the four surrounding vertices, see the shaded ring on the right side of Figure~\ref{fig3}.

Clearly, $H^k \circ H^k$  is a plane graph. 
Let $XH^k$ be the $1$-plane graph obtained from $H^k \circ H^k$ by applying a $T_\times$-triangulation operation to each
quadrangle  of  $H^k \circ H^k$. 
Let $YH^k$ be the $1$-graph obtained from $H^k \circ H^k$ by applying  $T_\times$-triangulation operation to 
each quadrangle and  $K_1$-triangulation operation to each triangle of  $H^k \circ H^k$. 
The instance for $k=1$ can be seen 
in Figure~\ref{fig31}.

\begin{figure}[ht]
\centering
\includegraphics[width=0.9\textwidth]{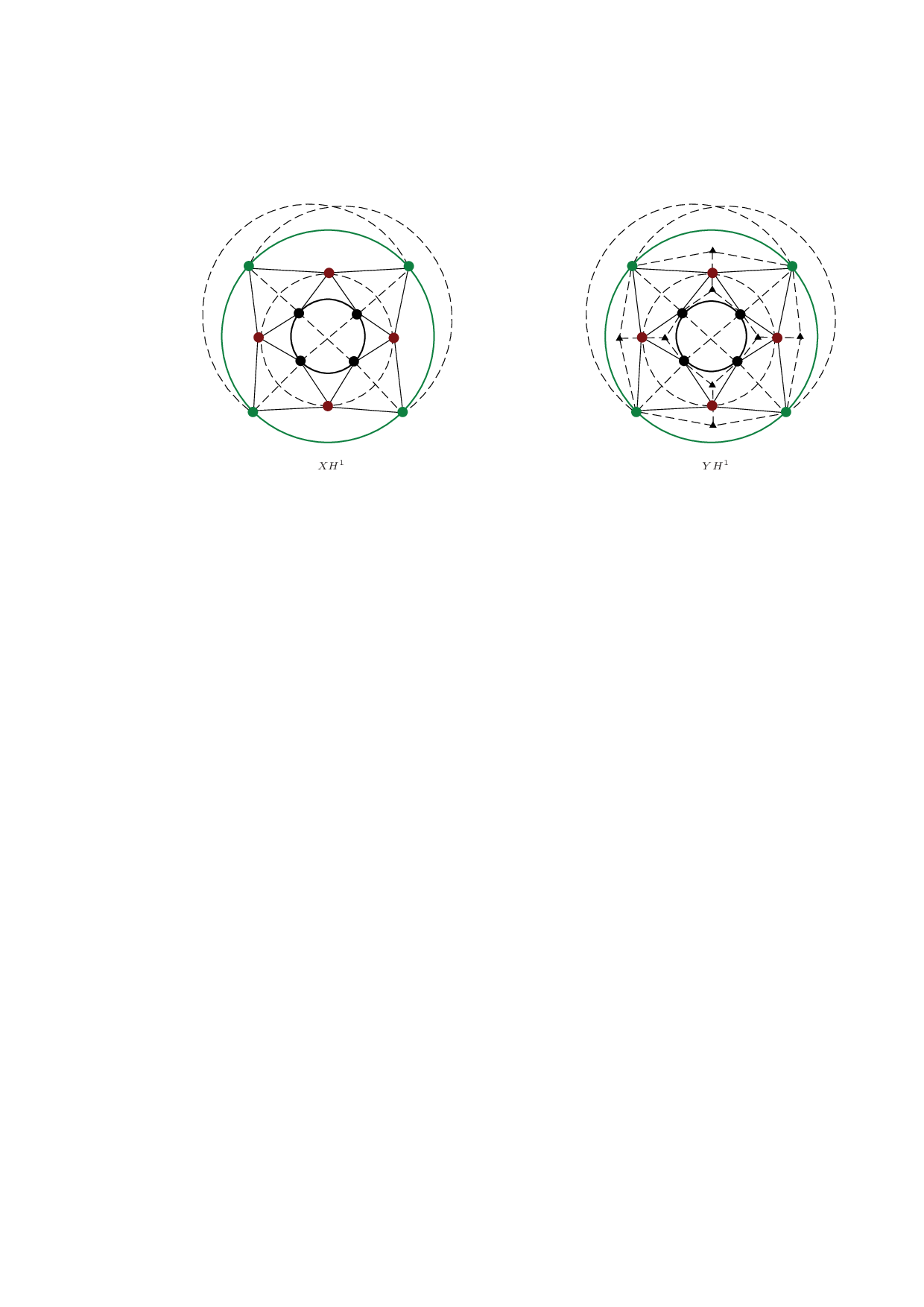}
\caption{The 1-plane graphs $XH^1$ and $YH^1$}
\label{fig31} 
\end{figure}

\begin{lemma}\label{cross-H}
For any $k\ge 1$, 
$|V(H^k)|=2^{k+2}-4$,
$|V(XH^k)|=5\cdot 2^{k+1}-8$,
\begin{equation*}
|V(YH^k)|=9\cdot 2^{k+1}-16
\quad \mbox{and}\quad 
cr(XH^k)=cr(YH^k)=3\cdot 2^{k+1}-6.
\end{equation*}
\end{lemma}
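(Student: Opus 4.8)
The plan is to reduce all four quantities to two face-counts of the intermediate plane graph $H^k\circ H^k$: the number $T$ of its triangular faces and the number $Q$ of its quadrilateral faces. Indeed, a $T_\times$-triangulation adds no vertex and exactly one crossing, while a $K_1$-triangulation adds one vertex and no crossing; hence $|V(XH^k)|=|V(H^k\circ H^k)|$, $|V(YH^k)|=|V(H^k\circ H^k)|+T$, and $cr_{\times}(XH^k)=cr_{\times}(YH^k)=Q$. The two auxiliary vertex counts are immediate: $|V(H^k)|=\sum_{i=1}^{k}2^{i+1}=2^{k+2}-4$ from the geometric sum, and $|V(H^k\circ H^k)|=2|V(H^k)|+2^{k+1}=5\cdot 2^{k+1}-8$ since the construction uses two copies of $H^k$ together with the $2^{k+1}$ ring vertices $z_1,\dots,z_{2^{k+1}}$.

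The heart of the argument is the determination of $T$ and $Q$, which I would organize by viewing $H^k\circ H^k$ as embedded on the sphere (so that the two copies of $H^k$ play symmetric roles) and partitioning the faces into three groups. First, inside a single copy of $H^k$: the kernel $C_4$ bounds one quadrilateral, and between consecutive cycles $C_{2^{j+1}}$ and $C_{2^{j+2}}$ (for $1\le j\le k-1$) the connecting edges $a^j_i a^{j+1}_{2i-2}$ and $a^j_i a^{j+1}_{2i}$ cut the annulus into $2^{j+1}$ triangles (one per inner edge, through the even-indexed outer vertex) and $2^{j+1}$ quadrilaterals (one per inner vertex, around the odd-indexed outer vertex). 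Summing $\sum_{j=1}^{k-1}2^{j+1}=2^{k+1}-4$, each copy contributes $2^{k+1}-4$ triangles and $(2^{k+1}-4)+1$ quadrilaterals. Second, the ring: each $z_i$ is joined to two consecutive vertices on each periphery, producing two triangles (one on each periphery, each using a cycle edge) and one quadrilateral shared with $z_{i+1}$, giving $2\cdot 2^{k+1}$ triangles and $2^{k+1}$ quadrilaterals. Adding the groups yields $T=4\cdot 2^{k+1}-8$ and $Q=3\cdot 2^{k+1}-6$, which I would cross-check against Euler's formula via the identity $T+Q=7\cdot 2^{k+1}-14$.

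Finally I would justify the upgrade from $cr_{\times}$ to $cr$. Since every face of $H^k\circ H^k$ is a triangle or a quadrilateral, replacing each crossing by a degree-$4$ vertex splits every $T_\times$-quadrilateral into four triangles, and the $K_1$-insertions split every triangle into three; hence both $(XH^k)^\times$ and $(YH^k)^\times$ triangulate the sphere. Lemma~\ref{optimal} then gives $cr(XH^k)=cr_{\times}(XH^k)$ and $cr(YH^k)=cr_{\times}(YH^k)$, so $cr(XH^k)=cr(YH^k)=Q=3\cdot 2^{k+1}-6$. Substituting the face-counts into the relations of the first paragraph produces the four stated equalities, and I would record the base case $k=1$ (matching Figure~\ref{fig31}) as a sanity check.

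The main obstacle I anticipate is the careful bookkeeping of the ring region and of the unbounded face. One must verify that the $z_i$-vertices partition the ring into \emph{exactly} the triangles and quadrilaterals described, with no larger faces, and that the outer face of $H^k\circ H^k$ — which is one of the two kernel quadrilaterals after the stereographic projection — is genuinely a quadrilateral that also receives a $T_\times$-operation; overlooking it would throw the crossing count off by one. Treating the graph on the sphere, where the two kernels are symmetric, is the cleanest way to avoid such an off-by-one error, and the Euler check $T+Q=7\cdot 2^{k+1}-14$ serves as a reliable safeguard.
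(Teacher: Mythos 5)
Your proposal is correct, and it follows the paper's overall skeleton — reduce everything to the counts $T=F_3$ and $Q=F_4$ of triangular and quadrangular faces of $H^k\circ H^k$, note that the $K_1$-insertions add $T$ vertices and the $T_\times$-insertions add $Q$ crossings, and invoke Lemma~\ref{optimal} (via the fact that $(XH^k)^\times$ and $(YH^k)^\times$ are triangulations) to pass from $cr_{\times}$ to $cr$. Where you genuinely diverge is in how $T$ and $Q$ are determined. The paper avoids any enumeration: it observes that $H^k\circ H^k$ has only triangular and quadrangular faces and that no two faces of the same type are adjacent, so every edge lies on exactly one triangle and one quadrangle, giving $3F_3=4F_4$; combined with Euler's formula this yields $F_3=\frac{4}{5}(|V(H^k\circ H^k)|-2)$ and $F_4=\frac{3}{5}(|V(H^k\circ H^k)|-2)$ directly from the already-computed vertex count. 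Your region-by-region count (kernel quadrilateral, the annuli between consecutive cycles, and the $z_i$-ring) arrives at the same values $T=4\cdot 2^{k+1}-8$ and $Q=3\cdot 2^{k+1}-6$, and your Euler identity $T+Q=7\cdot 2^{k+1}-14$ does check out against $|E(H^k\circ H^k)|=12\cdot 2^{k+1}-24$. The trade-off: the paper's route is shorter and immune to off-by-one bookkeeping, but it still silently requires the structural verification that faces of the same type are never adjacent; your enumeration is more laborious but establishes the complete face structure explicitly (and would yield the alternation property as a byproduct). Both are sound; your concern about the outer face is correctly resolved by working on the sphere, exactly as the paper's symmetric treatment of the two kernel quadrilaterals implicitly does.
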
 

\begin{proof}
By definition,
\begin{equation*}
|V(H^k)|=2^2+\cdots+2^{k+1}
=2^{k+2}-4
\end{equation*}
and
\begin{equation*}
|V(XH^k)|=|V(H^k\circ H^k)|=2|V(H^k)|+2^{k+1}
=5\cdot 2^{k+1}-8.
\end{equation*}

	 It can be observed in $H^k\circ H^k$, there are only triangular and quadrangular faces, and faces of the same type are not adjacent to each other. Let $F_3$ and $F_4$ denote the numbers of triangular and quadrangular faces of $H^k\circ H^k$, respectively.  Then, $3F_3=4F_4$ and $3F_3+4F_4=2|E(H^k\circ H^k)|$. By  Euler's formula, it follows that 
	\begin{equation*}
		|V(H^k\circ H^k)|-\frac{1}{2}(3F_3+4F_4)
		+(F_3+F_4)=2.
	\end{equation*}
	This implies that  
	\begin{equation*}
		F_3=\frac{4}{5}(|V(H^k\circ H^k)|-2)
		\quad \mbox{and}\quad  F_4=\frac{3}{5}(|V(H^k\circ H^k)|-2).
	\end{equation*}
Thus, 
\begin{equation*}
|V(YH^k)|=|V(H^k\circ H^k)|+F_3
=9\cdot 2^{k +1} - 16
\end{equation*}
and, combined with Lemma~\ref{optimal}, 
\begin{equation*}
cr(XH^k)=cr_{\times}(XH^k)=cr(YH^k)=cr_{\times}(YH^k)=F_4=3\cdot 2^{k+1}-6.
\end{equation*}
The result follows.
\end{proof}

Let $\setx=\{5\cdot 2^{k+1}-8: k\ge 1\}$
and
$\sety=\{9\cdot 2^{k+1}-16: k\ge 1\}$
be two sets of positive integers.  

\begin{proposition}\label{proc3}
For any $n\in \sety$, 
there  exists a $3$-connected  maximal $1$-plane graph $G$ of order $n$ 
with $cr(G)=\frac{1}{3}(n-2)$.
\end{proposition}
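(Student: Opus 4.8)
The plan is to take $G=YH^k$, where $k\ge 1$ is determined by $n=9\cdot 2^{k+1}-16$, and to verify that this graph realizes all three required properties. The value of the crossing number comes almost for free: Lemma~\ref{cross-H} gives $|V(YH^k)|=9\cdot 2^{k+1}-16=n$ and $cr(YH^k)=3\cdot 2^{k+1}-6$, and substituting shows $3\cdot 2^{k+1}-6=\frac13(9\cdot 2^{k+1}-18)=\frac13(n-2)$. The first thing I would record is that $(YH^k)^\times$ is a triangulation: every face of $H^k\circ H^k$ is a triangle or a quadrangle, the $K_1$-triangulation replaces each triangle by three true triangles, and the $T_\times$-triangulation replaces each quadrangle by four fake triangles, so no larger face survives. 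By Lemma~\ref{optimal} this already pins down $cr(YH^k)=cr_{\times}(YH^k)$, so the drawing is optimal.

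For maximality I would use the alternation property recorded in the construction, namely that in $H^k\circ H^k$ no two faces of the same type are adjacent. Since $(YH^k)^\times$ is a triangulation, any two true vertices lying on a common face of $(YH^k)^\times$ are already adjacent in $G$: a true triangle has three pairwise adjacent true vertices, while a fake triangle has two true vertices joined by an original quadrangle edge. Hence no edge can be added between two vertices of a common face without a crossing. A new edge created with exactly one crossing must cross a non-crossing edge $e'=xy$ and join the two apexes $z_1,z_2$ of the triangular faces of $(YH^k)^\times$ sharing $e'$. Inspecting the possibilities for $e'$: if $e'$ was introduced by a $K_1$-triangulation, then both apexes are vertices of the same original triangle, hence adjacent; if $e'$ is an edge of $H^k\circ H^k$, then by the alternation property exactly one of its two incident faces is a fake triangle, so one apex is a crossing (fake) vertex and no real edge can terminate there. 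In every case no edge can be added, so $G$ is maximal.

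The main difficulty is to prove that $G=YH^k$ is $3$-connected. Because the $K_1$-triangulation introduces vertices of degree $3$, the connectivity is at most $3$, so it remains to exclude cut-sets of size at most $2$. My plan is to reduce this to the connectivity of $H^k\circ H^k$: the $T_\times$-triangulation only adds edges (it turns a quadrangle into a $K_4$ on its corners) and the $K_1$-triangulation only attaches a new degree-$3$ vertex inside a triangle, and neither operation can lower the connectivity, so it suffices to show that $H^k\circ H^k$ is $3$-connected. For this I would argue directly from the concentric-cycle description: inside one copy of $H^k$ each vertex of a layer $C_{2^{j+1}}$ has two neighbours on the next layer and consecutive layers are cyclically linked, and the ring $z_1,\dots,z_{2^{k+1}}$ joining the two peripheral cycles provides the connections across the two copies; together these supply three internally disjoint paths between any two vertices, so no two vertices separate the graph. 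Having established all three properties, $G=YH^k$ is a $3$-connected maximal $1$-plane graph of order $n$ with $cr(G)=\frac13(n-2)$; moreover, since $cr_{\times}(G)=cr(G)$ the drawing is optimal and hence immovable (any redrawing of one edge is still a drawing of $G$ and so has at least $cr(G)$ crossings), so in fact $G\in\setg_3$, which is what the tightness statement for $k=3$ requires. I expect the verification of $3$-connectivity to be the only genuinely delicate part; the remaining arguments are structural and follow the pattern above.
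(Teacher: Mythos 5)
Your proposal uses the same extremal graph as the paper ($G=YH^k$ with $n=9\cdot 2^{k+1}-16$) and the same computation of $cr(G)$ via Lemma~\ref{cross-H} and Lemma~\ref{optimal}, so the overall strategy coincides; the differences are in how the two sub-claims are verified. For maximality the paper says only ``by routine checking,'' whereas you spell out the argument (any added edge with one crossing must join the two apexes of the triangular faces of $(YH^k)^\times$ sharing the crossed non-crossing edge, and the alternation of triangles and quadrangles in $H^k\circ H^k$ forces one apex to be fake or the two apexes to be adjacent); this is sound and in fact more informative than the paper. For $3$-connectivity you take a genuinely different route: you reduce to the $3$-connectivity of $H^k\circ H^k$ and argue that $T_\times$- and $K_1$-triangulations preserve it. The reduction itself is valid (adding edges, or a new vertex joined to three distinct vertices, cannot destroy $3$-connectivity), but the $3$-connectivity of $H^k\circ H^k$ is only asserted via a sketch of ``three internally disjoint paths,'' and this is exactly the step that would need a careful case analysis of the concentric-cycle structure. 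The paper avoids this entirely with a one-line observation: since $(YH^k)^\times$ is a triangulation, the planar skeleton $G_P$ (delete one edge from each crossing pair) is a spanning triangulation of $YH^k$ by Observation~\ref{tri}, and every triangulation is $3$-connected; hence so is $YH^k$. I would recommend replacing your direct analysis of $H^k\circ H^k$ with that argument. Finally, your closing remark that $cr_\times(G)=cr(G)$ forces $G$ to be immovable, so that $G\in\setg_3$, is a correct and worthwhile addition that the paper leaves implicit when invoking this proposition for the tightness of Theorem~\ref{main1} at $k=3$.
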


\begin{proof}
By routine checking, it is easy to show that $YH^k$ is a maximal 1-plane graph for all $k\ge 1$.  
 It is well known that every triangulation  is 3-connected. 
As $YH^k$ contains a  triangulation as a spanning subgraph, $YH^k$ is 3-connected.

 For any 
$n=9\cdot 2^{k+1}-16\in \sety$, by Lemma~\ref{cross-H},
$YH^k$ is of order $n$ and we have 
\begin{equation*}
cr(YH^k)=3\cdot 2^{k+1}-6
=\frac 13(n-2).
\end{equation*}
Thus, the result holds.
\end{proof}

\begin{proposition}\label{proc6}
For any $n\in \setx$, 
there  exists a $6$-connected  maximal 1-plane graph $G$ of order $n$ 
with $cr(G)=\frac{3}{5}(n-2)$.
\end{proposition}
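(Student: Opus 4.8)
The plan is to take $G=XH^k$, where $k\ge 1$ is chosen so that $n=5\cdot 2^{k+1}-8\in\setx$. By Lemma~\ref{cross-H} we immediately have $|V(XH^k)|=5\cdot 2^{k+1}-8=n$ and
\[
cr(XH^k)=3\cdot 2^{k+1}-6=\frac{3}{5}\bigl(5\cdot 2^{k+1}-10\bigr)=\frac{3}{5}(n-2),
\]
so the entire content of the proof reduces to checking two things: that $XH^k$ is a maximal $1$-plane graph, and that it is $6$-connected.

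For maximality I would first record that $(XH^k)^{\times}$ is a triangulation, since every face of $H^k\circ H^k$ is a triangle or a quadrangle, the triangles are left untouched, and each quadrangle is cut into four triangles by the fake vertex produced by its $T_\times$-triangulation. Maximality then follows as in the routine verification used for $YH^k$ in Proposition~\ref{proc3}: a hypothetical new edge joining two non-adjacent vertices would have to cross exactly one edge of $XH^k$, this edge cannot be a crossing edge (it is already crossed once), and if it is a non-crossing edge shared by two triangular faces then Lemma~\ref{join} forces the two endpoints of the new edge to be already adjacent, a contradiction.

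The substantive part is $6$-connectivity. I would begin by computing the degree structure: around each vertex of $H^k\circ H^k$ the triangular and quadrangular faces alternate, so a vertex of degree $d$ lies on exactly $d/2$ quadrangles and gains one diagonal from each in passing to $XH^k$; since $H^k\circ H^k$ is $(4,6)$-regular, $XH^k$ is $(6,9)$-regular and in particular $\delta(XH^k)=6$. To rule out a cut-set $S$ of $XH^k$ with $|S|\le 5$ (necessarily consisting of true vertices), I would pass to the triangulation $(XH^k)^{\times}$. Using Lemma~\ref{K_4}, the four true endpoints of each crossing pair are mutually adjacent in $XH^k$, so any two of them surviving in $XH^k-S$ stay in one component; together with the non-adjacency of fake vertices, this lets one promote $S$ to a separator of $(XH^k)^{\times}$ that keeps true vertices on two sides. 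Extracting an appropriate minimal cut-set contained in $S$ and invoking Lemma~\ref{connectivity}, I would obtain an induced separating cycle $C_0$ of $(XH^k)^{\times}$, all of whose vertices are true and of length at most $5$; equivalently $C_0$ is a cycle of length at most $5$ in the plane graph $H^k\circ H^k$. A short check (the four true neighbours of a fake vertex induce a $4$-cycle in $(XH^k)^{\times}$, which cannot sit inside a chordless cycle of length $5$, and for length $4$ can only be the boundary of a single quadrangle) then shows that either $C_0$ bounds a region containing no true vertex on one side, or it has a true vertex strictly inside and strictly outside.

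The final and hardest step is to exclude the latter possibility, namely to prove that no cycle of length at most $5$ in $H^k\circ H^k$ has true vertices of $XH^k$ on both of its sides. This is exactly where the crossing (diagonal) edges are indispensable: the planar subgraph $(XH^k)_P$ is a triangulation on $n$ vertices, hence has average degree below $6$ and a vertex of degree at most $5$, so it is at best $5$-connected and no purely planar argument can give $6$-connectivity; the diagonals rescue the low-degree vertices by re-linking the opposite corner of each quadrangle (this is why the neighbourhood of a degree-$4$ vertex of $H^k\circ H^k$ is a pair of edges rather than a separating $4$-cycle). Concretely I would exploit the layered structure of $H^k\circ H^k$ — the two caps of nested cycles $C_4,C_8,\dots,C_{2^{k+1}}$ joined by the $z$-belt — to show that every cycle of length $3,4,5$ is local: it is either facial or its bounded side meets only fake vertices. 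Establishing this dichotomy uniformly in $k$, and handling cleanly the passage from a minimal separator of $XH^k$ to a genuinely true-separating minimal cut-set of $(XH^k)^{\times}$, is the main obstacle I anticipate; granting it, the existence of $C_0$ is contradicted, so $XH^k$ is $6$-connected and the proposition follows.
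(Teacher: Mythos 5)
Your computation of $n$ and $cr(XH^k)$ from Lemma~\ref{cross-H} is correct, and you have correctly identified that the whole burden of the proof is maximality plus $6$-connectivity. The maximality sketch is acceptable (the paper also dismisses this as routine checking). The problem is the $6$-connectivity argument: at its crux you write that establishing the dichotomy ``every cycle of length $3,4,5$ in $H^k\circ H^k$ is facial or encloses only fake vertices,'' uniformly in $k$, together with the clean passage from a minimal separator of $XH^k$ to a true-separating minimal cut-set of $(XH^k)^{\times}$, ``is the main obstacle I anticipate.'' That obstacle is precisely the theorem; deferring it leaves the proof with a genuine, unfilled gap. The reduction itself also has a loose joint you flag but do not resolve: a minimal cut-set of $(XH^k)^{\times}$ extracted from $S$ may separate only fake vertices (all four true neighbours of a fake vertex could lie in a $5$-set $S$), in which case the separating cycle supplied by Lemma~\ref{connectivity} tells you nothing about $XH^k-S$ being disconnected.

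The paper avoids this classification problem entirely by a different pivot: it exhibits a \emph{spanning planar} triangulation $P(XH^k)\subseteq XH^k$ that is $(5,6)$-regular, so Lemma~\ref{5connectivity} (Etourneau) gives $5$-connectivity of $P(XH^k)$, hence of $XH^k$, for free. A hypothetical minimum cut-set $S$ of $XH^k$ with $|S|=5$ is then a minimal cut-set of $P(XH^k)$, and Lemma~\ref{connectivity} applied to the \emph{planar} triangulation $P(XH^k)$ (no fake vertices, so none of your bookkeeping issues arise) forces $S$ to induce a separating $5$-cycle there; inspection shows the only candidates are the neighbourhoods of the degree-$5$ vertices on the kernel and periphery cycles, and the extra crossing diagonals of $XH^k$ reconnect $XH^k-S$ in each of these finitely many local configurations. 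In short, the paper localizes the verification to an explicit short list of candidate $5$-cuts, whereas your route requires a global structural statement about all short cycles of $H^k\circ H^k$ that you do not prove. To repair your proof you would either need to carry out that classification in full, or adopt the spanning $(5,6)$-regular triangulation device.
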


\begin{proof}
By routine checking, it is easy to show that $XH^k$ is a maximal 1-plane graph for all $k\ge 1$.  
Next, we prove that the following claim.

\setcounter{claim}{0}
\begin{claim}\label{cl-5con}
$XH^k$ is $6$-connected.
\end{claim}
\begin{proof}  It is not difficult to find that $XH^k$ contains a spanning triangulation $P(XH^k)$ which is  (5,6)-regular, see the left side of Figure~\ref{fig32} for $k=3$. By Lemma~\ref{5connectivity}, $P(XH^k)$ is 5-connected, and thus $XH^k$ is as well. 

Suppose to the contrary that $XH^k$ is not 6-connected. Let $S$ be a minimum cut-set of $XH^k$. Then $|S|=5$. Clearly, $S$ is also a minimal cut-set of $P(XH^k)$. By Lemma~\ref{connectivity}, $S$ induces a separating cycle in $P(XH^k)$. By routine checking in $P(XH^k)$, one can observe that $S$ can only be the neighbors of the vertices of degree 5 (marked by $\bigstar$ in Figure~\ref{fig32}) in the kernel and periphery cycles. But we can verify that $XH^k-S$ is connected.  This contradicts the assumption that $S$ is a cut-set of $XH^k$. Consequently,  $XH^k$ is 6-connected.

Hence the claim holds.
\end{proof}

For any 
$n=5\cdot 2^{k+1}-8\in \setx$,  $XH^k$ is of order $n$ by 
Lemma~\ref{cross-H} and $6$-connected by Claim~\ref{cl-5con}.
It follows from Lemma~\ref{cross-H} again that
\begin{equation*}
cr(XH^k)=3\cdot 2^{k+1}-6
=\frac 35(n-2).
\end{equation*}

Thus, the result holds.
\end{proof}

\begin{figure}[ht]
\centering
\includegraphics[width=0.9\textwidth]{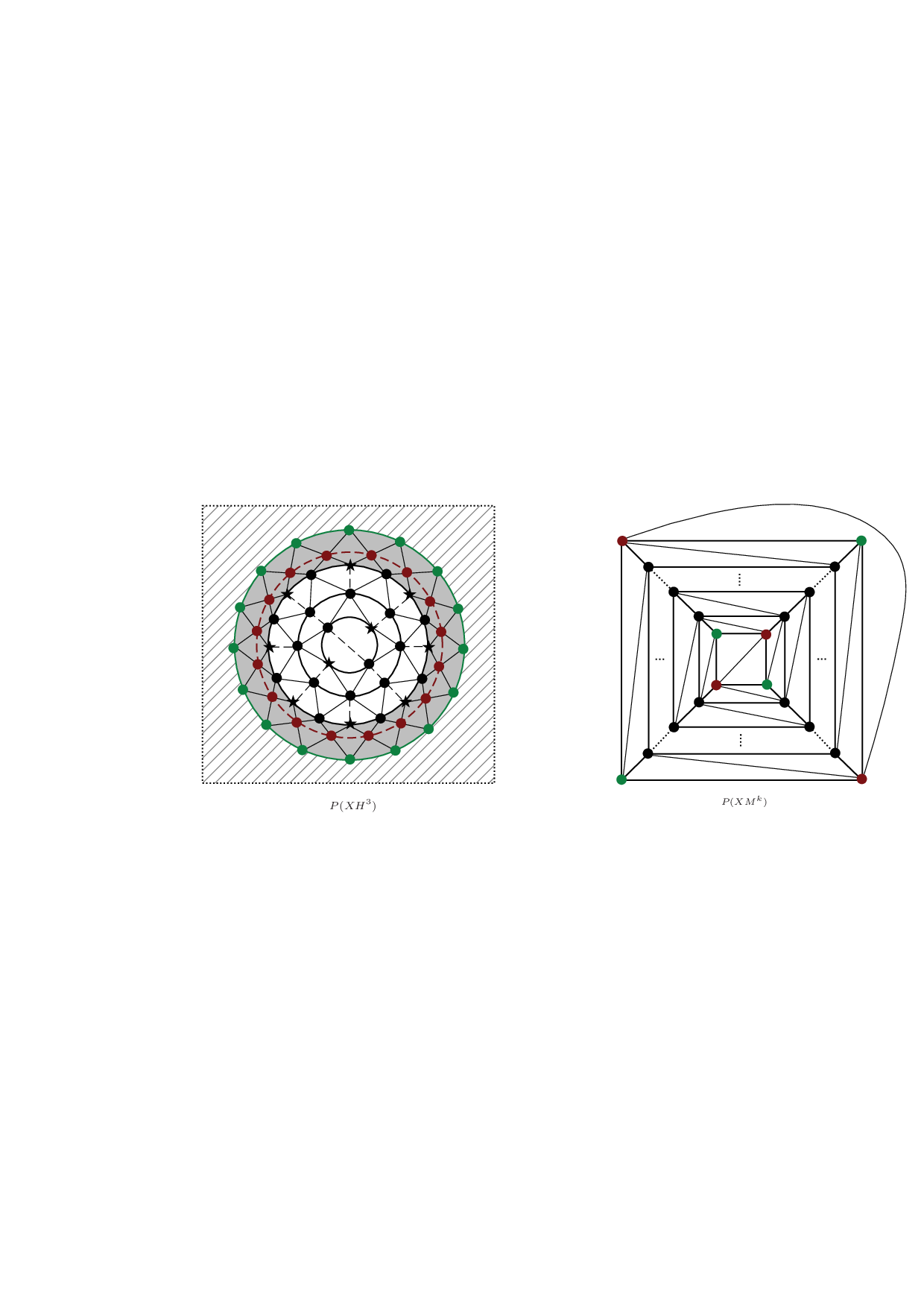}
\caption{The maximal plane graphs $P(XH^3)$ and $P(XM^k)$}
\label{fig32}
\end{figure}

Let $M^k=C_4\Box P_k$ be the Cartesian product of $C_4$ and $P_k$,
which are the cycle graph of order $4$
and the path graph of order $k$,
respectively. 
The left side of Figure~\ref{fig4} shows a planar drawing of $M^k$
in which each face is a quadrangle.  
Let $XM^k$ denote the graph obtained from $M^k$ by applying: 
\begin{itemize}
\setlength{\itemindent}{2em}
\item [{\rm(i)}]  a $K_2$-triangulation operation
	to the innermost quadrangle of $M^k$
	and adding a diagonal edge;
\item [{\rm(ii)}]  a $T_\times$-triangulation 
	operation
	to the outermost quadrangle of $M^k$; and 
\item [{\rm(iii)}]  $K_1$-triangulation operation
	to 
	each intermediate quadrangle and adding a diagonal edge.
\end{itemize}
The right side of Figure~\ref{fig4} illustrates this construction.  

\begin{figure}[htp]
	\centering
	\includegraphics[width=0.9\textwidth]{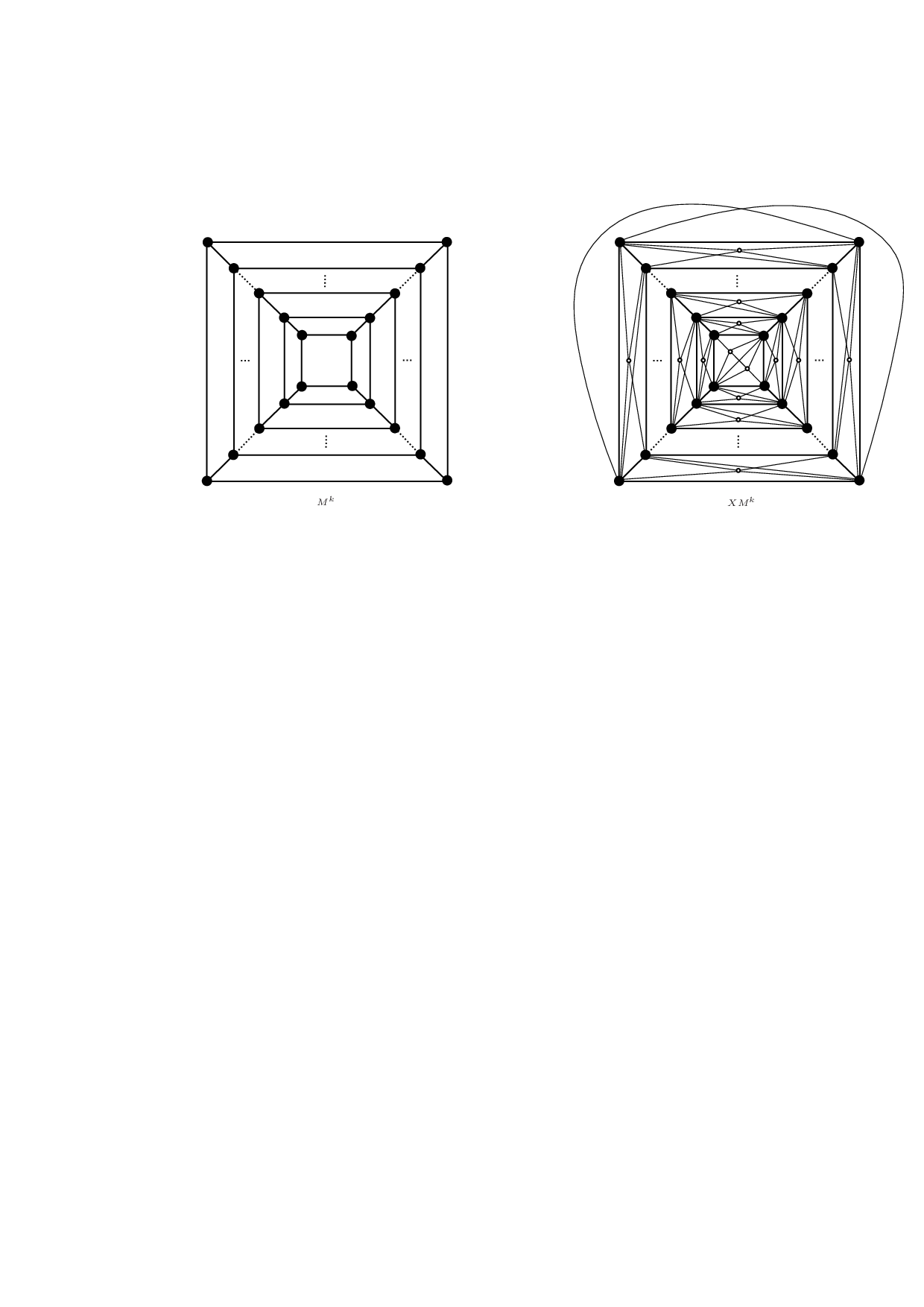}
	\caption{The plane graph $M^k=C_4\Box P_k$ and 1-plane graph $XM^k$}
	\label{fig4} 
\end{figure}

By the definitions of $M^k$ and $XM^k$, combined with Lemma \ref{optimal}, the next result follows directly.

\begin{lemma}\label{XM}
For any $k\ge 1$,  it follows that $|V(M^k)| =4k$, $|V(XM^k)|=8k-2$, and $cr(XM^k)=4k-2$.
\end{lemma}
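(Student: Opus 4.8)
The plan is to reduce all three claimed equalities to a vertex count, a crossing count, and one application of Lemma~\ref{optimal}. The order of $M^k$ is immediate: since $M^k=C_4\Box P_k$ is a Cartesian product, $|V(M^k)|=|V(C_4)|\cdot|V(P_k)|=4k$. For the planar drawing described (concentric squares joined by radial edges), I would first record the edge count $|E(M^k)|=|E(C_4)|\cdot|V(P_k)|+|V(C_4)|\cdot|E(P_k)|=4k+4(k-1)=8k-4$, so that Euler's formula gives $4k-(8k-4)+F=2$, i.e.\ $F=4k-2$ faces, each a quadrangle as asserted. I would then classify these $4k-2$ quadrangles into exactly one innermost face, one outermost (unbounded) face, and $4k-4$ intermediate faces.

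Next I would count the vertices inserted in passing from $M^k$ to $XM^k$. By definition the $K_2$-triangulation of the innermost quadrangle inserts $2$ vertices, each $K_1$-triangulation of an intermediate quadrangle inserts exactly $1$ vertex, the $T_\times$-triangulation of the outermost quadrangle inserts $0$ vertices, and the added diagonal edges insert none. Hence the number of new vertices is $2+(4k-4)\cdot 1+0=4k-2$, giving $|V(XM^k)|=4k+(4k-2)=8k-2$, as required.

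The crossing count is where Lemma~\ref{optimal} enters, and the key preliminary step is to verify that $(XM^k)^\times$ is a triangulation. For this I would examine each quadrangle locally and check that, after its prescribed operation, every resulting face is a triangle in $(XM^k)^\times$: the $T_\times$-triangulation splits the outermost quadrangle into four triangles about its crossing vertex; the $K_1$-triangulation together with its diagonal edge splits each intermediate quadrangle into six triangles, because the diagonal crosses exactly one of the four spokes and replacing that crossing by a degree-$4$ vertex subdivides the two affected triangles; and an analogous local check handles the $K_2$-triangulated innermost quadrangle with its diagonal. Each of these local pictures is a good $1$-planar drawing in which the single added diagonal (or the $T_\times$ pair) is crossed exactly once, so each operated quadrangle contributes exactly one crossing. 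Summing, $cr_\times(XM^k)=1+(4k-4)+1=4k-2$, and since $(XM^k)^\times$ is a triangulation, Lemma~\ref{optimal} yields $cr(XM^k)=cr_\times(XM^k)=4k-2$.

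The main obstacle is precisely the verification that $(XM^k)^\times$ is genuinely a triangulation and that each added diagonal is crossed exactly once; this is the only place that requires an honest look at the drawing rather than bookkeeping, but it reduces to a routine local case analysis, one picture per operation type, matching Figure~\ref{fig4}. Once this is settled, the vertex count and the crossing count combine through Lemma~\ref{optimal} to give all three equalities at once. As a consistency check one may compute $|E(XM^k)|=28k-14$ directly from the inserted edges and confirm that it matches $3|V(XM^k)|-6+cr_\times(XM^k)$, the identity obtained in the proof of Lemma~\ref{optimal}.
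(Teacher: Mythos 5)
Your proposal is correct and follows exactly the route the paper intends: the paper states only that the lemma ``follows directly'' from the definitions of $M^k$ and $XM^k$ combined with Lemma~\ref{optimal}, and your argument simply makes that bookkeeping explicit (the face count $4k-2$ via Euler's formula, the $2+(4k-4)+0$ inserted vertices, one crossing per operated quadrangle, and the triangulation check needed to invoke Lemma~\ref{optimal}). All the arithmetic, including the consistency check $|E(XM^k)|=28k-14=3(8k-2)-6+(4k-2)$, is right.
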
 

\def \setm {{\mathscr M}}

Let $\setm=\{8k-2: k\ge 1\}$ be a set of positive integers.

\begin{proposition}\label{pro2}
	For any $n\in \setm$, 
	there  exists a 4-connected maximal $1$-plane graph $G$ of order $n$ 
	with $cr(G)=\frac{1}{2}(n-2)$. 
\end{proposition}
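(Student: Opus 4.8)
The plan is to take $G=XM^k$ as the required graph, with $n=8k-2$. Its order and crossing number are immediate from Lemma~\ref{XM}: since $|V(XM^k)|=8k-2=n$ and $cr(XM^k)=4k-2$, we get $cr(XM^k)=\frac{8k-4}{2}=\frac{n-2}{2}$, exactly the claimed value. Hence the real content is to verify that $XM^k$ is (a) a maximal $1$-plane graph and (b) $4$-connected; together these place $XM^k\in\setg_4$ and finish the proof.

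For maximality, the key observation is that $(XM^k)^\times$ is a triangulation: each of the three construction operations (a $K_2$-triangulation of the innermost quadrangle, a $T_\times$-triangulation of the outermost one, and a $K_1$-triangulation plus a diagonal on each intermediate quadrangle) triangulates the corresponding face of $M^k$ in the planarization, and these account for all faces. By Lemma~\ref{optimal} this also reconfirms $cr(XM^k)=cr_\times(XM^k)=4k-2$. To deduce maximality, suppose a new edge $e$ could be added while keeping the drawing $1$-plane and simple; then $e$ crosses at most one existing edge. If $e$ crosses none, it lies inside a single triangular face of $(XM^k)^\times$, so its two (true) endpoints lie on that face's boundary and are already adjacent by Lemma~\ref{adjacent}, a contradiction. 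If $e$ crosses exactly one edge $g$, then $g$ must be previously uncrossed, hence a non-crossing edge shared by two triangular faces $F_1,F_2$; the endpoints of $e$ lie in $\partial(F_1)\setminus\partial(F_2)$ and $\partial(F_2)\setminus\partial(F_1)$, so they are already adjacent by Lemma~\ref{join}, again a contradiction. Thus $XM^k$ is maximal $1$-plane.

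For the connectivity I would work with the spanning triangulation $P(XM^k)$ obtained from $XM^k$ by deleting one edge of each crossing pair so that every face stays a triangle, namely by keeping the $K_1$- and $K_2$-triangulations intact and retaining one diagonal of the outermost $T_\times$; see Figure~\ref{fig32}. As a triangulation on at least four vertices, $P(XM^k)$ is $3$-connected, and hence so is the larger graph $XM^k$. To upgrade this to $4$-connectivity, suppose $XM^k$ had a cut-set $S$ with $|S|\le 3$. Since $P(XM^k)$ is a spanning subgraph of $XM^k$, $S$ is also a cut-set of $P(XM^k)$, so $P(XM^k)$ contains a minimal cut-set $S'\subseteq S$ with $|S'|\le 3$. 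By Lemma~\ref{connectivity}, $S'$ induces a separating cycle in $P(XM^k)$; as a cycle has at least three vertices, $|S'|=3$ and $S'$ is a separating triangle. It therefore suffices to establish two structural facts about $P(XM^k)$: that its minimum degree is at least $4$ (so no vertex yields a $3$-element cut) and that it contains no separating triangle. Granting these, no such $S$ exists, and $XM^k$ is $4$-connected.

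The main obstacle is precisely this last structural verification. Because $P(XM^k)$ is built on the concentric-ring skeleton $M^k=C_4\Box P_k$, I expect the degree and separating-triangle analysis to reduce to a bounded list of local configurations: the inserted $K_1$-centres have degree exactly $4$ and their neighbourhoods are $4$-cycles (hence $4$-cuts, but not triangles), the $K_2$-gadget and the outer $T_\times$-gadget account for the remaining low-degree vertices near the centre and periphery, and every triangle of $P(XM^k)$ should turn out to bound an empty region, i.e.\ to be a face rather than a separating triangle. Carrying out this case analysis carefully is the crux; note that one cannot instead invoke the degree inequality of Lemma~\ref{4connectivity}, since here $\omega(4)$ grows linearly in $k$. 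Finally, the very presence of the degree-$4$ $K_1$-centres shows $XM^k$ is not $5$-connected, so it is exactly $4$-connected and genuinely witnesses the tightness of the $k=4$ bound.
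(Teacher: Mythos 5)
Your choice of extremal graph and the numerical computation via Lemma~\ref{XM} match the paper exactly, but both substantive verifications have problems. First, the maximality argument is circular: Lemmas~\ref{adjacent} and~\ref{join} are stated for graphs that are \emph{already known} to be maximal 1-plane, so you cannot invoke them to conclude that two vertices of $XM^k$ lying on a common face of $(XM^k)^\times$ (or on the two faces flanking an uncrossed edge) are adjacent --- those adjacencies are precisely what must be verified by direct inspection of the construction. Your reduction of maximality to these two face-local sufficient conditions is the right framework, but as written you have restated what needs to be checked rather than checked it. (The paper dismisses this step as ``routine checking'', so the gap is of the same kind, but your version gives the false impression of being a complete argument.)

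Second, for $4$-connectivity you take a genuinely different route from the paper and stop exactly at the hard part. You work with a \emph{spanning} triangulation of $XM^k$ on all $8k-2$ vertices and propose to rule out separating triangles by a case analysis that you explicitly defer (``carrying out this case analysis carefully is the crux''). The paper instead takes $P(XM^k)$ to be a triangulation on only the $4k$ vertices of $M^k$ (each quadrangle of $M^k$ receiving one diagonal); for $k\ge 2$ this graph has degree sequence $6^{4k-8},5^4,4^4$, so $\left\lfloor\frac{7}{3}\omega(4)\right\rfloor+\omega(5)=9+4=13<14$ and Lemma~\ref{4connectivity} applies directly, giving $4$-connectivity with no case analysis. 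It then rebuilds $XM^k$ from $P(XM^k)$ by successively attaching vertices of degree $4$ (plus one final edge), each step preserving $4$-connectivity by an easy cut-set argument. Your observation that Lemma~\ref{4connectivity} fails on the spanning triangulation because $\omega(4)$ grows with $k$ is correct, but the remedy is to apply it to the smaller sub-triangulation, not to abandon it; as it stands, your connectivity proof is incomplete, and the case $k=1$ (where the proposed degree analysis degenerates) is not addressed at all.
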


\begin{proof}
Let $n=8k-2\in \setm$ for some $k\ge 1$.  By routine checking, it is easy to show that $XM^k$ is a maximal 1-plane graph.  
By Lemma~\ref{XM},  $XM^k$ is  of order $n$, and
\begin{equation*}
cr(XM^k)=4k-2=\frac 12 (n-2).
\end{equation*}

Now it remains to show that $XM^k$
is $4$-connected.
It is routine to check that 
$XM^k$ is $4$-connected if $k=1$. 
Now assume that $k\ge 2$.
Observe that $XM^k$ contains a triangulation $P(XM^k)$ (illustrated in the right side of Figure~\ref{fig32}) 
with degree sequence:
\begin{equation*}
\underbrace{6,6,\ldots,6}_{4k-8},5,5,5,5,4,4,4,4,
\end{equation*}
where the four vertices of degree $4$
are marked in green
and the four vertices of degree 5
are marked in red
(see the right side of Figure~\ref{fig32}). 
Then, by Lemma~\ref{4connectivity},
$P(XM^k)$ is 4-connected. 

By comparing graphs $XM^k$ and $P(XM^k)$, we find that  $XM^k$ can be constructed from  $P(XM^k)$ by continuously adding $4(k-1)$ vertices of degree 4;  adding a $K_2$ and six edges joining the two vertices $v_1$ and $v_2$ in this $K_2$ to vertices in $P(XM^k)$ so that each $v_i$ is of degree $4$; 
and finally
adding one edge connecting  two vertices of degree 4.
Obviously, each expansion preserves 4-connectivity, thereby confirming that $XM^k$ is also 4-connected.
\end{proof}

\begin{proposition}\label{pro3}
	For any $n\in \{24, 56\}$, 
there exists a 7-connected maximal $1$-plane graph $G$ of order $n$
with $cr(G)=\frac{3}{4}n$.
\end{proposition}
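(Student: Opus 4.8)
The plan is to produce, for each $n\in\{24,56\}$, one explicit graph $G_n$ and then to verify separately that $cr(G_n)=\frac34 n$, that $G_n$ is maximal $1$-plane, and that $G_n$ is $7$-connected. I would obtain $G_n$ from a $3$-connected plane graph $B_n$ whose faces are all triangles or quadrangles, by performing a $T_\times$-triangulation on every quadrangular face and leaving the triangular faces intact. Let $t$ and $q$ denote the numbers of triangular and quadrangular faces of $B_n$. Since each quadrangle contributes exactly one crossing, achieving $cr(G_n)=\frac34 n$ requires $q=\frac34 n$, and a short application of Euler's formula (using that $B_n$ has only $3$- and $4$-faces) then forces $t=8,\ q=18$ for $n=24$ and $t=24,\ q=42$ for $n=56$. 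A further requirement is that every vertex of $G_n$ have degree at least $7$; since inserting the two diagonals of a quadrangle raises the degree of each of its four corners by one, this pins down how many quadrangles each vertex must lie on. For $n=24$ the natural base is the vertex-transitive polyhedron with vertex type $3.4.4.4$ (the rhombicuboctahedron), in which every vertex meets one triangle and three quadrangles and so attains degree $4+3=7$; for $n=56$ one takes a base with two vertex types, meeting one or two triangles, producing final degrees $7$ and $8$. Both graphs would be exhibited in a figure.

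The crossing number and maximality are the routine parts. By the construction every non-triangular face of $B_n$ is a quadrangle carrying a single crossing, so every face of $G_n^\times$ is a triangle; that is, $G_n^\times$ is a triangulation. Lemma~\ref{optimal} then yields $cr(G_n)=cr_{\times}(G_n)$, and $cr_{\times}(G_n)$ is exactly the number of inserted crossings, namely $q=\frac34 n$. Maximality follows from the same fact: when $G_n^\times$ is a triangulation, every cell of the drawing is bounded by a triangle, any two true vertices sharing a face are already adjacent by Lemma~\ref{adjacent}, and no new edge can be routed without producing a second crossing on some already-crossed edge; hence $G_n$ is a maximal $1$-plane graph.

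The \textbf{main obstacle} is the $7$-connectivity. The technique used for $k=5,6$ earlier in the paper---displaying a spanning triangulation and applying Lemma~\ref{5connectivity} or Lemma~\ref{4connectivity}---cannot work here, because every planar graph has a vertex of degree at most $5$, so no spanning triangulation of $G_n$ can even be $6$-connected. The additional connectivity must therefore be carried by the crossing diagonals themselves, and I would argue directly on $G_n$. First I would confirm that the minimum degree of $G_n$ equals $7$, which follows from the degree bookkeeping above and is consistent with Lemma~\ref{trcredge} (for a degree-$7$ vertex that lemma in fact forces exactly three incident crossing edges). Next, assuming for contradiction a cut-set $S$ with $|S|\le 6$, I would take $S$ minimal and examine how the components of $G_n-S$ can be joined. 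Deleting one edge from each crossing pair turns $G_n$ into the triangulation $G_P$ of Observation~\ref{tri}(i), to which Lemma~\ref{connectivity} applies: a minimal cut-set of $G_P$ induces a separating cycle. The crossing edges that were deleted each join the two opposite corners of a former quadrangle, so the argument reduces to showing that no set of at most $6$ vertices can meet every such diagonal that bridges the two sides of a separating cycle---which, for the degree-$7$ structure at hand, fails, contradicting that $S$ separates $G_n$.

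The weight of the proof lies in this last step, and the symmetry of the construction is what keeps it finite: for $n=24$ the graph $G_n$ is vertex-transitive---the $T_\times$-operation inserts both diagonals and so preserves the full symmetry of the base polyhedron---so the cut-set analysis need only treat a few orbit representatives. For $n=56$ the two degree classes ($7$ and $8$) generate somewhat more cases, but the same scheme applies. I expect ruling out separating sets of every size up to $6$, while correctly crediting the crossing edges for the connectivity they supply, to be the delicate and case-intensive heart of the argument; by contrast the identity $cr(G_n)=\frac34 n$ and the maximality of $G_n$ drop out immediately from Lemma~\ref{optimal}.
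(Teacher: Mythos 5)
Your approach is essentially the paper's: the published proof simply exhibits two explicit $1$-plane graphs $T_1$ and $T_2$ of orders $24$ and $56$ in Figure~\ref{fig5}, computes $cr(T_1)=18=\frac34\cdot 24$ and $cr(T_2)=42=\frac34\cdot 56$ via Lemma~\ref{optimal}, and states that maximality and $7$-connectivity ``can be verified directly.'' Your Euler-formula bookkeeping ($8$ triangles and $18$ quadrangles for $n=24$, $24$ and $42$ for $n=56$) and the identification of the rhombicuboctahedron as the base for $n=24$ supply a more principled derivation of what those figures must contain, and your observation that no spanning triangulation can certify $7$-connectivity correctly explains why the Lemma~\ref{5connectivity}/\ref{4connectivity} technique used for $k\le 6$ is unavailable here. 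Two caveats. First, your maximality argument cites Lemma~\ref{adjacent} backwards: that lemma is a \emph{consequence} of maximality, not a criterion for it; what you actually need to check is that for every pair of faces of $G_n^\times$ sharing a non-crossing edge, the true vertices separated by that edge are already adjacent (in your constructions this does hold, e.g.\ because triangular faces of the base are never mutually adjacent, but it must be verified rather than quoted). Second, the $7$-connectivity argument remains a plan rather than a proof --- though the paper itself offers nothing more than ``verified directly,'' so on that point you are no less complete than the source.
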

\begin{proof}

Let $T_1$ and $T_2$ be the two 
$1$-plane graphs shown in Figure~\ref{fig5}, respectively. 
Observe that $|V(T_1)|=24$,  $|V(T_2)|=56$,  and, combined with Lemma \ref{optimal},
\begin{equation*}
cr(T_1)=cr_{\times}(T_1)=18=\frac{3}{4}\times 24,
\quad 
cr(T_2)=cr_{\times}(T_2)=42=\frac{3}{4}\times 56.
\end{equation*}
It can be verified directly that 
 for $i=1,2$,  
$T_i$ is a 7-connected maximal $1$-plane graph.
Thus, the result holds.
\end{proof}

\begin{remark}\label{remark7-con}
We have not found infinitely many $7$-connected  maximal 1-plane graphs $G$ with the property that 
there are $\frac{3}{4}|V(G)|$
crossings in $G$. 
It is quite possible that there are only a finite number of such classes of graphs.
\end{remark}

\begin{figure}[htp]
\centering
\includegraphics[width=0.9\textwidth]{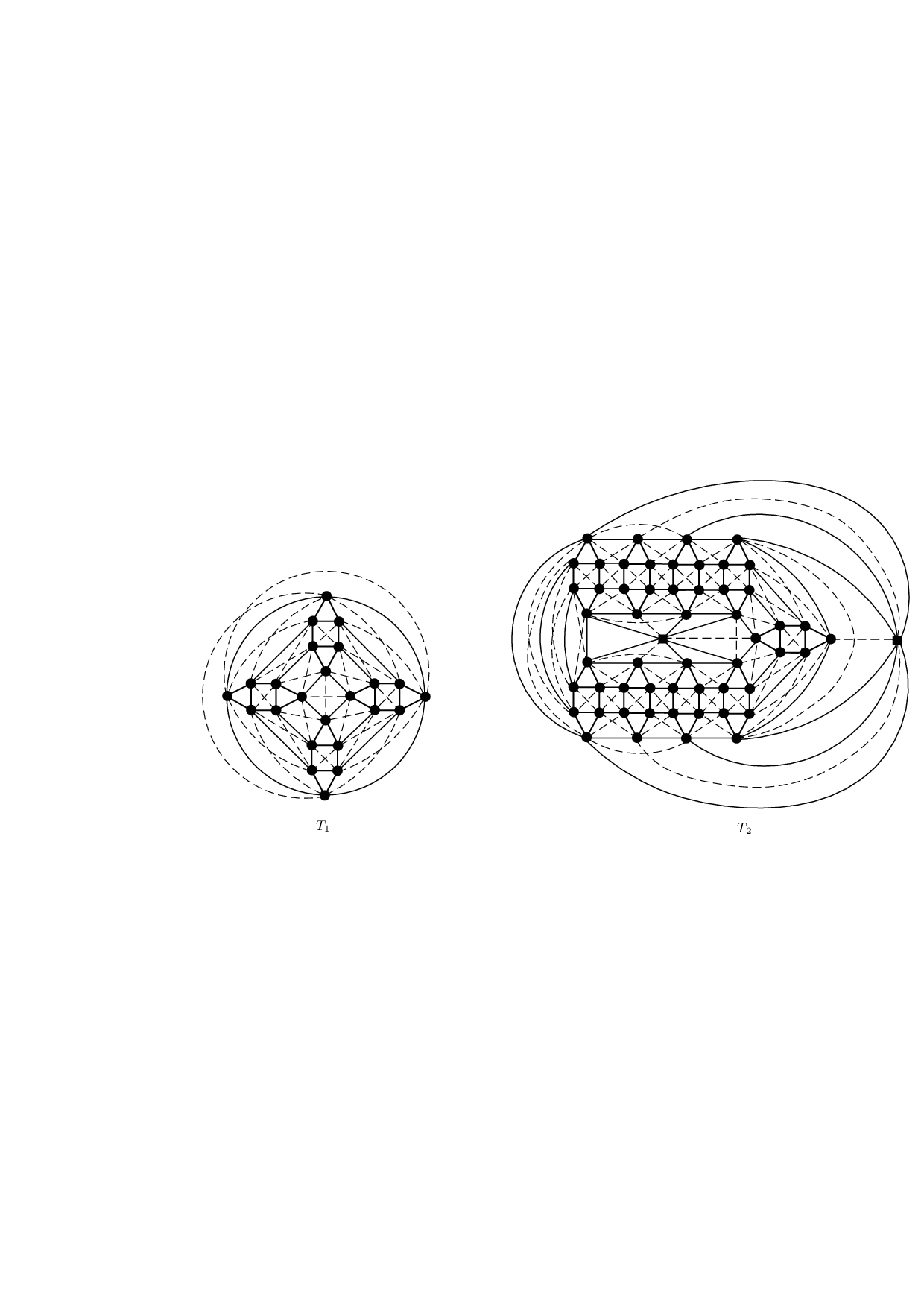}
\caption{Two 1-plane graphs with connectivity 7}
\label{fig5}
\end{figure}

\section{Prove the main results}
\label{sec-main}

In this section, we complete the proofs of Theorems~\ref{main1} and~\ref{main2}.

\vspace{0.4 cm}

\noindent  {\bf Proof	
of Theorem~\ref{main1}}: 
Let $G\in \setg_k$. 
By Lemma~\ref{lemtr-3} and~\ref{lemtr-k}, we know that $G^\times$ is a triangulation. 

First, consider the cases for $k=3,4,5$. For the convenience of counting, 
we construct an auxiliary bipartite graph $H$ with 
a bipartition 
$(V_{bl}(G_P^*), V_{rd}(G_P^*))$, where a vertex $x\in V_{bl}(G_P^*)$
is adjacent to a vertex  $y\in V_{rd}(G_P^*)$ 
if and only if
$x$ and $y$ correspond to two adjacent vertices in $G_P^*$.

By Observation~\ref{obs} and~\ref{tri} (ii),   we know that each red vertex is adjacent to at least one red vertex in $G_P^*$ and $G_P^*$ is 3-regular.  Therefore, it follows that $deg_H(v)\le 2$ for all $v\in V_{rd}(G_P^*)$.  
It follows that
\begin{equation}\label{EQ1}
|E(H)|\le 2|V_{rd}(G_P^*)|.
\end{equation}
By Corollary~\ref{adjacent-vertex},  each blue vertex in $G_P^*$  is adjacent to at most  $5-k$ blue vertices  for $3\le k\le 5$,
implying that $deg_H(v)\ge 3-(5-k)=k-2$ for each $v\in V_{bl}(G_P^*)$. 
Thus, we have
\begin{equation}\label{EQ2}
|E(H)|\ge (k-2)|V_{bl}(G_P^*)|.
\end{equation}
Combining inequality (\ref{EQ1}) and  (\ref{EQ2}), we obtain 
\begin{equation}\label{EQ3}
 (k-2)|V_{bl}(G_P^*)|\le 2|V_{rd}(G_P^*)|.
 \end{equation}
From Observation~\ref{tri} (ii)  it follows that
\begin{equation}\label{EQ4}
|V_{bl}(G_P^*)|+|V_{rd}(G_P^*)|=|V(G_P^*)|=2n-4.
 \end{equation}
Therefore, combining inequality (\ref{EQ3}) with  equality (\ref{EQ4}) we deduce that 
\begin{equation}\label{EQ5}
|V_{rd}(G_P^*)|\ge \frac{(k-2)(2n-4)}{k}.
 \end{equation}
By Observation~\ref{tri} (ii) and Remark~\ref{1pcross}, it follows from inequality (\ref{EQ5}) that 
\begin{equation*}
cr(G)= \frac 12 |V_{rd}(G_P^*)|
\ge \frac{(k-2)(n-2)}{k}.
 \end{equation*}
This shows that the conclusion holds for $k=3,4,5$.

When $k=6$, the conclusion follows directly because, if the lower bound on $cr(G)$ holds for $k=5$ , then it also holds for $k=6$. Hence, we only need to consider the case for $k=7$.

In this case,  $deg_G(v)\ge 7$ for all $v\in V(G)$. By Lemma~\ref{trcredge},
each vertex in $G$ is incident with at least three crossing edges (corresponding to three crossing points). 
 Meanwhile, observe that each crossing involves four vertices of $G$. 
 Let $\Psi$ be the set of ordered pairs $(c , v)$, where $c$ is a crossing point in $G$ and $v$ is a vertex in $G$. Thus, 
\begin{equation*}
3n\le |\Psi|\le 4cr_{\times}(G).
\end{equation*}
 This implies, combined with Lemma~\ref{optimal}, that 
\begin{equation*}
cr(G)=cr_{\times}(G)\ge \frac{3n}{4},
\end{equation*}
 as desired. 

When $k\in \{3,4,6,7\}$, 
the tightness of the lower bound
for $cr(G)$ follows directly from 
Propositions~\ref{proc3}, \ref{proc6}, \ref{pro2} and \ref{pro3}.
Thus, Theorem~\ref{main1} holds.
\hfill 
$\square$

\vspace{0.4 cm}

We now proceed to prove Theorem~\ref{main2}.
\vspace{0.4 cm}

\noindent 
{\bf Proof of Theorem~\ref{main2}}:
Since $G\in \mathcal{G}_k$,   by Lemma~\ref{lemtr-3} and~\ref{lemtr-k},
$G^\times$ is a triangulation. 
Hence, by Remark~\ref{1pcross}, 
$|E(G)|=3n-6+cr(G)$. 
Thus, the lower bound of $|E(G)|$ in  Theorem~\ref{main2} follows directly from that in Theorem~\ref{main1}. 

As $|E(G)|=3n-6+cr(G)$,
the tightness of the lower bounds
for $|E(G)|$ follows from 
Theorem~\ref{main1}. 
\hfill 
$\square$

\vspace{0.4 cm}

A 1-plane graph
$G$ is called {\it near optimal} (as introduced by Suzuki in \cite{YS})  if it satisfies the following conditions:
\begin{enumerate}
\setlength{\itemindent}{2em}
	\item[(i)]  every face in the subgraph $H$ 
	induced by all non-crossing edges
	of $G$
	is either triangular or quadrangular, 
	\item[(ii)] any quadrangular face
	of $H$ bounded by $v_0v_1v_2v_3$  contains the unique crossing point
created by the pair of crossing edges $v_0v_2$ and $v_1v_3$, 
	and 
\item[(iii)] no edge in $G$ is shared by two distinct triangular faces.
\end{enumerate} 

\begin{remark}\label{planargraph}
Y. Suzuki \cite{YS} proved that every near optimal 1-plane graph with $n$ vertices has at least $\frac{18}{5}(n-2)$ edges and every 5-connected maximal 1-plane graph $G$ is near optimal. This result directly implies that every 5-connected maximal 1-plane graph has at least $\frac{18}{5}(n-2)$ edges, and this lower bound is consistent with that of Theorem~\ref{main2} when $k=5$. 
\end{remark}

By the definition of a maximal 1-planar graph,  the following conclusion  is obvious.

\begin{corollary}\label{crossing1planar}
The lower bounds in Theorems~\ref{main1} and \ref{main2} also hold 
	if the condition that $G$ is a  
$k$-connected maximal $1$-plane graph
is replaced by that $G$ is a $k$-connected maximal 1-planar graph,
where $3\le k\le 7$.
\end{corollary}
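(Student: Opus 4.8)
The plan is to reduce the maximal $1$-planar case to the maximal $1$-plane case already settled in Theorems~\ref{main1} and~\ref{main2}, using two facts recorded in the introduction: a graph $G$ is maximal $1$-planar if and only if every $1$-planar drawing of $G$ is maximal, and vertex-connectivity is a property of the underlying graph, hence the same in every drawing. The crucial point is that both quantities we bound, $cr(G)$ and $|E(G)|$, are invariants of the underlying graph: if $D$ is any $1$-planar drawing of $G$, then, viewing $D$ as a $1$-plane graph, $cr(D)=cr(G)$ and $|E(D)|=|E(G)|$ by definition. So it suffices to exhibit a single $1$-planar drawing $D$ of $G$ lying in $\setg_k$ and then apply Theorems~\ref{main1} and~\ref{main2} to $D$.

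For $4\le k\le 7$ this is immediate. Let $G$ be a $k$-connected maximal $1$-planar graph and let $D$ be any $1$-planar drawing of $G$. By the equivalence above $D$ is maximal, and $D$ is $k$-connected because $G$ is; thus $D\in\setg_k$ (no immovability is required for $k\ge 4$). Theorems~\ref{main1} and~\ref{main2} applied to $D$ then give the stated lower bounds on $cr(D)=cr(G)$ and $|E(D)|=|E(G)|$.

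The case $k=3$ is the only delicate one, since membership in $\setg_3$ additionally demands immovability, a property a generic $1$-planar drawing need not enjoy. I would resolve it by choosing $D$ to be a $1$-planar drawing of $G$ minimizing the number of crossings among all $1$-planar drawings of $G$; such a $D$ exists because $G$ is $1$-planar and the crossing count is a bounded non-negative integer. This $D$ is immovable: any drawing obtained from $D$ by redrawing a single edge is again a $1$-planar drawing of the same graph and therefore has at least $cr_{\times}(D)$ crossings, which is precisely the immovability condition. Since $D$ is also maximal and $3$-connected, $D\in\setg_3$, and Theorems~\ref{main1} and~\ref{main2} transfer to $G$ exactly as before.

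The only real obstacle is this $k=3$ step, namely producing an immovable drawing, and the observation that a crossing-minimal $1$-planar drawing is automatically immovable disposes of it. One may additionally note, via Lemmas~\ref{lemtr-3} and~\ref{optimal}, that for such a $D$ the graph $D^\times$ is a triangulation, so $cr(G)=cr_{\times}(D)$; but this refinement is unnecessary, since Theorem~\ref{main1} already bounds the invariant $cr(D)=cr(G)$ directly.
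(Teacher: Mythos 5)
Your proof is correct and follows the route the paper intends: the paper offers no written proof, merely asserting the corollary is ``obvious'' from the fact that every $1$-planar drawing of a maximal $1$-planar graph is a maximal $1$-plane graph, which is exactly your reduction for $4\le k\le 7$. Your extra observation that a crossing-minimal $1$-planar drawing is automatically immovable cleanly disposes of the $k=3$ case, a subtlety the paper leaves implicit.
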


\section{Conclusion and open problems}
\label{sec:problem}

Recall that $XH^k$ and $YH^k$ constructed in Section~\ref{sec-exmg} are maximal 1-plane graphs for all $k\ge 1$. 
Generally speaking, determining whether a 1-plane graph is maximal is relatively straightforward. However, verifying the maximality of its underlying 1-planar graph poses significant challenges. Through a tedious verification, we can prove that $XH^1$ and $YH^1$ are maximal 1-plane graphs. Furthermore, we believe that the following is true.

\begin{conjecture}\label{conj2}
$XH^k$ and $YH^k$ are maximal 1-planar graphs for all $k\ge 1$.
\end{conjecture}

Let $\mathcal{G}$ be a family of graphs, and for any positive integer 
$n$, let $m(\mathcal{G}, n)$ 
denote the minimum
value of $|E(G)|$ over all maximal  graphs $G\in \setg$ 
with $|V(G)|=n$.
In \cite{DTY1},  Hud\'{a}k,  Madaras and Suzuki proved that for each rational number $\frac{p}{q}$ in the interval $[\frac{8}{3},4]$,  there exist infinitely many integers $n$ with the existence of  a 2-connected maximal 1-planar graph with $n$ vertices and $\frac{p}{q}(n-2)$ edges. Furthermore, they proposed the following conjecture.

\begin{conjecture}[\cite{DTY1}]\label{conj}
For the family $\overline{\mathcal{P}^\star}$ of 3-connected maximal 1-planar graphs,  $m(\overline{\mathcal{P}^\star},n)=\frac{18}{5}n+c$, where $c$ is a constant.
\end{conjecture}

By Corollary~\ref{crossing1planar}, the size of a 3-connected maximal 1-planar graph is at least $\frac{10}{3}n-\frac{20}{3}$.  
If Conjecture~\ref{conj2} holds, it indicates that there exist arbitrarily large 3-connected maximal 1-planar graphs that attain this lower bound of $\frac{10}{3}n-\frac{20}{3}$, thereby disproving Conjecture~\ref{conj}.

In this article, we obtain some 
partial results on 
the minimum crossing number
 and the minimum size
among all $k$-connected 
maximal $1$-plane graphs
of order $n$, where $3\le k\le 7$.
However,  the following problems 
on the minimum crossing number
and the minimum size of 
maximal $1$-plane graphs
remain open.

\begin{problem}\label{prob1} 
What is the minimum value of $cr(G)$ over all maximal $1$-plane graphs $G$ of order $n$ with connectivity 2?
\end{problem}

\begin{problem}\label{prob2} 
What is the minimum value of $cr(G)$ and $|E(G)|$
over all maximal $1$-plane graphs $G$ of order $n$ with connectivity 3?
\end{problem}

\begin{problem}\label{prob3} 
Does there exist a maximal $1$-plane graph $G$ of order $n$ with connectivity 5 such that $cr(G)=\frac{3}{5}(n-2)$ or  $|E(G)|=\frac{18}{5}(n-2)$?
\end{problem}

\end{document}